\numberwithin{equation}{section}
\numberwithin{figure}{section}
\theoremstyle{plain}
\newtheorem{thm}{\protect\theoremname}[section]
  \theoremstyle{plain}
  \newtheorem{cor}[thm]{\protect\corollaryname}
  \theoremstyle{plain}
  \newtheorem{lem}[thm]{\protect\lemmaname}
  \theoremstyle{plain}
  \newtheorem{prop}[thm]{\protect\propositionname}
  \theoremstyle{remark}
  \newtheorem{rem}[thm]{\protect\remarkname}
  \theoremstyle{definition}
  \newtheorem{example}[thm]{\protect\examplename}
\def\makebbb#1{
    \expandafter\gdef\csname#1\endcsname{
        \ensuremath{\Bbb{#1}}}
}\makebbb{R}\makebbb{N}\makebbb{Z}\makebbb{C}\makebbb{H}\makebbb{E}\makebbb{H}\makebbb{P}\makebbb{B}\makebbb{K}\makebbb{E}
  \providecommand{\corollaryname}{Corollary}
  \providecommand{\examplename}{Example}
  \providecommand{\lemmaname}{Lemma}
  \providecommand{\propositionname}{Proposition}
  \providecommand{\remarkname}{Remark}
\providecommand{\theoremname}{Theorem}
\begin{document}

\title{A thermodynamical formalism for Monge-Ampère equations, Moser-Trudinger
inequalities and Kähler-Einstein metrics }

\author{Robert J. Berman}

\email{robertb@chalmers.se}

\curraddr{Mathematical Sciences - Chalmers University of Technology and University
of Gothenburg - SE-412 96 Gothenburg, Sweden }

\keywords{Monge-Ampere equation, Kähler-Einstein manifolds, Variational methods
(MSC 2010: 32Q20, 32W20, 35A15)}
\begin{abstract}
We develop a variational calculus for a certain free energy functional
on the space of all probability measures on a Kähler manifold $X$.
This functional can be seen as a generalization of Mabuchi's $K-$energy
functional and its twisted versions to more singular situations. Applications
to Monge-Ampère equations of mean field type, twisted Kähler-Einstein
metrics and Moser-Trudinger type inequalities on Kähler manifolds
are given. Tian's $\alpha-$ invariant is generalized to singular
measures, allowing in particular a proof of the existence of Kähler-Einstein
metrics with positive Ricci curvature that are singular along a given
anti-canonical divisor (as conjectured very recently by Donaldson).
As another application we partially confirm a well-known conjecture
in Kähler geometry showing that if the Calabi flow in the (anti-)
canonical class exists for all times then it converges to a Kähler-Einstein
metric, when a unique one exists. 

\tableofcontents{} 
\end{abstract}
\maketitle

\section{Introduction}

One of the motivations for the present paper comes from the probabilistic
approach to Kähler-Einstein metrics very recently introduced in \cite{b1}.
In op. cit. the relations to physics were emphasized (Euclidean gravity
and fermion-boson correspondences) and a heuristic argument was given
for the convergence of the statistical mechanics model in the thermodynamical
limit. One of the aims of the present paper, which can be seen as
the first part in a forthcoming series, is to develop the \emph{variational
calculus }needed for a rigorous investigation of the thermodynamical
limit referred to above. However the main results to be proved also
have an independent interest in Kähler-Einstein geometry (notably
to the convergence of the Calabi flow and a conjecture of Donaldson
concerning Kähler-Einstein metrics on Fano manifolds which are singular
along a divisor) and more generally in the context of complex Monge-Ampère
equations and Moser-Trudinger type inequalities.

This work can also be seen as a development of the variational approach
to complex Monge-Ampère equations recently introduced in \cite{bbgz}.
The main role will be played by a certain functional $F(\mu)$ on
the space of all probability measures on a Kähler manifold that in
the thermodynamical limit, referred to above, arises as the limiting
\emph{free energy functional.} We will be particularly interested
in the \emph{optimizers} of $F(\mu)$ (as explained in \cite{b1}
they determine the limiting equilibrium measures of the statistical
mechanical model).

Using Legendre transforms the functional $F(\mu)$ will be related
to the another functional $\mathcal{G}(u)$ on the space of all singular
Kähler potentials (i.e. $\omega-$psh functions), which played a leading
role in \cite{bbgz}. As is will turn out the free energy functional
$F(\mu)$ can, in the particular case when the Kähler class is proportional
to the canonical class, be identified with \emph{Mabuchi's K-energy
functional, }which plays a leading role in Kähler-Einstein geometry.
As for the functional $\mathcal{G}(u)$ it generalizes a functional
introduced by Ding \cite{din} in Kähler-Einstein geometry that we
following \cite{rub1} will refer to as the \emph{Ding functional.}

From the point of view of Kähler geometry the main conceptual contribution
of the present paper is to introduce a thermodynamical formalism for
Kähler-Einstein geometry, which in mathematical terms amounts to a
systematic use of convexity and Legendre transform arguments and to
- which is closely related - develop a variational calculus for the
Mabuchi functional which demands a minimum of regularity assumptions,
namely finite (pluricomplex) energy and finite entropy.

\subsection{General setup}

Let $(X,\omega)$ be an $n-$dimensional compact complex manifold
with Kähler form $\omega$ and fix a probability measure $\mu_{0}$
on $X$ and non-zero real parameter $\beta$ (which plays the role
of the inverse temperature in the statistical mechanical setup in
\cite{b1}). To the triple $(\omega,\mu_{0},\beta)$ we will associate
a Monge-Ampère equation, as well as two functionals. Before continuing
it should be emphasized that only the Kähler\emph{ class} $[\omega]\in H^{1,1}(X)$
defined by the fixed Kähler form $\omega$ will be relevant and one
may as well fix any other smooth and, possibly non-positive, representative
$\omega'\in[\omega].$ We let $d^{c}:=i(-\partial+\overline{\partial})/4\pi,$
so that $dd^{c}=\frac{i}{2\pi}\partial\overline{\partial}$ and denote
by $V$ the volume of $(X,\omega),$ i.e. $V=\int_{X}\omega^{n}/n!$
which by Stokes theorem is an invariant of the class $[\omega].$

\subsubsection*{The Monge-Ampère mean field equation }

This is the following equation 
\begin{equation}
\frac{(\omega+dd^{c}u)^{n}}{Vn!}=\frac{e^{\beta u}\mu_{0}}{\int_{X}e^{\beta u}\mu_{0}}\label{eq:me intro}
\end{equation}
 for an $\omega-$psh function $u$ on $X,$ i.e. 
\begin{equation}
\omega_{u}:=\omega+dd^{c}u\geq0\label{eq:omega u pos intro}
\end{equation}
 in the sense of currents. The integral in the equation has been inserted
to ensure invariance under the additive action of $\R$ (removing
gives an equivalent equation) and hence the equation descends to the
space of all positive currents in the class $[\omega].$

The equation above generalizes the mean field equations extensively
studied on a Riemann surface, i.e the case when $n=1$ (see the book
\cite{ta} and references therein). It should be interpreted in the
weak sense of pluripotential theory as recalled in section \ref{sec:Functionals-on-the}.
More precisely, we will assume that the fixed measure $\mu_{0}$ has
finite energy and we will look for finite energy solutions. These
energy notions will be recalled in section \ref{sec:Functionals-on-the}.
One of the main cases that we will be interested in is when $\mu_{0}$
is a volume form and then we will simply look for\emph{ smooth} solutions
of the equation \ref{eq:me intro} satisfying 
\[
\omega_{u}:=\omega+dd^{c}u>0,
\]
 which means that $u$ is a \emph{Kähler potential} for the Kähler
metric $\omega_{u}$ in the cohomology class $[\omega].$ A Interestingly,
the equation \ref{eq:me intro} also has a natural interpretations
for $\beta=0,$ as well as $\beta=\infty.$ Indeed, for $\beta=0$
this is clearly the \emph{inhomogeneous Monge-Ampère equation }and
for $\beta=\infty$ it may be interpreted as a \emph{free boundary
value problem} for the Monge-Ampère equation (see Theorem \ref{thm:free bound}).

\subsection{The (twisted) Kähler-Einstein setting }

The case of main interest in Kähler geometry arises when the class
$[\omega]$ is a non-zero multiple of the\emph{ canonical class},
i.e. the first Chern class of the canonical line bundle $K_{X}:=\Lambda^{n}(TX^{*}):$
\[
[\omega]=\beta c_{1}(K_{X})
\]
 (after scaling we may and will assume that $\beta=\pm1)$ and when
the fixed Kähler form $\omega$ and measure $\mu_{0}$ are related
by 
\[
\mu_{0}=e^{-h_{\omega}}\omega^{n}/V
\]
 for the Ricci potential $h_{\omega}$ of the fixed Kähler metric
$\omega.$ Then the equation \ref{eq:me intro} is equivalent to the
\emph{Kähler-Einstein equation} 
\[
\mbox{Ric}\omega=-\beta\omega
\]
 where $\mbox{Ric}\omega$ denotes the Ricci form defined by the Ricci
curvature of the Riemannian metric determined by $\omega.$ By the
seminal results of Aubin \cite{au} and Yau \cite{y} such a Kähler-Einstein
metric always exists in the case when $\beta\geq0.$ But it is well-known
that there are obstructions to the existence of Kähler-Einstein metrics
in the case when $\beta<0,$ i.e. when $X$ is a Fano manifold. The
Yau-Tian-Donaldson conjeceture (see \cite{do,ti} and references therein)
formulates these obstructions in terms of an algebro-geometric notion
of \emph{stability} (in the sense of Geometric Invariant Theory).
Even though there has been tremendous progress on this conjecture,
which was settled on complex surfaces by Tian \cite{t-y}, it is still
open in dimension $n\geq3.$ However, as shown by Tian (see \cite{ti})
there is a stronger \emph{analytic }notion of stability which is equivalent
to the existence of a Kähler-Einstein metric in the class $\beta c_{1}(K_{X}),$
namely the \emph{properness} of \emph{Mabuchi's K-energy functional
}$\mathcal{K}$ (which in this case turns out to be equivalent to
the\emph{ coercivity} of the functional \cite{p-s+}).\emph{ }The
functional $\mathcal{K}$ is defined on the space of all Kähler metrics
in $\beta c_{1}(K_{X})$ and its critical points are precisely the
Kähler-Einstein metrics. In the case of a general class $[\omega]$
and volume form $\mu_{0}$ the equation \ref{eq:me intro} is equivalent
to a\emph{ twisted} Kähler-Einstein equation (see section \ref{sub:The-(twisted)-K=00003D0000E4hler-Einstein})
obtained by replacing $\mbox{Ric}\omega$ with $\mbox{Ric}\omega-\theta$
for a given closed real $(1,1)-$form $\theta.$

\subsection{Monge-Ampère mean field equations and Moser-Trudinger type inequalities:
General results }

The free energy functional $F_{\beta}$ of a probability measure $\mu$
of finite (pluricomplex) energy is defined as 
\begin{equation}
F_{\beta}=E_{\omega}+\frac{1}{\beta}D_{\mu_{0}}\label{eq:def of f intro}
\end{equation}
where $E_{\omega}(\mu)$ is the (pluricomplex)\emph{ energy }of the
probability measure $\mu$ introduced in \cite{bbgz} and $D_{\mu_{0}}$
is its \emph{entropy} relative to $\mu,$ which in the regular case
means that $D_{\mu_{0}}(\mu):=\int_{X}\log(\frac{\mu}{\mu_{0}})\mu.$
We will start by relating properties of the free energy functional
$F_{\beta}$ to another funtional functional $\mathcal{G}_{\beta}(u)$
defined on the space of all $\omega-$psh functions with finite energy
$\mathcal{E}^{1}(X,\omega).$ We refer to section \ref{sec:Monge-Amp=00003D0000E8re-mean-field}
for precise definitions. For the moment we just point out that the
functionals $F_{\beta}$ and $\mathcal{G}_{\beta}$ have an independent
analytical interest when $\beta<0.$ For example, on a Riemann surface
their boundedness from above is equivalent to a \emph{logarithmic
Hardy-Sobolev inequality} and \emph{Moser-Trudinger inequality} ,
respectively (which in turn imply various limiting Sobolev inequalities)\emph{
}\cite{Bec,c-l}.

In the analytically most challenging case when $\beta<0$ the main
properties that will be obtained are summarized in the following Theorem
(see section \ref{sub:Properness-and-coercivity} for the definition
of properness and coercivity in this context). 
\begin{thm}
\label{thm:duality intro}For any given measure $\mu_{0}$ of finite
energy and number $\beta<0$ we have 
\begin{equation}
\sup_{\mu\in E_{1}(X)}F_{\beta}=\sup_{u\in\mathcal{E}^{1}(X,\omega)}\mathcal{G}_{\beta}\label{eq:same optimal values intro}
\end{equation}
 and 
\begin{equation}
F_{\beta}(\frac{(\omega+dd^{c}u)^{n}}{Vn!})\leq\mathcal{G}_{\beta}(u)\label{eq:f smaller than g intro}
\end{equation}
 for any $u\in\mathcal{E}^{1}(X,\omega)$ with equality iff $u$ is
a solution to the  equation \ref{eq:me intro}. Moreover, the functional
$F_{\beta}$ is coercive iff $\mathcal{G}_{\beta}$ is. 
\end{thm}
In the Kähler-Einstein setting and when $u$ is assumed to be a Kähler
potential - so that $\beta F_{\beta}$ may be identified with Mabuchi's
K-energy functional and $\beta G_{\beta}(u)$ is the Ding functional
- the content of the previous theorem was previously known. Indeed,
the equality \ref{eq:same optimal values intro} was established by
Li \cite{li}, who used the Kähler-Ricci flow and Perelman's deep
estimates and by Rubinstein \cite{rub0,rub1}, using the Ricci iteration.
As for the inequality \ref{eq:f smaller than g intro} it follows
from identities of Bando-Mabuchi \cite{b-m}, while the coercivity
statement only has a rather involved and indirect proof (see section
\ref{sub:Intermezzo:-properness-vs} for further discussion and references).
The present proof uses a simple Legendre duality argument and has
the virtue of being valid in the general singular setting.

Combining the properties \ref{eq:same optimal values intro} and \ref{eq:f smaller than g intro}
above with the variational approach introduced in \cite{begz} is
the key to the proof of the following general existence and convergence
result. 
\begin{thm}
\label{thm:main intro}Let $(X,\omega)$ be a compact Kähler manifold
and let $\mu_{0}$ be a probability measure on $X$ of finite energy. 
\begin{itemize}
\item When $\beta>0$ the   functional $F_{\beta}(\mu)$ admits a unique
\emph{minimizer }$\mu$ on the space $E_{1}(X,\omega)$ of finite
energy probability measures on $X.$ Its potential $u_{\mu}$ is the
unique solution (mod $\R)$ of the  equation \ref{eq:me intro}. 
\item When $\beta<0$ and the   functional $F_{\beta}(\mu)$ is assumed
bounded from above on the space $E_{1}(X,\omega)$ any maximizer $\mu$
(it it exists) has a potential $u_{\mu}$ solving the  equation \ref{eq:me intro}.
Moreover, under the stronger assumption that $F_{\beta-\delta}$ be
bounded from above for some $\delta>0$ (or equivalently, if $-F_{\beta}$
is coercive with respect to energy) a maximizer does exist. 
\end{itemize}
More generally, if the   functional $F_{\beta}$ is coercive on $E_{1}(X,\omega)$
with respect to energy, then any sequence $\mu_{j}$ in $E_{1}(X,\omega)$
such that $F(\mu_{j})$ converges to the minimal value of $\beta F$
converges (perhaps after passing to a subsequence if $\beta<0)$ to
an minimizer $\mu.$ In the case when $\mu_{0}=fdV$ for a volume
form $dV$ on $X$ and $f\in L^{p}(X,dV)$ for some $p>1$ the assumptions
about coercivity above may be replaced by properness.
\end{thm}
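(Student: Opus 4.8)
The plan is to run the direct method of the calculus of variations, treating the convex regime $\beta>0$ on the space of measures $E_1(X,\omega)$ and the analytically harder regime $\beta<0$ on the space of potentials $\mathcal{E}^1(X,\omega)$ via the auxiliary functional $\mathcal{G}_\beta$, and to extract the Euler--Lagrange equation \eqref{eq:me intro} from the differentiability properties recorded in Section~\ref{sec:Functionals-on-the} together with the variational machinery of \cite{begz}.

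\emph{The case $\beta>0$.} Since $\omega\geq0$ one has $E_\omega\geq0$ and $\tfrac1\beta D_{\mu_0}\geq0$, so $F_\beta=E_\omega+\tfrac1\beta D_{\mu_0}\geq E_\omega$ is bounded below and automatically coercive with respect to energy; moreover $F_\beta$ is convex, being the sum of the convex functional $E_\omega$ (\cite{bbgz}) and $\tfrac1\beta$ times the relative entropy, which is strictly convex where it is finite. I would take a minimizing sequence $\mu_j$, noting $F_\beta(\mu_0)=E_\omega(\mu_0)<\infty$ so the infimum is finite: coercivity bounds $E_\omega(\mu_j)$, compactness of the energy sublevel sets (Section~\ref{sec:Functionals-on-the}) yields a weak limit $\mu\in E_1(X,\omega)$, and lower semicontinuity of $E_\omega$ and $D_{\mu_0}$ forces $\mu$ to be a minimizer. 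Then $D_{\mu_0}(\mu)\leq\beta F_\beta(\mu)<\infty$, so strict convexity of $D_{\mu_0}$ gives uniqueness of the minimizer. Differentiating $F_\beta$ along paths of measures (legitimate by Section~\ref{sec:Functionals-on-the}) shows the potential $u_\mu$ of $\mu$ solves \eqref{eq:me intro}; conversely any solution of \eqref{eq:me intro} is a critical point of the convex functional $F_\beta$, hence equals the minimizer mod $\R$, which yields uniqueness of the solution mod $\R$.

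\emph{The case $\beta<0$.} I would first record the elementary identity $F_\beta=(1-t)E_\omega+tF_{t\beta}$, valid for every $t>0$; applied with $t=1-\delta/\beta>1$ and combined with $E_\omega\geq0$ it gives $F_\beta\leq-(t-1)E_\omega+tF_{\beta-\delta}$, so that $F_{\beta-\delta}$ bounded above implies $-F_\beta$ coercive with respect to energy (the converse being a similar computation), which by the preceding theorem is equivalent to coercivity of $\mathcal{G}_\beta$ on $\mathcal{E}^1(X,\omega)$ with respect to $\mathcal{E}_\omega$. Under this hypothesis I take a maximizing sequence of sup-normalized potentials $u_j$ for $\mathcal{G}_\beta$: coercivity bounds $\mathcal{E}_\omega(u_j)$ from below, the set $\{u\ \omega\text{-psh}:\ \sup_Xu=0,\ \mathcal{E}_\omega(u)\geq-C\}$ is $L^1$-compact (a closed subset, by upper semicontinuity of $\mathcal{E}_\omega$, of the compact family of sup-normalized $\omega$-psh functions), so along a subsequence $u_j\to u_\infty\in\mathcal{E}^1(X,\omega)$; since $\mathcal{E}_\omega$ is upper semicontinuous and $u\mapsto\log\int_Xe^{\beta u}\mu_0$ is continuous on energy-bounded sets -- here the finite energy of $\mu_0$ is essential -- the functional $\mathcal{G}_\beta$ is upper semicontinuous along the sequence and $u_\infty$ is a maximizer. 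That $u_\infty$ satisfies \eqref{eq:me intro} is the Euler--Lagrange equation of $\mathcal{G}_\beta$ at a maximizer, obtained via the envelope/orthogonality argument of \cite{begz}, which legitimizes differentiating $\mathcal{E}_\omega$ along projected perturbations without a priori regularity. Then $\mu:=(\omega+dd^{c}u_\infty)^n/(Vn!)$ lies in $E_1(X,\omega)$ and, by the equality case of \eqref{eq:f smaller than g intro}, $F_\beta(\mu)=\mathcal{G}_\beta(u_\infty)=\sup\mathcal{G}_\beta=\sup F_\beta$, so $\mu$ is a maximizer of $F_\beta$; conversely, for any maximizer $\mu$ of $F_\beta$ its potential $u_\mu\in\mathcal{E}^1(X,\omega)$ exists by \cite{bbgz}, and the chain $F_\beta(\mu)\leq\mathcal{G}_\beta(u_\mu)\leq\sup\mathcal{G}_\beta=\sup F_\beta=F_\beta(\mu)$ forces equality in \eqref{eq:f smaller than g intro}, whence $u_\mu$ solves \eqref{eq:me intro}.

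\emph{Convergence, the $L^p$ variant, and the main obstacle.} If $F_\beta$ is coercive with respect to energy, any sequence $\mu_j$ with $F_\beta(\mu_j)$ tending to the optimal value has $E_\omega(\mu_j)$ bounded, hence converges weakly (after a subsequence) to some $\mu$, and the relevant semicontinuity -- lower for $\beta>0$, upper for $\beta<0$ as above -- forces $\mu$ to be an optimizer; when $\beta>0$ the minimizer is unique, so the full sequence converges. When $\mu_0=fdV$ with $f\in L^p$, $p>1$, solutions of \eqref{eq:me intro} have continuous potentials by the $L^\infty$-estimate for Monge--Amp\`{e}re equations with $L^p$ density, and on the resulting space of bounded potentials properness and coercivity with respect to energy coincide (Section~\ref{sub:Intermezzo:-properness-vs}), so substituting this equivalence into the above arguments yields the stated variant. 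I expect the main obstacle to be the case $\beta<0$: simultaneously establishing the upper semicontinuity of $\mathcal{G}_\beta$ on energy-bounded sets -- that is, the continuity of the exponential term against the singular measure $\mu_0$, which is precisely where the finite-energy hypothesis on $\mu_0$ enters -- and deriving the Euler--Lagrange equation \eqref{eq:me intro} at a maximizer known only to belong to $\mathcal{E}^1(X,\omega)$. By contrast $\beta>0$ is a routine application of the direct method to a strictly convex, coercive, lower semicontinuous functional.
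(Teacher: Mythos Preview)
Your overall architecture matches the paper's: the $\beta>0$ case via strict convexity and lower semicontinuity, the $\beta<0$ case by passing to $\mathcal{G}_\beta$ on potentials, compactness on sup-normalized energy sublevels, the projection trick of \cite{bbgz} for the Euler--Lagrange equation, and the transfer back to $F_\beta$ through Theorem~\ref{thm:f and g when beta neg}. Two points need correcting.

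The continuity of $u\mapsto\int_X e^{\beta u}\mu_0$ along the maximizing sequence when $\beta=-\gamma<0$ does not come from ``finite energy of $\mu_0$'' on energy-bounded sets. What the paper uses (Lemma~\ref{lem:kont of exp integral with bd}) is a uniform bound $\int e^{-(\gamma+\delta)u_j}\mu_0\leq C$, and this bound is produced by the \emph{coercivity} hypothesis: boundedness of $F_{-(\gamma+\delta)}$ gives boundedness of $\mathcal{G}_{-(\gamma+\delta)}$ (Theorem~\ref{thm:f and g when beta neg}), which combined with $J(u_j)\leq C$ yields the exponential bound. The finite-energy assumption on $\mu_0$ enters only in that $\mu_0$ does not charge pluripolar sets, which is a hypothesis of the lemma. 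You flag this step as the main obstacle, and it is, but the mechanism is not the one you describe.

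The genuine gap is your treatment of the $L^p$ variant. Section~\ref{sub:Intermezzo:-properness-vs} does not say properness and coercivity coincide on bounded potentials; Corollary~\ref{cor:coerc} says $F_\beta$ is coercive iff $\mathcal{G}_\beta$ is, which is a different statement. Appealing to $L^\infty$ bounds on the \emph{solution} is also beside the point, since the issue is controlling the maximizing \emph{sequence}. The paper's argument is: when $\mu_0=f\,dV$ with $f\in L^p$, properness bounds $J(u_j)$ along the sequence, and then Zeriahi's uniform Skoda theorem \cite{ze} gives $\int e^{-t(u_j-\sup u_j)}dV\leq C_t$ for \emph{every} $t>0$ on energy sublevels; H\"older's inequality converts this into the needed bound $\int e^{-(\gamma+\delta)u_j}\mu_0\leq C$, and one is back in the situation of Lemma~\ref{lem:kont of exp integral with bd}. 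This ``qualitative Moser--Trudinger'' property \eqref{eq:qual m-t ineq for measure} is what substitutes for coercivity; there is no properness-equals-coercivity shortcut.

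A minor remark on $\beta>0$: the paper avoids differentiating $F_\beta$ directly at the minimizer (Proposition~\ref{pro:deriv of D} requires $\int\log(\mu/\mu_0)\,\nu$ finite, which is not clear a priori). Instead it first maximizes $\mathcal{G}_\beta$ via the projection trick to obtain a solution $u_{ME}$, and only then shows $MA(u_{ME})$ minimizes $F_\beta$, at which point $\log(\mu/\mu_0)=\beta u_{ME}$ is known to be a finite-energy potential and the subdifferential inequality applies.
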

In the case when $\mu_{0}$ is a volume form the weak solutions of
the equation \ref{eq:me intro} produced above are automaticaly \emph{smooth}
as follows from \cite{ko,sz-to}. The existence of solutions to \ref{eq:me intro}
for $\beta=0$ was shown in \cite{g-z2}, building on \cite{y} (see
also \cite{begz}). As pointed out above, in the Kähler-Einstein setting
the existence result in the previous theorem was shown by Aubin and
Yau in the case when $\beta>0$ and by Tian in the case when $\beta<0.$
The usual existence proofs are based on the continuity method (compare
Remark \ref{rem:nadel seq}). However, in the general situation when
$\beta<0$ it does not seem possible (even when $\mu_{0}$ is a volume
form) to use a continuity method as there is no general uniqueness
result for the solutions (even modulo biholomorphisms), nor for the
solutions of the linearized equations and hence the crucial openness
property in the continuity method is missing in general.

To obtain natural situations where the coercivity assumption in the
previous theorem is satisfied ( for $\beta<0)$ we generalize Tian's
alpha-invariant of a Kähler class \cite{ti1} to an invariant $\alpha$
of a pair $([\omega],\mu_{0})$ leading to the following sufficent
criterion for coercivity of $F_{\beta}$ (see Theorem \ref{thm:alpha invariant intro}):
\begin{equation}
-\beta<\alpha(n+1)/n\label{eq:crit on alpha inv}
\end{equation}
This gives among other things a Moser-Trudinger type inequality for
Frostman measures on Riemann surfaces (see Cor \ref{cor:m-t on rieman sur intro}).

\subsection{Applications to the (twisted) Kähler-Einstein setting }

In the Kähler-Einstein setting the functional 
\begin{equation}
\mathcal{K}(u):=\beta F_{\beta}(\frac{\omega_{u}{}^{n}}{Vn!})\label{eq:mab f intro}
\end{equation}
 on the space $\mathcal{H}(X,\omega)$ of Kähler potentials for $[\omega]=\beta c_{1}(K_{X})$
will be shown to coincide with \emph{Mabuchi's K-energy functional}
\cite{m2} (see section \ref{sub:The-free-energy}). From Theorem
\ref{thm:main intro} we then deduce the first point in the following
Corollary (see Theorem \ref{cor:alpha inv for twisted} for the second
point): 
\begin{cor}
\label{cor:min of mab intro}Let $u$ be an $\omega-$psh function
with finite energy, i.e. $u\in\mathcal{E}^{1}(X,\omega).$ Then
\begin{itemize}
\item $u$ minimizes (with a finite minimum) the generalized Mabuchi functional
$\mathcal{K}$ iff $\omega_{u}$ is a Kähler-Einstein metric (and
in particular smooth and non-degenerate).
\item If $X$ is a Fano manifold with no non-trivial holomorphic vector
fields (i.e. $H^{0}(TX)=\{0\})$ and $u_{j}$ is a normalized minimizing
sequence for $\mathcal{K},$ i.e. $\sup_{X}u_{j}=0$ and 
\[
\mathcal{K}(u_{j})\rightarrow\inf_{\mathcal{H}(X,\omega)}\mathcal{K},
\]
 then precisely one of the following alternatives holds: (1) either
$X$ admits a Kähler-Einstein metric $\omega_{KE}$ and then $\omega_{u_{j}}$
converges weakly to $\omega_{KE}$ or (2) $u_{j}$ subconverges to
$u_{\infty}$ defining a Nadel multiplier ideal sheaf on $X,$ i.e.
$\int_{X}e^{-u_{\infty}t}dV=\infty$ for any $t>n/(n+1).$
\end{itemize}
\end{cor}
The first point above generalizes a recent result of Chen-Tian-Zhou
\cite{c-t-z}, saying that any maximizer $u$ such that $\omega_{u}$
has locally bounded coefficients is necessarily smooth and Kähler-Einstein.
It should be pointed out that the minimal assumption of finite energy
of the maximizer $u$ in the assumptions in Corollary \ref{cor:min of mab intro}
is crucial as there seems to be no known way of controlling the a
priori regularity of a general maximizer. In particular, this will
allow us to apply the previous corollary to the Calabi flow below.
As for the second point it can bee seen as a generalization of Nadel's
result concerning the continuity method \cite{na} (see Remark \ref{rem:nadel seq}).

\subsection{Application to The Calabi flow}

The Calabi flow\emph{ }\cite{ca} is the following flow of Kähler
metrics: 
\[
\frac{\partial\omega_{t}}{\partial t}=dd^{c}R_{\omega_{t}}
\]
 where $R_{\omega_{t}}$ is the scalar curvature of the Kähler metric
$\omega_{t},$ which is a highly non-linear $4$th order parabolic
PDE. It has been conjectured that the flow exists for all times and
it is expected to converge to a constant scalar curvature metric in
$[\omega]$ when such a metric exists \cite{ca,do}. In this direction
we will prove the following 
\begin{thm}
\label{thm:conv of cal intro}Let $[\omega]$ be a Kähler class such
that $[\omega]=\beta c_{1}(K_{X})$ for $\beta\neq0.$ In case $\beta<0$
we assume that $X$ admits a Kähler-Einstein metric $\omega_{KE}$
and that $H^{0}(TX)=\{0\}.$ If the Calabi flow $\omega_{t}$ exists
for all times $t\geq0,$ then it converges weakly to the Kähler-Einstein
metric, i.e. 
\[
\omega_{t}\rightarrow\omega_{KE},
\]
 as $t\rightarrow\infty$ holds in the weak topology of currents. 
\end{thm}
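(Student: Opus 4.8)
The plan is to run the classical energy monotonicity of the Calabi flow against the characterisation of minimizers in Corollary~\ref{cor:min of mab intro} and the compactness statement in Theorem~\ref{thm:main intro}. Write $\mathcal{K}(u):=\beta F_{\beta}(\omega_{u}^{n}/(Vn!))$ for the (generalised) Mabuchi functional, which on $\mathcal{H}(X,\omega)$ coincides with the classical $K$-energy, so that along the Calabi flow the classical identity
\[
\frac{d}{dt}\mathcal{K}(\omega_{t})=-\frac{1}{Vn!}\int_{X}(R_{\omega_{t}}-\bar R)^{2}\,\omega_{t}^{n}=:-\,\mathrm{Ca}(\omega_{t})\leq 0
\]
holds, where $\bar R$ is the (topologically determined) mean scalar curvature. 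Hence $t\mapsto\mathcal{K}(\omega_{t})$ is non-increasing; by Corollary~\ref{cor:min of mab intro} it is bounded below by $\mathcal{K}(\omega_{KE})=\inf_{\mathcal{E}^{1}}\mathcal{K}>-\infty$, so it converges to some $\ell\geq\inf\mathcal{K}$ and $\int_{0}^{\infty}\mathrm{Ca}(\omega_{t})\,dt=\mathcal{K}(\omega_{0})-\ell<\infty$. Since the Calabi energy $\mathrm{Ca}(\omega_{t})$ is itself non-increasing along the Calabi flow, this forces $\mathrm{Ca}(\omega_{t})\to 0$ as $t\to\infty$.

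Next I would record the coercivity of $\mathcal{K}$. For $\beta>0$ this is trivial (cf.\ Theorem~\ref{thm:main intro}); for $\beta<0$ the existence of a K\"ahler-Einstein metric together with $H^{0}(TX)=\{0\}$ forces coercivity of $\mathcal{K}$ with respect to energy --- a known fact, equivalent, via the first theorem above, to coercivity of $\mathcal{G}_{\beta}$. Normalising the flow potentials by $\sup_{X}u_{t}=0$ (which does not change $\omega_{t}$), coercivity and the bound $\mathcal{K}(\omega_{t})\leq\mathcal{K}(\omega_{0})$ give a uniform bound on the energies $\mathcal{E}_{\omega}(u_{t})$; hence $\{u_{t}\}_{t\geq0}$ lies in a fixed bounded, and therefore $L^{1}$-compact, subset of $\mathcal{E}^{1}(X,\omega)$.

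The decisive step --- and the one I expect to be the main obstacle --- is to upgrade $\mathcal{K}(\omega_{t})\to\ell$ to $\ell=\inf\mathcal{K}$. Given any sequence $t_{j}\to\infty$, pass to a subsequence with $u_{t_{j}}\to u_{\infty}$ in $L^{1}(X)$, $u_{\infty}\in\mathcal{E}^{1}(X,\omega)$, $\omega_{t_{j}}\to\omega_{u_{\infty}}$ weakly; since $\mathrm{Ca}(\omega_{t_{j}})\to0$ one wants to conclude that $\omega_{u_{\infty}}$ is K\"ahler-Einstein, hence (Corollary~\ref{cor:min of mab intro}) a minimizer, so that $\ell=\mathcal{K}(\omega_{u_{\infty}})=\inf\mathcal{K}$. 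In the (anti-)canonical setting $R_{\omega_{u}}-\bar R$ is, up to a positive constant, the $\omega_{u}$-Laplacian of the normalised Ricci potential of $\omega_{u}$, and $\mathrm{Ca}(\omega_{t_{j}})\to0$ morally forces this Ricci potential to be constant in the limit, i.e.\ $\mathrm{Ric}\,\omega_{u_{\infty}}=-\beta\,\omega_{u_{\infty}}$, i.e.\ $u_{\infty}$ solves the mean field equation~\ref{eq:me intro}; equivalently, one passes to the limit in the first-variation relation $\int_{X}v\,(R_{\omega_{t_{j}}}-\bar R)\,\omega_{t_{j}}^{n}\to0$ (valid for all test potentials $v$ by Cauchy--Schwarz) to see that $u_{\infty}$ is a finite energy critical point of $\mathcal{K}$, hence a minimizer by Corollary~\ref{cor:min of mab intro}. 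The difficulty is to make this limiting argument rigorous with \emph{only} finite energy control on $u_{t_{j}}$ --- no a priori $C^{0}$ or curvature bounds along the flow are available --- so one needs a bit more integrability, e.g.\ a bound on $\int_{X}|u_{t_{j}}|^{2}\,\omega_{t_{j}}^{n}$ coming from a coercivity estimate that also controls the entropy, or from $L^{p}$-regularity of $\mu_{0}$ when $\mu_{0}=f\,dV$; alternatively one invokes convexity of $\mathcal{K}$ along weak geodesics to turn the vanishing slope directly into $\mathcal{K}(\omega_{t_{j}})-\inf\mathcal{K}\to0$.

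With $\mathcal{K}(\omega_{t})\to\inf\mathcal{K}$ in hand the proof closes using Theorem~\ref{thm:main intro}. Indeed $\mathcal{K}=\beta F_{\beta}$, so $F_{\beta}(\mu_{t})$, with $\mu_{t}:=\omega_{t}^{n}/(Vn!)$, tends to the optimal value of $F_{\beta}$ on $E_{1}(X,\omega)$; as $\mathcal{K}$ is coercive with respect to energy, Theorem~\ref{thm:main intro} gives that $\mu_{t}$ converges (for $\beta<0$ a priori along a subsequence, but the optimizer --- a solution of \ref{eq:me intro}, hence the \emph{unique} K\"ahler-Einstein measure --- is unique, so no subsequence is needed) to $\mu_{KE}=\omega_{KE}^{n}/(Vn!)$ in the energy topology of $E_{1}(X,\omega)$. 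Through the homeomorphism $u\mapsto\omega_{u}^{n}/(Vn!)$ between $\mathcal{E}^{1}(X,\omega)$ and $E_{1}(X,\omega)$ this means $u_{t}\to u_{KE}$, whence $\omega_{t}=\omega+dd^{c}u_{t}\to\omega_{KE}$ in the weak topology of currents, as asserted. Everything apart from the third paragraph is soft once the monotonicity of $\mathcal{K}$ and $\mathrm{Ca}$ along the Calabi flow, Corollary~\ref{cor:min of mab intro} and Theorem~\ref{thm:main intro} are available.
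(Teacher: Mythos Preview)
Your outline is largely on target, but the ``decisive step'' in your third paragraph is indeed a genuine gap, and the paper closes it by a different (and much cleaner) route than the ones you sketch.

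You propose to take an $L^{1}$-limit $u_{\infty}$ of a subsequence $u_{t_{j}}$ and to deduce from $\mathrm{Ca}(\omega_{t_{j}})\to 0$ that $\omega_{u_{\infty}}$ is K\"ahler--Einstein by passing to the limit in the first variation. As you correctly anticipate, this does not work with only finite energy control: neither $R_{\omega_{t_{j}}}$ nor $\omega_{t_{j}}^{n}$ is under any control beyond weak convergence, so the pairing $\int v\,(R_{\omega_{t_{j}}}-\bar R)\,\omega_{t_{j}}^{n}$ need not pass to the limit, and there is no reason a priori that a weak limit of cscK-approximating metrics is cscK. Your vague ``alternatively one invokes convexity of $\mathcal{K}$ along weak geodesics'' is pointing at the right idea but is not yet an argument.

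What the paper actually does is to invoke two external results that make this step essentially a one-liner. The first is Chen's inequality
\[
\mathcal{K}(u)-\mathcal{K}(v)\leq d(u,v)\,\mathrm{Ca}(u)^{1/2},
\]
where $d$ is the Mabuchi distance; this is a quantitative form of the geodesic convexity of $\mathcal{K}$ (proved using Chen's $C^{1,1}$-geodesics). The second is the Calabi--Chen theorem that the Mabuchi distance is non-increasing under the Calabi flow. Taking $v=u_{KE}$ (a fixed point of the flow) gives
\[
\mathcal{K}(u_{t})-\mathcal{K}(u_{KE})\leq d(u_{t},u_{KE})\,\mathrm{Ca}(u_{t})^{1/2}\leq d(u_{0},u_{KE})\,\mathrm{Ca}(u_{t})^{1/2}.
\]
Now one only needs $\mathrm{Ca}(u_{t_{j}})\to 0$ along \emph{some} sequence, which follows from $\int_{0}^{\infty}\mathrm{Ca}(u_{t})\,dt<\infty$ (the paper does not use that $\mathrm{Ca}$ is monotone, only this integrability). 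Since $\mathcal{K}(u_{t})$ is itself non-increasing, the subsequential conclusion $\mathcal{K}(u_{t_{j}})\to\inf\mathcal{K}$ upgrades to the full limit. From there your final paragraph is correct: Corollary~\ref{cor:min of mab intro} (or Theorem~\ref{thm:main intro}) plus uniqueness of $\omega_{KE}$ gives $\omega_{t}\to\omega_{KE}$ weakly, with no subsequence needed.

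In short: replace your attempted weak-limit argument by the Chen inequality together with the Calabi--Chen distance-decreasing property; everything else in your outline is fine.
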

The existence and convergence of the Calabi flow on a Riemann surface
was shown by Chrusciel \cite{chr}. In the general higher dimensional
case almost all results are conditional. It was proved by Chen-He
\cite{c-h} that the Calabi flow exists as long as the Ricci curvature
stays uniformly bounded. Moreover, they obtained the convergence towards
an extremal metric (which in the case $[\omega]=\beta c_{1}(K_{X})$
is the Kähler-Einstein metric) under the extra assumption that the
potential $u_{t}$ be uniformly bounded along the flow The previous
theorem should be viewed in the light of the corresponding result
for the Kähler-Ricci flow in $\beta c_{1}(K_{X}).$ As shown by Cao
\cite{cao-1} this latter flow exists for all times, regardless of
the sign of $\beta,$ and converges to the Kähler-Einstein metric
when $\beta<0.$ However, the convergence towards a Kähler-Einstein
metric (when it exists) was only proved recently by Tian-Zhou \cite{t-z}
using the deep estimates of Perelman. The previous theorems extends
to the setting of twisted Kähler-Einstein metrics as long as the twisting
form $\theta$ is non-negative (see Remark \ref{rem:twisted calabi flow}).

\subsection{Applications to Donaldson's equation}

In section \ref{sub:K=00003D0000E4hler-Einstein-metrics-on log} we
will consider twisted Kähler-Einstein metrics for the singular twisting
form befined by the current of integration along a divisor on $X.$
We will be particularly interested in the case when $X$ is a Fano
manifold and the divisor $D$ is smooth and represents $c_{1}(-K_{X})$
and consider the following equation

\begin{equation}
\mbox{Ric}\omega_{\gamma}=\gamma\omega_{\gamma}+(1-\gamma)\delta_{D}\label{eq:donaldsons equ}
\end{equation}
 where $\gamma>$ and $\delta_{D}$ denotes the current of integration
along $D.$ The equation was recently studied by Donaldson who conjectured
that it admits solutions for $\gamma$ sufficently small. This is
confirmed by the following theorem formulated in terms of the ordinary
alpha-invariants of $-K_{X}$ and its restriction to $D:$ 
\begin{thm}
\label{thm:don eq}Let $X$ be a Fano manifold with a smooth anti-canonical
divisor $D.$ Let $\gamma$ be a fixed parameter such that 
\[
0<\gamma<\Gamma:=\frac{n+1}{n}\min\left\{ \alpha(-K_{X}),\alpha((-K_{X})_{|D})\right\} ,
\]
 (where $\Gamma>0).$ 
\begin{itemize}
\item There is a smooth Kähler-Einstein metric $\omega_{\gamma}$ on $X-D$
such that $\omega_{\gamma}$ has Hölder continuous local potentials
on all of $X$ and such that equation \ref{eq:donaldsons equ} holds
globally on $X.$ Moreover, $\omega_{\gamma}\geq\omega$ for some
Kähler form $\omega$ on $X.$ 
\item the metric $\omega_{\gamma}$ is unique (among all metrics with bounded
potentials) and $\gamma\mapsto\omega_{\gamma}$ $(\gamma\in]0,\Gamma[)$
is a continuous curve in the space of Kähler currents on $X$ and
the restriction to $X-D$ gives a continuous curve in the space of
all Kähler forms on $X-D$ equipped with the $\mathcal{C}^{\infty}-$topology
on compacts. 
\end{itemize}
\end{thm}
Donaldson proposed a program for producing Kähler-Einstein metric
by first obtaining solutions to equation \ref{eq:donaldsons equ}
for some $\gamma=\gamma_{0}$ and then deforming $\gamma$ up to $\gamma=1$
(using an assumption of K-stability in the last step). More precisely,
in Step 1 in the notes \cite{do-2}, p.33, it was conjectured that
there is a solution for $\gamma_{0}$ sufficiently small, which moreover
has\emph{ }cone singularities along $D.$ It should be pointed out
that the Kähler-Einstein metric $\omega_{\gamma}$ on $X-D$ produced
in the proof of Theorem \ref{thm:don eq}, a priori, only has a volume
form with cone singularities along $D.$ However, in the orbifold
case, i.e. when $\gamma=1-1/m$ for some positive integer $m,$ it
follows from standard arguments \cite{t-y0} that the metric $\omega_{\gamma}$
itself has cone singularities (see the discussion in section \ref{sec:Donaldson's-equation-and}
for the general case). Donaldson proposed solving the equation \ref{eq:donaldsons equ}
for $\gamma$ sufficiently small by perturbing the complete Ricci
flat metric of Tian-Yau on $X-D$ which, at least formally, is a solution
of equation \ref{eq:donaldsons equ} when $\gamma=0$ \cite{t-y-1}.
This can be seen as a singular variant of the usual continuity method. 

One virtue of the present approach is that it bypasses the openness
problem in the proposed continuity method. The key point of the proof
is to study how the alpha-invariant of the pair $(X,(1-\gamma)D)$
depends on the parameter $\gamma.$ This will allow us to show that
the twisted Mabuchi K-energy $\mathcal{K}_{(1-\gamma)D}$ is coercive
when $\gamma<\Gamma.$ Then the previous variational approach can
be used to produce a weak solution to equation \ref{eq:donaldsons equ}.
As for the uniqueness it is deduced from Berndtsson's very recent
generalized Bando-Mabuchi theorem \cite{bern}, saying that uniqueness
holds for solutions to equations of the form \ref{eq:donaldsons equ},
given a smooth divisor $D,$ unless there are non-trivial holomorphic
vector fields on $X$ tangent to $D.$ In our case the non-existence
of such vector fields follows from the properness of $\mathcal{K}_{(1-\gamma)D},$
which, as explained above, holds for any positive $\gamma$ which
is sufficiently small.

It should be pointed out that in case of negative Ricci curvature
the existence of Kähler-Einstein metric with conical singularities
along a divisor was previously conjectured by Tian \cite{ti3} in
connection to applications to algebraic geometry and further studied
by Jeffres \cite{j} and Mazzeo \cite{ma} (where an existence result
was announced for $\gamma\in]0,1/2]$). See the end of section \ref{sec:Donaldson's-equation-and}
for a futher discussion of very recent developments concerning cone
singularities.

\subsection*{Organization}

In Section \ref{sec:Functionals-on-the} we setup the pluripotential
theoretic and functional analytical framework, emphasizing the role
of Legendre transforms (in infinite dimensions). In section \ref{sec:Monge-Amp=00003D0000E8re-mean-field}
the main results concerning general Monge-Ampère mean field equations
stated in the introduction are proved. In the following sections these
results are applied and refined in the setting of twisted Kähler-Einstein
metrics (section \ref{sub:The-(twisted)-K=00003D0000E4hler-Einstein}),
the Calabi flow (section \ref{sec:Convergence-of-the-cal}) and log
Fano manifolds and Donaldson's equation (section \ref{sub:K=00003D0000E4hler-Einstein-metrics-on log}).
In the appendix we generalize some results of Demailly on the relation
between alpha-invariants and log canonical thresholds to the setting
of klt pairs.

\subsection*{Acknowledgments}

I am very grateful to Sébastien Boucksom, Vincent Guedj and Ahmed
Zeriahi for the stimulating collaboration \cite{bbgz} which paved
the way for the present paper. Also thanks to Bo Berndtsson for discussions
related to \cite{bern}, to Yanir Rubinstein for once sending me his
thesis where I learned about the $C^{2}-$estimate in \cite{b-k,rub1}
and for giving many useful comments on drafts of the present paper
and also thanks to Valentino Tosatti and Gabor Székelyhidi for their
comments.

\subsection*{Notational remark}

Throughout, $C,$ $C'$ etc denote constants whose values may change
from line to line

\section{\label{sec:Functionals-on-the}Functionals on the spaces of probability
measures and $\omega-$psh functions and Legendre duality}

In this section we will consider various functionals defined on the
space $\mathcal{M}_{1}(X)$ of probability measures on $X$, as well
as on the space $PSH(X,\omega)$ of \emph{$\omega-$psh functions}
on $X$ (also called \emph{potentials}). It will be important to also
work with different subspaces of these spaces:

\[
\begin{array}{rccccc}
V(X):= & \mbox{\ensuremath{\left\{ Volume\,\ forms\right\} }} & \subset & E_{1}(X,\omega) & \subset & \mathcal{M}_{1}(X)\\
\mathcal{H}(X,\omega):= & \mbox{\ensuremath{\left\{ K\ddot{a}hler\,\ potentials\right\} }} & \subset & \mathcal{E}^{1}(X,\omega) & \subset & PSH(X,\omega)
\end{array}
\]
 where $E_{1}(X,\omega)$ and $\mathcal{E}^{1}(X,\omega)$ are the
subspaces of \emph{finite energy} elements. These notions are higher
dimensional versions of the energy notions familiar from the classical
theory of Dirichlet spaces on Riemann surfaces. The general definitions
and relations to Legendre transforms will be recalled below.

\subsection{Functional analytic framework and Legendre-Fenchel transforms}

We equip the space $\mathcal{M}(X)$ of all signed finite Borel measures
on $X$ with its  usual weak topology, i.e. $\mu_{j}\rightarrow\mu$
iff 
\[
\left\langle u,\mu_{j}\right\rangle :=\int_{X}u\mu_{j}\rightarrow\int_{X}u\mu
\]
 for any continuous function $u,$ i.e. for all $u\in C^{0}(X).$
In other words, $\mathcal{M}(X)$ is the topological dual of the vector
space $C^{0}(X).$ We will be mainly concerned with the subspace $\mathcal{M}_{1}(X)$
of all probability measures on $X$ which is a convex compact subset
of $\mathcal{M}(X).$ This latter space is a locally convex topological
vector space. As such it admits a good duality theory (see section
4.5.2 in \cite{de-ze}): given a functional $\Lambda$ on the vector
space $C^{0}(X)$ its \emph{Legendre(-Fenchel) transform} is the following
functional $\Lambda^{*}$ on $\mathcal{M}(X):$ 
\[
\Lambda^{*}(\mu):=\sup_{u\in C^{0}(X)}(\Lambda(u)-\left\langle u,\mu\right\rangle )
\]
 Conversely, if $H$ is a functional on the vector space $\mathcal{M}(X)$
we let 
\[
H^{*}(u):=\inf_{\mu\in\mathcal{M}(X)}(H(\mu)+\left\langle u,\mu\right\rangle )
\]
 Note that we are using rather non-standard sign conventions. In particular,
$\Lambda^{*}(\mu)$ is always \emph{convex }and \emph{lower semi-continuous
(lsc),} while $H^{*}(u)$ is \emph{concave} and \emph{upper-semicontinuos
(usc). }As a well-known consequence of the Hahn-Banach separation
theorem we have the following fundamental duality relation (Lemma
4.5.8 in \cite{de-ze}): 
\begin{equation}
\Lambda=(\Lambda^{*})^{*}\label{eq:dualiy}
\end{equation}
 iff $\Lambda$ is concave and usc. We also recall the following basic
fact (we will not use the uniqueness property, only the minimization
property) 
\begin{lem}
\label{lem:maxim is differ}Assume that $\Lambda$ is a functional
on $C^{0}(X)$ which is finite, concave and Gateaux differentiable
(i.e differentiable along lines). Then, for a fixed $u\in C^{0}(X)$
the differential $d\Lambda_{|u}$ is the unique minimizer of the following
functional on $\mathcal{M}(X):$ 
\begin{equation}
\mu\mapsto\Lambda^{*}(\mu)+\left\langle u,\mu\right\rangle \label{eq:functional in lemma max diff}
\end{equation}
 (and the minimum value equals $\Lambda(u)).$\end{lem}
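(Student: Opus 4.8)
The plan is to deduce this from the duality relation \eqref{eq:dualiy}. The key observation is that for a fixed $u \in C^0(X)$, the map $\mu \mapsto \Lambda^*(\mu) + \langle u,\mu\rangle$ is precisely what one infimizes to compute $(\Lambda^*)^*(u)$, and since $\Lambda$ is concave and usc, $(\Lambda^*)^*(u) = \Lambda(u)$. Hence the infimum of the functional \eqref{eq:functional in lemma max diff} equals $\Lambda(u)$, which already gives the asserted minimum value. It remains to identify $d\Lambda_{|u}$ as an actual minimizer attaining this value.

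First I would show that $d\Lambda_{|u}$ attains the infimum. The idea is the standard subgradient/support-functional argument for convex (here concave) functions: differentiability of $\Lambda$ at $u$ means $d\Lambda_{|u} \in \mathcal{M}(X)$ (its differential along lines is a bounded linear functional on $C^0(X)$, hence a signed measure) and, by concavity, $\Lambda(v) \leq \Lambda(u) + \langle v - u, d\Lambda_{|u}\rangle$ for all $v \in C^0(X)$; that is, $\Lambda(v) - \langle v, d\Lambda_{|u}\rangle \leq \Lambda(u) - \langle u, d\Lambda_{|u}\rangle$. Taking the supremum over $v \in C^0(X)$ on the left gives $\Lambda^*(d\Lambda_{|u}) \leq \Lambda(u) - \langle u, d\Lambda_{|u}\rangle$, i.e. $\Lambda^*(d\Lambda_{|u}) + \langle u, d\Lambda_{|u}\rangle \leq \Lambda(u)$. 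Combined with the lower bound $\Lambda^*(\mu) + \langle u,\mu\rangle \geq \Lambda(u)$ valid for all $\mu$ (immediate from the definition of $\Lambda^*$, or from $(\Lambda^*)^*(u) \leq \Lambda(u)$), we conclude $\mu = d\Lambda_{|u}$ is a minimizer with value exactly $\Lambda(u)$.

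For the uniqueness statement (which, as remarked, will not be used), one argues that if $\mu$ is any minimizer then the affine function $v \mapsto \Lambda(u) + \langle v - u, \mu\rangle$ lies above $\Lambda$ and agrees with it at $u$; differentiating this inequality along a line through $u$ in both directions forces $\langle w, \mu\rangle = d\Lambda_{|u}(w)$ for all $w \in C^0(X)$, so $\mu = d\Lambda_{|u}$. The only mildly technical point here is justifying that the differential along lines is a \emph{continuous} linear functional, hence represented by a finite signed measure — but this is forced by concavity plus finiteness of $\Lambda$ on the Banach space $C^0(X)$ (local Lipschitz continuity of finite convex functions on Banach spaces), and in the applications $\Lambda$ will have an explicit differential that is manifestly a measure.

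The main obstacle, such as it is, is bookkeeping with the non-standard sign conventions: one must be careful that $\Lambda^*$ is defined with a minus sign and $H^*$ with a plus sign, so that the relevant Young-type inequality reads $\Lambda^*(\mu) + \langle u,\mu\rangle \geq \Lambda(u)$ rather than the textbook $\leq$. Once the signs are pinned down, the proof is the one-line convexity argument above, and no analysis beyond \eqref{eq:dualiy} and the definition of the Gateaux differential is required.
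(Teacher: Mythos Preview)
Your proof is correct and follows essentially the same route as the paper's: both use the duality relation \eqref{eq:dualiy} to identify the minimum value as $\Lambda(u)$, use the concavity inequality $\Lambda(v)\le\Lambda(u)+\langle v-u,d\Lambda_{|u}\rangle$ to show $d\Lambda_{|u}$ attains it, and for uniqueness show that any minimizer $\mu$ is a subdifferential at $u$ and then differentiate along lines $u+tv$ to force $\mu=d\Lambda_{|u}$.
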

\begin{proof}
As a courtesy to the reader we give the simple proof. By the duality
relation \ref{eq:dualiy} the minimal value of the functional \ref{eq:functional in lemma max diff}
is indeed $\Lambda(u),$ which means that $\mu_{u}$ is a minimizer
iff 
\[
\Lambda(u)\leq\Lambda(u')+\left\langle u-u',\mu_{u}\right\rangle 
\]
 for all $u'\in C^{0}(X).$ When $\mu=d\Lambda_{|u}$ the previous
inequality follows immediately from the concavity of $\Lambda.$ More
generally, any $\mu_{u}$ satisfying the previous inequality is called
a \emph{subdifferential} for $\Lambda$ at $u.$ To prove uniqueness
we take $u'=u+tv$ for $v\in C^{0}(X)$ and $t\in\R$ and divide the
previous inequality by $t,$ letting $t$ tend to zero, first for
$t>0$ and then for $t<0,$ giving 
\[
\frac{d\Lambda(u+tv)}{dt}_{t=0^{-}}\leq\left\langle v,\mu_{u}\right\rangle \leq\frac{d\Lambda(u+tv)}{dt}_{t=0^{+-}}
\]
 Since $\Lambda$ is Gateaux differentiable the left and right derivative
above coincide forcing $\left\langle v,\mu_{u}\right\rangle =\left\langle v,d\Lambda_{|u}\right\rangle $
for any $v\in C^{0}(X).$ 
\end{proof}
Conversely, if the functional in the statement of the lemma above
has a unique maximizer $\mu_{u}$ then $\Lambda$ is Gateaux is differentiable
with $d\Lambda_{|u}=\mu_{u}.$ We will prove a variant of this fact
in Prop \ref{pro:deriv of E} below.

\subsection{The space $PSH(X,\omega)$ of $\omega-$psh\label{sub:The-space-}
functions}

A general reference for this section is \cite{g-z}. The space $PSH(X,\omega)$
of \emph{$\omega-$psh functions} (sometimes simple called \emph{potentials})
is defined as the space of all functions $u\in L^{1}(X)(:=L^{1}(X,\omega^{n})$
with values in $[-\infty,\infty[$ which are upper semi-continuous
and such that 
\[
\omega_{u}:=\omega+dd^{c}u\geq0
\]
 in the sense of currents. We endow $PSH(X,\omega)$ with the $L^{1}-$topology.
There is a basic continuous bijection \cite{g-z} 
\[
u\mapsto\omega_{u},\,\,\, PSH(X,\omega)/\R\leftrightarrow\left\{ \mbox{positive closed currents in \ensuremath{[\omega]}}\right\} 
\]
 where the right hand side is equipped with the weak topology (and
the space coincides with $\mathcal{M}_{1}(X)$ when $n=1$ and $V=1).$
In particular, this shows that $PSH(X,\omega)/\R$ is \emph{compact.
}The subspace of all \emph{Kähler potentials} is defined by 
\[
\mathcal{H}(X,\omega):=\left\{ u\in\mathcal{C^{\infty}}(X):\,\omega_{u}>0\right\} 
\]
 so that $\mathcal{H}(X,\omega)/\R$ is isomorphic to the space of
all Kähler forms in the class $\ensuremath{[\omega].}$ By the fundamental
approximations results of Demailly \cite{dem0} $\mathcal{H}(X,\omega)$
is dense in $PSH(X,\omega).$ See also \cite{bl-k} for a simple proof
of the last statement in the following proposition. 
\begin{prop}
\label{pro:(Demailly-ref).-The} The space $\mathcal{H}(X,\omega)$
is dense in $PSH(X,\omega)$ (wrt the $L^{1}-$topology): 
\[
PSH(X,\omega)=\overline{\mathcal{H}(X,\omega)}
\]
 More precisely, any $\omega-$psh function can be written as a decreasing
limit of elements $u_{j}$ in $\mathcal{H}(X,\omega).$ 
\end{prop}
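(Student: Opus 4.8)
The plan is to reduce the assertion to \emph{Demailly's regularization theorem} \cite{dem0} (for which \cite{bl-k} gives a short proof) and then to upgrade the approximants it produces --- which a priori are only smooth and \emph{almost} $\omega$-psh --- to genuine K\"ahler potentials by an elementary rescaling.

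First I would recall the shape of the regularization. Cover $X$ by finitely many coordinate balls $B_{i}$ on which $\omega=dd^{c}\varphi_{i}$ for smooth local potentials $\varphi_{i}$; then $u+\varphi_{i}$ is plurisubharmonic on $B_{i}$ and can be smoothed by convolution with a standard mollifier, giving on slightly smaller balls smooth functions $u_{i}^{\varepsilon}$ with $u_{i}^{\varepsilon}+\varphi_{i}$ plurisubharmonic and $u_{i}^{\varepsilon}\searrow u$ as $\varepsilon\searrow 0$. These local pieces are glued by Demailly's regularized maximum $\max_{\eta}$, which is smooth, plurisubharmonic in each argument, and agrees with the ordinary maximum off the diagonal. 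The outcome is a decreasing sequence of \emph{smooth} functions $u_{j}$ on $X$ with $u_{j}\searrow u$ and
\[
\omega+dd^{c}u_{j}\ \geq\ -c_{j}\,\omega,\qquad c_{j}\to 0^{+},
\]
the positivity loss $c_{j}$ arising only from the gluing over the overlaps and being controlled by the oscillation of the $\varphi_{i}$. (This is precisely \cite{dem0} in the soft form where Lelong numbers are ignored, cf.\ \cite{bl-k}; one may equally well take the displayed statement as a black box.) Passing to a subsequence we may assume $c_{j}$ decreasing, and after subtracting a common constant we may assume $u_{j}\leq 0$ on $X$ (the constant being added back at the end, which affects nothing below).

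The next step removes the loss. From $\omega+dd^{c}u_{j}\geq -c_{j}\omega$ one gets $(1+2c_{j})\omega+dd^{c}u_{j}\geq (1+c_{j})\omega>0$, i.e.
\[
\omega+dd^{c}\Bigl(\frac{u_{j}}{1+2c_{j}}\Bigr)\ \geq\ \frac{1+c_{j}}{1+2c_{j}}\,\omega\ >\ 0 ,
\]
so $w_{j}:=u_{j}/(1+2c_{j})\in\mathcal{H}(X,\omega)$. Since $u_{j}\leq 0$ is decreasing and $1+2c_{j}>1$ is decreasing, $(w_{j})$ is decreasing, and $w_{j}\geq u_{j}\geq u$; moreover $w_{j}-u_{j}=-u_{j}\,\frac{2c_{j}}{1+2c_{j}}\to 0$ wherever $u$ is finite, hence almost everywhere, while $w_{j}\leq u_{j}/(1+2\sup_{k}c_{k})\to -\infty$ where $u=-\infty$. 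Thus $w_{j}\searrow w$ with $w=u$ almost everywhere; both being $\omega$-psh and not identically $-\infty$, $w=u$ everywhere. This yields the refined statement, and the density of $\mathcal{H}(X,\omega)$ in $PSH(X,\omega)$ follows because decreasing convergence of $\omega$-psh functions that are uniformly bounded above forces $L^{1}$-convergence (monotone convergence applied to $w_{1}-w_{j}$).

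The only substantive point is the gluing step: assembling the local convolutions into one \emph{global} smooth function at the cost of a positivity loss of order $\varepsilon$ --- that is, Demailly's regularized-maximum argument on a compact K\"ahler manifold. Everything else (local mollification, the rescaling trick, the monotonicity bookkeeping) is routine, and if one is willing to quote \cite{dem0,bl-k} for the starting inequality the proof reduces to the short second and third steps above.
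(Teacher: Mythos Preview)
The paper does not actually prove this proposition: the sentence preceding it attributes the density to Demailly's regularization results \cite{dem0} and refers to \cite{bl-k} for a short proof of the decreasing-approximation statement, and no further argument is given. Your proposal is correct and amounts to a sketch of precisely the approach in those references: quote the existence of smooth $u_{j}\searrow u$ with $\omega+dd^{c}u_{j}\geq -c_{j}\omega$, $c_{j}\to 0$, and then rescale by $1/(1+2c_{j})$ to land in $\mathcal{H}(X,\omega)$ while preserving monotonicity (your bookkeeping for this step --- normalizing $u_{j}\leq 0$, checking $w_{j}$ is decreasing and squeezed between $u$ and $u_{j}$ --- is fine). So your write-up is, if anything, more detailed than what the paper supplies.
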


\subsection{\label{sub:The-Monge-Amp=00003D0000E8re-operator}The Monge-Ampère
operator and the functional $\mathcal{E_{\omega}}(u)$}

In this section and the following one we recall notions and results
from \cite{g-z2,begz,bbgz} (a part from Prop \ref{pro:deriv of E},
which is new). Let us start by recalling the definition of the Monge-Ampère
measure $MA(u)$ on\emph{ smooth} functions. It is defined by 
\[
MA(u):=\frac{(\omega+dd^{c}u)^{n}}{Vn!}=:\frac{(\omega_{u})^{n}}{Vn!}
\]
 which is hence a (positive) probability measure when $u\in PSH(X,\omega).$
The Monge-Ampère $MA$ operator may be naturally identified with a
one-form on the vector space $C^{\infty}(X)$ by letting 
\[
\left\langle MA_{|u},v\right\rangle :=\int_{X}MA(u)v
\]
 for $u\in C^{\infty}(X).$ As observed by Mabuchi \cite{m2,m1} (in
the context of Kähler-Einstein geometry) the one-form $MA$ is closed
and hence it has a primitive $\mathcal{E}_{\omega}$ (defined up to
an additive constant) on the space all smooth weights, i.e. 
\begin{equation}
d\mathcal{E}_{|u}=MA(u)\label{eq:def prop of energy on psh}
\end{equation}
 We fix the additive constant by requiring $\mathcal{E}_{\omega}(0)=0.$
Integrating $\mathcal{E}_{\omega}$ along line segments one arrives
at the following well-known formula:
\begin{equation}
\mathcal{E}_{\omega}(u):=\frac{1}{(n+1)!V}\sum_{j=0}^{n}\int_{X}u\omega_{u}^{j}\wedge(\omega)^{n-j}.\label{eq:bi-energy}
\end{equation}
Conversely, one can simply take this latter formula as the definition
of $\mathcal{E}_{\omega}$ and observe that the following proposition
holds (compare \cite{b-b} for a more general singular setting): 
\begin{prop}
\label{pro:of energy}The following holds

$(i)$ The differential of the functional $\mathcal{E}_{\omega}$
at a smooth function $u$ is represented by the measure $MA(u),$
i.e. 
\begin{equation}
\frac{d}{dt}_{t=0}(\mathcal{E}_{\omega}(u+tv))=\int_{X}MA(u)v\label{eq:diff}
\end{equation}

$(ii)$ $\mathcal{E}_{\omega}$ is increasing on the space of all
smooth $\omega-$psh functions

$(iii)$ $\mathcal{E}_{\omega}$ is concave on the space of all smooth
smooth $\omega-$psh functions and when $n=1$ it is concave on all
of $C^{\infty}(X)$ 
\end{prop}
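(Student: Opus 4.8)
The plan is to prove $(i)$ by differentiating the explicit formula \ref{eq:bi-energy} directly, and then to deduce $(ii)$ and $(iii)$ from the resulting formula for the first variation together with one further differentiation.

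For $(i)$, I would set $u_{t}:=u+tv$, so that $\omega_{u_{t}}=\omega_{u}+t\,dd^{c}v$, and differentiate each summand $\int_{X}(u+tv)\,\omega_{u_{t}}^{j}\wedge\omega^{n-j}$ of \ref{eq:bi-energy} at $t=0$. This yields two kinds of terms: one with $v$ undifferentiated, namely $\int_{X}v\,\omega_{u}^{j}\wedge\omega^{n-j}$, and one coming from the variation of the form, namely $j\int_{X}u\,dd^{c}v\wedge\omega_{u}^{j-1}\wedge\omega^{n-j}$ (the latter vanishing for $j=0$). On the second term I would integrate by parts, using that $\int_{X}v\,dd^{c}w\wedge T=\int_{X}w\,dd^{c}v\wedge T$ for any smooth closed form $T$ (Stokes' theorem, $X$ being closed), to move $dd^{c}$ onto $u$, and then substitute $dd^{c}u=\omega_{u}-\omega$. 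Writing $a_{j}:=\int_{X}v\,\omega_{u}^{j}\wedge\omega^{n-j}$, the $j$-th term becomes $(j+1)a_{j}-j\,a_{j-1}$, so the derivative of \ref{eq:bi-energy} equals $\frac{1}{(n+1)!V}\bigl(\sum_{j=0}^{n}(j+1)a_{j}-\sum_{j=1}^{n}j\,a_{j-1}\bigr)$, which telescopes to $\frac{n+1}{(n+1)!V}a_{n}=\frac{1}{n!V}\int_{X}v\,\omega_{u}^{n}=\int_{X}v\,MA(u)$. This is \ref{eq:diff}. Note the computation is purely algebraic in $u$, hence valid for all smooth $u$, not merely $\omega$-psh ones; in particular it re-proves that $MA$ is a closed one-form with primitive $\mathcal{E}_{\omega}$.

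For $(ii)$, let $u\leq u'$ be smooth and $\omega$-psh. The segment $u_{t}:=u+t(u'-u)$, $t\in[0,1]$, lies in $PSH(X,\omega)$ by convexity of the $\omega$-psh condition, so by $(i)$ one has $\frac{d}{dt}\mathcal{E}_{\omega}(u_{t})=\int_{X}(u'-u)\,MA(u_{t})\geq0$, since $u'-u\geq0$ and $MA(u_{t})$ is a positive measure; integrating over $t\in[0,1]$ gives $\mathcal{E}_{\omega}(u)\leq\mathcal{E}_{\omega}(u')$. For $(iii)$, along the same segment (now with $u_{0},u_{1}$ arbitrary smooth $\omega$-psh and $v:=u_{1}-u_{0}$) I would differentiate $(i)$ once more, obtaining $\frac{d^{2}}{dt^{2}}\mathcal{E}_{\omega}(u_{t})=\frac{1}{(n-1)!V}\int_{X}v\,dd^{c}v\wedge\omega_{u_{t}}^{n-1}=-\frac{1}{(n-1)!V}\int_{X}dv\wedge d^{c}v\wedge\omega_{u_{t}}^{n-1}$ by integration by parts. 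Since $dv\wedge d^{c}v=\frac{i}{2\pi}\partial v\wedge\overline{\partial}v$ is a non-negative $(1,1)$-form and $\omega_{u_{t}}\geq0$, the integrand is a non-negative volume form, so the second derivative is $\leq0$ and $t\mapsto\mathcal{E}_{\omega}(u_{t})$ is concave; this is concavity of $\mathcal{E}_{\omega}$ on the convex set of smooth $\omega$-psh functions. When $n=1$ the factor $\omega_{u_{t}}^{n-1}=1$ disappears, so $\frac{d^{2}}{dt^{2}}\mathcal{E}_{\omega}(u_{t})=-\frac{1}{V}\int_{X}dv\wedge d^{c}v\leq0$ for \emph{every} $v\in C^{\infty}(X)$, giving concavity on all of $C^{\infty}(X)$.

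I do not expect a genuine obstacle here: the argument is bookkeeping built on Stokes' theorem, and differentiation under the integral sign is legitimate since everything in sight is smooth. The points requiring a little care are the sign conventions for $d^{c}$ (so that $dv\wedge d^{c}v$ is indeed $\geq0$), and the standard fact that wedging a non-negative $(1,1)$-form with powers of a closed non-negative $(1,1)$-form produces a non-negative top-degree form; both are routine.
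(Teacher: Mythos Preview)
Your proof is correct and is exactly the standard computation one would carry out here. The paper itself does not give a detailed proof of this proposition: it introduces the explicit formula \ref{eq:bi-energy}, remarks that the proposition then holds (referring to \cite{b-b} for a more general singular setting), and only notes explicitly that $(ii)$ is a direct consequence of $(i)$ since the differential is represented by a positive measure---precisely your argument for $(ii)$.
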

Note that $(ii)$ is a direct consequence of $(i),$ since the differential
of $\mathcal{E}_{\omega}$ is represented by a (positive) measure.

Following \cite{bbgz} we will sometimes refer to $\mathcal{E}_{\omega}$
as the \emph{Aubin-Mabuchi functional} (not to be confused with Mabuchi's
K-energy functional).

\subsubsection{The general singular setting}

One first extends the Aubin-Mabuchi functional $\mathcal{E}_{\omega}$
(formula \ref{eq:bi-energy}) to all $\omega-$psh functions by defining
\[
\mathcal{E}_{\omega}(u):=\inf_{u'\geq v}\mathcal{E}_{\omega}(u')\in[-\infty,\infty[
\]
 where $u$ ranges over all locally bounded (or smooth) $\omega-$psh
functions $u'$ such that $u'\geq u.$ Next, we let 
\[
\mathcal{E}^{1}(X,\omega):=\{u\in PSH(X,\omega):\,\mathcal{E}_{\omega}(u)>-\infty\},
\]
that we will refer to as the space of all $\omega-$psh functions
with\emph{ finite (pluri-)energy. }In the Riemann surface case $\mathcal{E}^{1}(X,\omega)$
is the classical Dirichlet subspace of $PSH(X,\omega)$ consisting
of all functions whose gradient is in $L^{2}(X).$

As a consequence of the monotonicity of $\mathcal{E}_{\omega}(u)$
and Bedford-Taylor's fundamental local continuity result for mixed
Monge-Ampère operators one obtains the following proposition (cf.
\cite{begz}, Prop 2.10; note that $\mathcal{E}_{\omega}=-E_{\chi}$
for $\chi(t)=t$ in the notation in op. cit.) 
\begin{prop}
\label{pro:energy is ups}The functional $\mathcal{E}_{\omega}(u)$
is upper semi-continuous on $PSH(X,\omega),$ concave and non-decreasing.
Moreover, it is continuous wrt decreasing sequences in $PSH(X,\omega).$
\end{prop}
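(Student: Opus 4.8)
The plan is to reduce every assertion to the corresponding property of $\mathcal{E}_\omega$ on \emph{bounded} $\omega$-psh functions (monotonicity and concavity there being parts (ii) and (iii) of Proposition \ref{pro:of energy}), using the definition $\mathcal{E}_\omega(u)=\inf\{\mathcal{E}_\omega(u'):u'\ge u,\ u'\ \omega\text{-psh and locally bounded}\}$ together with Bedford--Taylor's local continuity theorem for mixed Monge--Amp\`ere operators. Two elementary reductions come first. Since $X$ is compact, a locally bounded $\omega$-psh function is globally bounded, so the defining infimum is attained along the canonical decreasing truncations $u^{(m)}:=\max(u,-m)$: $\mathcal{E}_\omega(u)=\inf_m\mathcal{E}_\omega(u^{(m)})=\lim_m\mathcal{E}_\omega(u^{(m)})$. \emph{Monotonicity} is then immediate, since $u\le v$ makes every bounded majorant of $v$ a majorant of $u$, so the infimum defining $\mathcal{E}_\omega(u)$ runs over a larger family.

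The heart of the matter is \emph{continuity along decreasing sequences}. For bounded $u_j\searrow u$ I would integrate the defining relation $d\mathcal{E}_\omega=MA$ along the affine segment from $u$ to $u_j$ and expand $\omega_{(1-t)u+tu_j}^{\,n}=\bigl((1-t)\omega_u+t\omega_{u_j}\bigr)^n$, obtaining the cocycle identity
\[
\mathcal{E}_\omega(u_j)-\mathcal{E}_\omega(u)=\frac{1}{(n+1)Vn!}\sum_{k=0}^{n}\int_X(u_j-u)\,\omega_{u_j}^{k}\wedge\omega_u^{n-k};
\]
here each measure $\omega_{u_j}^{k}\wedge\omega_u^{n-k}$ has total mass $Vn!$ and, by Bedford--Taylor, converges weakly to $\omega_u^{n}$ as $u_j\searrow u$, while $0\le u_j-u\searrow0$, so the right-hand side tends to $0$. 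For a general decreasing sequence $u_j\searrow u$ (with $\mathcal{E}_\omega(u)$ possibly $-\infty$) I would apply this to the truncations $\max(u_j,-m)$ and interchange the two non-increasing limits in $j$ and $m$ --- legitimate for a doubly monotone family --- to get $\lim_j\mathcal{E}_\omega(u_j)=\lim_m\lim_j\mathcal{E}_\omega(\max(u_j,-m))=\lim_m\mathcal{E}_\omega(\max(u,-m))=\mathcal{E}_\omega(u)$. \emph{Upper semicontinuity} then follows by a standard subsequence argument: if $u_j\to u$ in $L^1$, after extracting a subsequence converging a.e.\ the regularized suprema $w_k:=(\sup_{l\ge k}u_l)^{*}$ form a decreasing sequence of $\omega$-psh functions with $w_k\searrow u$ and $w_k\ge u_k$, so by monotonicity and the previous step $\limsup_k\mathcal{E}_\omega(u_k)\le\lim_k\mathcal{E}_\omega(w_k)=\mathcal{E}_\omega(u)$; since every subsequence has a further subsequence behaving this way, $\limsup_j\mathcal{E}_\omega(u_j)\le\mathcal{E}_\omega(u)$. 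Finally, for \emph{concavity}, given $u_0,u_1\in PSH(X,\omega)$ and $t\in[0,1]$ the functions $(1-t)\max(u_0,-m)+t\max(u_1,-m)$ are bounded, $\omega$-psh, and decrease to $(1-t)u_0+tu_1$ as $m\to\infty$; applying concavity in the bounded case and then letting $m\to\infty$ using the continuity just established yields $\mathcal{E}_\omega((1-t)u_0+tu_1)\ge(1-t)\mathcal{E}_\omega(u_0)+t\mathcal{E}_\omega(u_1)$.

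The one genuinely delicate point --- and the only place where real analysis, as opposed to bookkeeping with monotone limits, enters --- is the vanishing of the integrals $\int_X(u_j-u)\,\omega_{u_j}^{k}\wedge\omega_u^{n-k}$ in the continuity step: weak convergence of the currents does not suffice on its own, because $u_j-u$ is only bounded and upper semicontinuous. The standard way around this, which I would invoke from \cite{g-z2,begz} (Proposition 2.10 of the latter) rather than reprove in detail, is to expand $\omega_{u_j}^{k}=\bigl(\omega_u+dd^c(u_j-u)\bigr)^k$, integrate by parts so as to rewrite the integrals in terms of Dirichlet-type energies of the nonnegative function $u_j-u$, and then combine Bedford--Taylor's monotone convergence theorem with the plurifine continuity of $\omega$-psh potentials. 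Everything else --- the two reductions, the interchange of monotone limits, the passage from $L^1$ convergence to a decreasing majorizing subsequence, and the deductions of concavity and upper semicontinuity --- is formal once the bounded case and this continuity statement are in place.
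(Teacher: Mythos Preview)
Your proposal is correct and follows essentially the approach the paper indicates: the paper does not give a self-contained proof but simply notes that the proposition is a consequence of monotonicity together with Bedford--Taylor's local continuity for mixed Monge--Amp\`ere operators, referring to \cite{begz}, Prop.~2.10, for details. Your argument unpacks precisely this --- monotonicity from the infimum definition, continuity along decreasing sequences via the cocycle identity and Bedford--Taylor (plus the double-monotone-limit trick for the unbounded case), upper semicontinuity via the Hartogs-type envelopes $w_k=(\sup_{l\ge k}u_l)^*$, and concavity by approximation --- so there is no substantive difference in route.
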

For any $u\in\mathcal{E}^{1}(X,\omega)$ the (non-pluripolar) Monge-Ampère
measure $MA(u)$ is well-defined \cite{begz} and does not charge
any pluripolar sets. We collect the continuity properties that we
will use in the following \cite{begz} 
\begin{prop}
\label{pro:non-pluripol ma}Let $u_{i}$ be a sequence decreasing
to $u\in\mathcal{E}^{1}(X,\omega).$ Then, as $i\rightarrow\infty,$
\[
MA(u_{i})\rightarrow MA(u)
\]
and 
\[
u_{i}MA(u_{i})\rightarrow uMA(u)
\]
in the weak topology of measures and $\mathcal{E}_{\omega}(u_{j})\rightarrow\mathcal{E}_{\omega}(u).$ 
\end{prop}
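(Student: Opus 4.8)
The plan is to reduce everything to the single-variable (Bedford--Taylor) continuity statement for mixed Monge--Amp\`ere operators on \emph{bounded} potentials, together with a truncation/monotonicity argument that upgrades it to $\mathcal{E}^1(X,\omega)$. First I would recall that for locally bounded $\omega$-psh functions $v_i\searrow v$ one has $MA(v_i)\to MA(v)$ and $v_iMA(v_i)\to vMA(v)$ weakly; this is the classical local result of Bedford--Taylor glued together on $X$ (using a partition of unity and writing $\omega_{v_i}^n$ locally in terms of genuine psh functions). The point is then to pass from the bounded case to the finite-energy case. Given the decreasing sequence $(u_i)\subset\mathcal{E}^1(X,\omega)$ with limit $u\in\mathcal{E}^1$, fix $C>0$ and set $u_i^C:=\max(u_i,-C)$ and $u^C:=\max(u,-C)=\sup(u,-C)$, which are bounded $\omega$-psh functions with $u_i^C\searrow u^C$ as $i\to\infty$. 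The non-pluripolar measure $MA(u)$ is, by definition, the increasing limit of $\mathbf 1_{\{u>-C\}}MA(u^C)$, and likewise for each $u_i$; this is exactly how $MA(\cdot)$ was introduced in \cite{begz} for finite-energy potentials.

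For the convergence $\mathcal{E}_\omega(u_i)\to\mathcal{E}_\omega(u)$ I would simply invoke Proposition~\ref{pro:energy is ups}, which already asserts that $\mathcal{E}_\omega$ is continuous along decreasing sequences in $PSH(X,\omega)$; since $u\in\mathcal{E}^1$, the limiting value $\mathcal{E}_\omega(u)$ is finite, and monotonicity of $\mathcal{E}_\omega$ gives that $\mathcal{E}_\omega(u_i)\downarrow\mathcal{E}_\omega(u)>-\infty$. This also furnishes the crucial uniform control: the total masses $\int_X MA(u_i)=1$ are fixed, and the truncated energies $\mathcal{E}_\omega(u_i^C)$ are sandwiched between $\mathcal{E}_\omega(u_i)$ and $\mathcal{E}_\omega(0)=0$, which will be used to show the pluripolar-type error terms are uniformly small.

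Next I would establish the weak convergence $MA(u_i)\to MA(u)$. Test against a fixed $\varphi\in C^0(X)$. The strategy is the standard two-parameter estimate
\begin{equation*}
\left|\int_X\varphi\,MA(u_i)-\int_X\varphi\,MA(u)\right|\le
\left|\int_X\varphi\bigl(MA(u_i)-MA(u_i^C)\bigr)\right|
+\left|\int_X\varphi\bigl(MA(u_i^C)-MA(u^C)\bigr)\right|
+\left|\int_X\varphi\bigl(MA(u^C)-MA(u)\bigr)\right|.
\end{equation*}
The middle term tends to $0$ as $i\to\infty$ for each \emph{fixed} $C$ by the bounded case above. The first and third terms are bounded by $\|\varphi\|_\infty$ times the mass of $MA(u_i)$ (resp. $MA(u)$) carried on $\{u_i\le -C\}$ (resp. $\{u\le -C\}$). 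The heart of the matter, and the step I expect to be the main obstacle, is to show this excess mass is small \emph{uniformly in $i$} as $C\to\infty$: one needs a quantitative comparison of the form
\begin{equation*}
\int_{\{u_i\le -C\}}MA(u_i)\ \le\ \frac{\text{const}}{C}\bigl(\mathcal{E}_\omega(0)-\mathcal{E}_\omega(u_i)\bigr)\ \le\ \frac{\text{const}}{C}\bigl(-\mathcal{E}_\omega(u)\bigr),
\end{equation*}
which is precisely a Chebyshev-type/capacity estimate available in the $\mathcal{E}^1$ theory of \cite{begz} (it follows from $\int_X(-u_i)MA(u_i)<\infty$ with a bound in terms of $\mathcal{E}_\omega(u_i)$, controlled uniformly since $u_i\ge u$). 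Granting this, choose $C$ large to kill the outer terms uniformly in $i$, then let $i\to\infty$ — which proves $MA(u_i)\to MA(u)$.

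Finally, for $u_iMA(u_i)\to uMA(u)$ I would run the same three-term decomposition with $\varphi$ replaced (formally) by the weights $u_i$, $u$. Here an extra subtlety arises because the integrand is itself unbounded and varies with $i$; I would handle it by writing $u_iMA(u_i)-uMA(u)=(u_i-u^C)MA(u_i)+u^C(MA(u_i)-MA(u^C))+\cdots$ and again controlling the tail contributions $\int_{\{u_i\le -C\}}(-u_i)MA(u_i)$ uniformly in $i$ by the uniform energy bound (this is the quantity whose finiteness \emph{defines} $\mathcal{E}^1$, and it is monotone in the potential, hence bounded by its value at $u$). The bounded-case convergence $u_i^CMA(u_i^C)\to u^CMA(u^C)$ for fixed $C$ is again Bedford--Taylor. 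Assembling the pieces gives all three claimed convergences. The only genuinely delicate point, as noted, is the uniform-in-$i$ tail estimate; everything else is bookkeeping on top of the bounded theory and Proposition~\ref{pro:energy is ups}.
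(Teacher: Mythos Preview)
The paper does not give its own proof of this proposition: it is stated as a result imported from \cite{begz} (and, behind that, from Guedj--Zeriahi's finite-energy theory \cite{g-z2}). So there is no ``paper's own proof'' to compare against; your sketch is in fact supplying what the paper outsources.

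Your approach---reduce to Bedford--Taylor continuity for the bounded truncations $u_i^C=\max(u_i,-C)$, then control the tails uniformly in $i$ using the uniform energy bound $\mathcal{E}_\omega(u_i)\ge\mathcal{E}_\omega(u)>-\infty$---is exactly the strategy employed in those references, and it is correct. One small inaccuracy: the quantity $\int_{\{u_i\le -C\}}(-u_i)\,MA(u_i)$ is not literally ``monotone in the potential'' as you write; rather, the uniform smallness of these tails comes from the fundamental comparison inequalities of the $\mathcal{E}^1$ theory (e.g.\ for $u\le v\le 0$ one has $\int_X(-v)\,\omega_v^n\le 2^n\int_X(-u)\,\omega_u^n$, cf.\ \cite{g-z2}), which bound all the $\int_X(-u_i)\,MA(u_i)$ by a fixed multiple of $\int_X(-u)\,MA(u)$, and then a uniform-integrability argument gives the tail decay. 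With that correction, the argument goes through as you describe.
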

In particular, by the previous proposition we could as well have defined
$MA(u)$ for $u\in\mathcal{E}^{1}(X,\omega)$ as the limit of the
volume forms $MA(u_{j})$ with $u_{j}\in\mathcal{H}(X,\omega)$ any
sequence decreasing to $u$ (using Prop \ref{pro:(Demailly-ref).-The}).

\subsection{\label{sub:The-pluricomplex-energy}The pluricomplex energy $E(\mu)$
and potentials of measures}

Following \cite{begz} we define the (pluricomplex) energy by 
\begin{equation}
E(\mu):=\sup_{u\in PSH(X,\omega)}\mathcal{E}_{\omega}(u)-\left\langle u,\mu\right\rangle \label{eq:def of e as sup}
\end{equation}
 if $\mu\in\mathcal{M}_{1}(X).$ It will also be useful to extend
$E$ to all of the vector space $\mathcal{M}(X)$ by letting $E(\mu)=\infty$
on $\mathcal{M}(X)-\mathcal{M}_{1}(X).$ We will denote the subspace
of all finite energy probability measures by 
\[
E_{1}(X,\omega):=\{\mu:\, E(\mu)<\infty\}
\]

By Propositions \ref{pro:non-pluripol ma} and \ref{pro:(Demailly-ref).-The}
it is enough to take the sup over the subspace $C^{0}(X)\cap PSH(X,\omega)$
or even over the space $\mathcal{H}(X,\omega)$ of Kähler potentials.
But one point of working with less regular functions is that the sup
can be attained. Indeed, as recalled in the following theorem 
\begin{equation}
E(\mu):=\mathcal{E}_{\omega}(u_{\mu})-\left\langle u_{\mu},\mu\right\rangle \label{eq:energy in terms of pot}
\end{equation}
 for a unique function $u_{\mu}\in\mathcal{E}^{1}(X,\omega)/\R$ of
$\mu$ if $E(\mu)<\infty$ where 
\begin{equation}
MA(u_{\mu})=\mu.\label{eq:potential of meas}
\end{equation}
We will refer to a solution $u_{\mu}$ of the previous equation is
a \emph{potential} of $\mu$ (this is a somewhat non-standard terminology
as potentials usually are associated with closed $(1,1)-$currents,
rather then measures). 
\begin{thm}
\label{thm:var sol of ma}\cite{bbgz}The following is equivalent
for a probability measure $\mu$ on $X:$ 
\begin{itemize}
\item $E(\mu)<\infty$ 
\item \textup{$\left\langle u,\mu\right\rangle <\infty$ for all $u\in\mathcal{E}^{1}(X,\omega)$} 
\item $\mu$ has a potential $u_{\mu}\in\mathcal{E}(X,\omega$), i.e. equation
\ref{eq:potential of meas} holds 
\end{itemize}
Moreover, $u_{\mu}$ is a maximizer of the functional $\mathcal{E}_{\omega}-\left\langle \cdot,\mu\right\rangle $
and if $u_{j}$ is any sequence in $\mathcal{E}^{1}(X,\omega)$ such
that $\sup_{X}u_{j}=0$ and 
\[
\liminf_{j}\mathcal{E}_{\omega}(u_{j})-\left\langle u_{j},\mu\right\rangle \geq E(\mu)
\]
 then $u_{j}\rightarrow u_{\mu}$ where $u_{\mu}$ is the unique potential
of $\mu$ such that $\sup_{X}u_{\mu}=0$
\end{thm}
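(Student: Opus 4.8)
The plan is to follow the variational strategy of \cite{bbgz}. Write $\Phi_\mu(u):=\mathcal{E}_\omega(u)-\langle u,\mu\rangle$ on $PSH(X,\omega)$, so that $\sup\Phi_\mu=E(\mu)$ by \ref{eq:def of e as sup}, and label the three conditions (i) $E(\mu)<\infty$; (ii) $\langle u,\mu\rangle>-\infty$ for every $u\in\mathcal{E}^1(X,\omega)$; (iii) $\mu$ has a potential $u_\mu\in\mathcal{E}^1(X,\omega)$, i.e.\ $MA(u_\mu)=\mu$. The scheme is (i)$\Rightarrow$(iii)$\Rightarrow$(i), (i)$\Rightarrow$(ii), (ii)$\Rightarrow$(i), with the maximiser and convergence statements read off along the way. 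Since $\mathcal{E}_\omega(u+c)=\mathcal{E}_\omega(u)+c$ and $\mu(X)=1$, the functional $\Phi_\mu$ is translation-invariant, so $E(\mu)$ is computed over the compact set $\mathcal{T}:=\{u\in PSH(X,\omega):\sup_Xu=0\}$ (compactness of $\mathcal{T}$ being that of $PSH(X,\omega)/\R$). Moreover, writing any $u\in\mathcal{E}^1(X,\omega)$ as a decreasing limit of $u_k\in\mathcal{H}(X,\omega)$ (Prop.~\ref{pro:(Demailly-ref).-The}) and using $\mathcal{E}_\omega(u_k)\to\mathcal{E}_\omega(u)$ (Prop.~\ref{pro:non-pluripol ma}) and $\langle u_k,\mu\rangle\downarrow\langle u,\mu\rangle$ (monotone convergence), one gets $\Phi_\mu(u)=\lim_k\Phi_\mu(u_k)\le E(\mu)$; hence if $E(\mu)<\infty$ then $\langle u,\mu\rangle\ge\mathcal{E}_\omega(u)-E(\mu)>-\infty$, which is (i)$\Rightarrow$(ii).

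Assume now $E(\mu)<\infty$ and let $u_j\in\mathcal{T}$ be a maximising sequence for $\Phi_\mu$; by compactness we may assume $u_j\to u_\mu$ in $L^1$ and a.e., and $u_\mu\in\mathcal{T}$ since $\mathcal{T}$ is closed. One shows that $\Phi_\mu$ is upper semicontinuous along $(u_j)$: $\mathcal{E}_\omega$ is usc (Prop.~\ref{pro:energy is ups}), but $u\mapsto\langle u,\mu\rangle$ is not weakly continuous in general, and the point is an \emph{energy estimate} for finite-energy measures bounding $|\langle u,\mu\rangle|$ by a sublinear function of $|\mathcal{E}_\omega(u)|$ (and of the energy of the potential of $\mu$). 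This first forces $\mathcal{E}_\omega(u_j)$ to stay bounded below (otherwise $\Phi_\mu(u_j)\to-\infty$), which gives equi-integrability of $(u_j)$ against $\mu$ and hence $\langle u_j,\mu\rangle\to\langle u_\mu,\mu\rangle$ — comparing along the decreasing regularisations $\tilde u_j:=(\sup_{k\ge j}u_k)^\ast\downarrow u_\mu$ and using continuity of the Monge--Amp\`ere operator along decreasing sequences (Prop.~\ref{pro:non-pluripol ma}). Then $\mathcal{E}_\omega(u_\mu)\ge\limsup_j\mathcal{E}_\omega(u_j)>-\infty$ by usc, so $u_\mu\in\mathcal{E}^1(X,\omega)$, and $\Phi_\mu(u_\mu)\ge\limsup_j\Phi_\mu(u_j)=E(\mu)$, so $u_\mu$ is a maximiser. \emph{This semicontinuity/equi-integrability step is the one I expect to be the main obstacle}: it is precisely where finiteness of $E(\mu)$ is genuinely used, and it relies on the quantitative pluripotential theory of \cite{begz,bbgz}.

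To see $MA(u_\mu)=\mu$ we perturb the maximiser by envelopes. For $f\in C^0(X)$ put $P(v):=(\sup\{w\in PSH(X,\omega):w\le v\})^\ast$; then $P(u_\mu+tf)\in\mathcal{E}^1(X,\omega)\subset PSH(X,\omega)$ as it dominates $u_\mu-|t|\,\|f\|_\infty$, and from $P(v)\le v$ and the maximality of $u_\mu$ one has $h(t):=\mathcal{E}_\omega(P(u_\mu+tf))-\langle u_\mu+tf,\mu\rangle\le\Phi_\mu(P(u_\mu+tf))\le\Phi_\mu(u_\mu)=h(0)$, so $h$ attains its maximum at $t=0$. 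Using the differentiability of the projected energy $t\mapsto\mathcal{E}_\omega(P(u_\mu+tf))$ together with the orthogonality relation that $MA(P(u_\mu+tf))$ is carried by the contact set $\{P(u_\mu+tf)=u_\mu+tf\}$ — so its derivative at $t=0$ is $\int_Xf\,MA(u_\mu)$ — we get $0=h'(0)=\int_Xf\,MA(u_\mu)-\int_Xf\,\mu$ for all $f\in C^0(X)$, i.e.\ $MA(u_\mu)=\mu$. This gives (i)$\Rightarrow$(iii). Conversely, if $u_\mu\in\mathcal{E}^1(X,\omega)$ satisfies $MA(u_\mu)=\mu$, the tangent inequality $\mathcal{E}_\omega(v)-\mathcal{E}_\omega(u_\mu)\le\int_X(v-u_\mu)\,MA(u_\mu)$ — valid on $PSH(X,\omega)$ by approximation from Prop.~\ref{pro:of energy}(iii) — reads $\Phi_\mu(v)\le\mathcal{E}_\omega(u_\mu)-\int_Xu_\mu\,MA(u_\mu)=\Phi_\mu(u_\mu)$ for all $v$; the right-hand side is finite because $u_\mu\in\mathcal{E}^1(X,\omega)$, so $E(\mu)<\infty$ and $u_\mu$ is the maximiser. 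This proves (iii)$\Rightarrow$(i) and the characterisation of $u_\mu$; and (ii)$\Rightarrow$(i) follows from a Baire-category/uniform-boundedness argument applied to the finite affine functional $u\mapsto\langle u,\mu\rangle$ on the complete metric space $\mathcal{E}^1(X,\omega)/\R$, cf.\ \cite{bbgz}.

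It remains to treat uniqueness and the convergence statement. If $u_1,u_2\in\mathcal{E}^1(X,\omega)$ both solve $MA(\cdot)=\mu$, then both maximise the concave functional $\Phi_\mu$; along the segment $u_t:=(1-t)u_1+tu_2\in\mathcal{E}^1(X,\omega)$ the function $t\mapsto\Phi_\mu(u_t)$ is concave and trapped between $E(\mu)$ and $E(\mu)$, hence affine, and subtracting the affine term $-\langle u_t,\mu\rangle$ shows $t\mapsto\mathcal{E}_\omega(u_t)$ is affine; since its second derivative is a negative multiple of $\int_Xd(u_2-u_1)\wedge d^c(u_2-u_1)\wedge\omega_{u_t}^{n-1}$, which vanishes only when $u_2-u_1$ is constant, we get $u_1-u_2\in\R$ (alternatively, invoke the comparison principle of \cite{bl}). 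Finally, let $u_j\in\mathcal{E}^1(X,\omega)$ with $\sup_Xu_j=0$ and $\liminf_j\Phi_\mu(u_j)\ge E(\mu)$; since $\Phi_\mu\le E(\mu)$ this means $\Phi_\mu(u_j)\to E(\mu)$. Any subsequence has, by compactness of $\mathcal{T}$, an $L^1$-convergent sub-subsequence $u_{j_k}\to v\in\mathcal{T}$; the semicontinuity of the second paragraph gives $\Phi_\mu(v)=E(\mu)$, so $MA(v)=\mu$ and hence $v=u_\mu+c$ by uniqueness, while $\sup_Xv=0=\sup_Xu_\mu$ forces $c=0$. As this holds for every subsequence, $u_j\to u_\mu$ in $L^1$.
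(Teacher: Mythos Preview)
The paper does not give its own proof of this theorem: it is stated with attribution to \cite{bbgz} and followed only by the remark that ``the previous theorem was proved in \cite{bbgz} using the variational approach in the more general setting of a big class $[\omega]$.'' Your proposal is a faithful reconstruction of that variational approach, and the overall architecture (existence of a maximiser via compactness and usc, identification of the Euler--Lagrange equation through the projection $P$, uniqueness via strict concavity of $\mathcal{E}_\omega$ along affine segments) is correct.

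Two places deserve tightening. First, you correctly flag the semicontinuity/equi-integrability step as the crux; what is actually used in \cite{bbgz} is the energy estimate $|\langle u,\mu\rangle|\le C_\mu(-\mathcal{E}_\omega(u))^{1/2}$ on the slice $\sup_X u=0$ (quoted in this paper as inequality \eqref{eq:bound on integr pairing}), which both forces $\mathcal{E}_\omega(u_j)$ to stay bounded and gives continuity of $u\mapsto\langle u,\mu\rangle$ on energy-bounded subsets. Second, the differentiability of $t\mapsto\mathcal{E}_\omega(P(u_\mu+tf))$ is stated in the paper (Theorem of B.--Boucksom) only for $u\in C^0(X)$; extending it to $u_\mu\in\mathcal{E}^1(X,\omega)$ requires the approximation argument alluded to in this paper's proof of Theorem~\ref{thm:existence etc when beta pos} (``and an approximation argument \cite{bbgz}''), so you should say explicitly that one approximates $u_\mu$ from above by continuous $\omega$-psh functions and passes to the limit. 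Your sketch of (ii)$\Rightarrow$(i) via Baire category is plausible but not the route taken in \cite{bbgz}; there the implication goes through showing that $\mu$ does not charge pluripolar sets and then constructing a finite-energy potential, so if you want to cite \cite{bbgz} you should align with that.
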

The previous theorem was proved in \cite{bbgz} using the variational
approach in the more general setting of a big class $[\omega].$ In
the case when $\mu$ is a volume form the Calabi-Yau theorem \cite{y}
furnishes a unique \emph{smooth} potential $u_{\mu}$ as above (using
the continuity method).

We will next prove a dual version of \ref{eq:def prop of energy on psh}
which is a new result in the general non-smooth setting. If the functional
$\left\langle \mu,\cdot\right\rangle $ were lsc on all of $\mathcal{E}^{1}(X,\omega)$
then the proposition below would essentially be a consequence of the
extremal property of $u_{\mu}$ given by Theorem \ref{thm:var sol of ma}
combined with a dual version of the converse of Lemma \ref{lem:maxim is differ}
on $\mathcal{M}(X).$ 
\begin{prop}
\label{pro:deriv of E}Let $\mu^{t}=\mu^{0}+t\nu$ be a segment in
$E_{1}(X,\omega):=\{E<\infty\}$ where $t\in]-\epsilon,\epsilon[$
for some $\epsilon>0.$ Then 
\begin{equation}
\frac{dE(\mu^{t})}{dt}_{|t=t_{0}}=-\int_{X}u_{\mu^{t_{0}}}\nu,\label{eq:der of E}
\end{equation}
 where $u_{\mu_{t}}$ is the potential of $\mu$ (which is unique
mod $\R).$ Moreover, for any two elements $\mu^{1}$ and $\mu^{0}$
of $E_{1}(X,\omega)$ we have 
\begin{equation}
E(\mu^{1})\geq E(\mu^{0})+\int_{X}(-u_{\mu^{0}})(\mu^{1}-\mu^{0}),\label{eq:conv ineq for E}
\end{equation}
 \end{prop}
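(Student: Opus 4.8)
The plan is to deduce both formulas from the variational characterization of $u_\mu$ in Theorem \ref{thm:var sol of ma} together with the concavity of $\mathcal{E}_\omega$, essentially running the argument of Lemma \ref{lem:maxim is differ} in the dual direction. First I would establish the convexity inequality \ref{eq:conv ineq for E}, since the derivative formula \ref{eq:der of E} will then follow from it by a standard one-variable argument. For \ref{eq:conv ineq for E}: by \ref{eq:energy in terms of pot} applied to $\mu^1$ we have $E(\mu^1)=\mathcal{E}_\omega(u_{\mu^1})-\langle u_{\mu^1},\mu^1\rangle$, and by the extremal (sup-defining) property in \ref{eq:def of e as sup} we also have, for the competitor $u_{\mu^0}$,
\[
E(\mu^1)\geq \mathcal{E}_\omega(u_{\mu^0})-\langle u_{\mu^0},\mu^1\rangle.
\]
Now subtract $E(\mu^0)=\mathcal{E}_\omega(u_{\mu^0})-\langle u_{\mu^0},\mu^0\rangle$ from this: the $\mathcal{E}_\omega(u_{\mu^0})$ terms cancel and one is left precisely with $E(\mu^1)-E(\mu^0)\geq \langle u_{\mu^0},\mu^0\rangle-\langle u_{\mu^0},\mu^1\rangle=\int_X(-u_{\mu^0})(\mu^1-\mu^0)$, which is \ref{eq:conv ineq for E}. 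Note that for this step I must know $\langle u_{\mu^0},\mu^1\rangle$ is finite, which is exactly the middle bullet of Theorem \ref{thm:var sol of ma} (a finite-energy measure pairs finitely with any $\mathcal{E}^1$ potential); and I should be careful that $u_{\mu^0}$ is only defined mod $\R$, but both sides of \ref{eq:conv ineq for E} are invariant under $u_{\mu^0}\mapsto u_{\mu^0}+c$ since $\mu^1-\mu^0$ has total mass zero, so the statement is well-posed.

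For the derivative formula \ref{eq:der of E}, apply \ref{eq:conv ineq for E} with the roles of $\mu^0,\mu^1$ taken by $\mu^{t_0}$ and $\mu^{t_0+s}=\mu^{t_0}+s\nu$, in both orders:
\[
E(\mu^{t_0+s})-E(\mu^{t_0})\geq -s\int_X u_{\mu^{t_0}}\nu,
\qquad
E(\mu^{t_0})-E(\mu^{t_0+s})\geq s\int_X u_{\mu^{t_0+s}}\nu.
\]
Dividing the first by $s>0$ gives $\frac{E(\mu^{t_0+s})-E(\mu^{t_0})}{s}\geq -\int_X u_{\mu^{t_0}}\nu$, and dividing by $s<0$ reverses the inequality; so the lower and upper Dini derivatives of $t\mapsto E(\mu^t)$ at $t_0$ straddle $-\int_X u_{\mu^{t_0}}\nu$. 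To conclude equality I need the second inequality to pinch things, i.e. I need $\int_X u_{\mu^{t_0+s}}\nu\to\int_X u_{\mu^{t_0}}\nu$ as $s\to 0$. This is where the convergence statement in Theorem \ref{thm:var sol of ma} enters: normalizing $\sup_X u_{\mu^{t_0+s}}=0$, one checks that $\mu^{t_0+s}\to\mu^{t_0}$ weakly forces the asymptotic-maximizer hypothesis of that theorem to hold along the family, hence $u_{\mu^{t_0+s}}\to u_{\mu^{t_0}}$ in $L^1$; combined with a uniform bound on the energies $\mathcal{E}_\omega(u_{\mu^{t_0+s}})$ (which follows from convexity of $E$ along the segment, giving an a priori bound on $E(\mu^{t_0+s})$ for $s$ in a compact subinterval) this upgrades to convergence of the pairings $\int_X u_{\mu^{t_0+s}}\nu$.

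The main obstacle I anticipate is precisely this last continuity point: $\langle\,\cdot\,,\nu\rangle$ is not weakly continuous on all of $\mathcal{E}^1(X,\omega)$ (it is only lower semicontinuous, as the author flags in the remark preceding the proposition), so I cannot simply pass to the limit in $\int_X u_{\mu^{t_0+s}}\nu$ from $L^1$-convergence alone. The remedy is to exploit the finite-energy structure: $\nu$ itself has finite energy, so write $\nu=\frac12(\mu^{t_0+s}-\mu^{t_0})/s$-type comparisons or, more robustly, use that on $\{E\le M\}$ the pairing with a finite-energy measure is continuous along decreasing sequences (Prop \ref{pro:non-pluripol ma}) and that the normalized potentials $u_{\mu^{t_0+s}}$, being uniformly bounded above by $0$ and with uniformly bounded energy, are relatively compact with all limit points equal to $u_{\mu^{t_0}}$ — so the desired convergence of pairings holds for the whole family. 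Once \ref{eq:der of E} is proved at every $t_0$ in the open interval, the two displayed squeeze inequalities also show $t\mapsto E(\mu^t)$ is differentiable with the stated derivative, completing the proof.
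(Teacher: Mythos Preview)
Your proposal is correct and in fact organizes the argument more cleanly than the paper does. The paper proves the derivative formula \eqref{eq:der of E} first (via the decomposition \eqref{eq:beginning pf of deriv of E} and the continuity step you call ``the main obstacle'') and only afterwards deduces the subgradient inequality \eqref{eq:conv ineq for E} by evaluating the derivative at $t_0>0$ and letting $t_0\to 0$. You reverse the order: your one-line proof of \eqref{eq:conv ineq for E}---plugging the competitor $u_{\mu^0}$ into the sup defining $E(\mu^1)$ and subtracting $E(\mu^0)=\mathcal{E}_\omega(u_{\mu^0})-\langle u_{\mu^0},\mu^0\rangle$---is both simpler and logically prior, and then the derivative formula follows by the standard two-sided squeeze. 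The hard analytic content is the same in both approaches: the continuity $\langle u_{\mu^{t_0+s}},\nu\rangle\to\langle u_{\mu^{t_0}},\nu\rangle$, which the paper establishes as its ``Step one'' (uniform energy bound via the extremal property and the estimate $|\langle u,\mu\rangle|\le C_\mu(-\mathcal{E}(u))^{1/2}$, then the asymptotic-maximizer criterion of Theorem~\ref{thm:var sol of ma}, then the continuity of the pairing on energy-bounded sup-normalized sets from \cite{bbgz}). Two minor points to tighten in your write-up: the uniform bound on $\mathcal{E}_\omega(u_{\mu^{t_0+s}})$ does not follow from the bound on $E(\mu^{t_0+s})$ by convexity alone---you need Lemma~\ref{lem:energy as i minus j} (or equivalently the estimate \eqref{eq:bound on integr pairing}) to pass from $E(\mu)$ to $J(u_\mu)$; and Proposition~\ref{pro:non-pluripol ma} concerns decreasing sequences of potentials rather than the pairing continuity you need, which is the result from \cite{bbgz} invoked at the end of the paper's Step one.
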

\begin{proof}
Denote by $u^{t}$ the potential of $\mu^{t}$ normalized so that
$\sup u^{t}=0.$ Then 
\begin{equation}
\frac{1}{t}E\left((\mu^{t})-E(\mu^{0})\right)=\frac{1}{t}\left((\mathcal{E}_{\omega}(u^{t})-\left\langle u^{t},\mu^{0}\right\rangle )-(\mathcal{E}_{\omega}(u^{0})-\left\langle u^{0},\mu^{0}\right\rangle )\right)-\left\langle u^{t}-u^{0},\nu\right\rangle +\left\langle -u^{0},\nu\right\rangle \label{eq:beginning pf of deriv of E}
\end{equation}
 \emph{Step one:} $\left\langle u^{t}-u^{0},\nu\right\rangle \rightarrow0$
as $t\rightarrow0.$

First observe that there is a constant $C$ such that 
\[
\mbox{Claim\,1: }u^{t}\in\left\{ \mathcal{E}_{\omega}\geq-C\right\} \cap\{\sup_{X}=0\}
\]

Indeed, by the extremal property of $u^{t}$ we have $\mathcal{E}_{\omega}(u^{t})-(\left\langle u^{t},\mu^{0}\right\rangle +t\left\langle u^{t},\nu\right\rangle )=$
\[
\mathcal{E}_{\omega}(u^{t})-\left\langle u^{t},\mu^{t}\right\rangle \geq\mathcal{E}_{\omega}(u^{0})-\left\langle u^{0},\mu^{t}\right\rangle =C-t\left\langle u^{0},\nu\right\rangle \geq C''
\]
 Moreover, as shown in \cite{bbgz} (Prop 3.4), for any $\mu\in E_{1}(X,\omega)$
there is a constant $C_{\mu}$ such that 
\begin{equation}
|\left\langle u^{t},\mu\right\rangle |\leq C_{\mu}(-\mathcal{E}(u^{t}))^{1/2}\label{eq:bound on integr pairing}
\end{equation}
 if $u^{t}\in\left\{ \mathcal{E}_{\omega}>-\infty\right\} \cap\sup=0.$
Combining this latter inequality with the previous ones gives 
\[
\mathcal{E}_{\omega}(u^{t})\geq-C''-C'''(1+t)(-\mathcal{E}(u^{t}))^{1/2}
\]
 which proves the claim (since $t$ is bounded).

Next, we will prove the following 
\[
\mbox{Claim\,2:\,}\liminf_{t\rightarrow0}\mathcal{E}_{\omega}(u^{t})-\left\langle u^{t},\mu^{0}\right\rangle \geq\mathcal{E}_{\omega}(u^{0})-\left\langle u^{0},\mu^{0}\right\rangle 
\]
 As above, by the extremal property of $u^{t}$ it is enough to prove
that 
\[
\left\langle u^{t},\mu^{t}\right\rangle -\left\langle u^{t},\mu^{0}\right\rangle =t\left\langle u^{t},\nu\right\rangle \rightarrow0
\]
 as $t\rightarrow0.$ But this follows from the upper bound \ref{eq:bound on integr pairing}
combined with claim 1 above.

Now, Claim 2 combined with the last statement in Theorem \ref{thm:var sol of ma}
shows that $u^{t}\rightarrow u^{0}$ in $L^{1}(X,\omega^{n})$ when
$t\rightarrow0.$ As shown in \cite{bbgz} for any $\mu\in E_{1}(X,\omega)$
(and trivially also for the difference $\nu$ of elements in $E_{1}(X,\omega))$
the functional $\left\langle \cdot,\mu\right\rangle $ is continuous
wrt the $L^{1}-$topology on the subset in the Claim 1. This finishes
the proof of step one.

\emph{Step two: proof of formula \ref{eq:der of E}}

By concavity the function of $t$ inside the first bracket in the
rhs of \ref{eq:beginning pf of deriv of E} achieves its maximum on
$]-\epsilon,\epsilon[$ at the value $t=0$ and hence letting $t\rightarrow0^{+}$
gives 
\[
\frac{dE(\mu^{t})}{dt}_{t=0^{+}}\leq0+0-\left\langle -u^{0},\nu\right\rangle 
\]
 Similarly, 
\[
\frac{dE(\mu^{t})}{dt}_{t=0^{-}}\geq0+0-\left\langle -u^{0},\nu\right\rangle 
\]
 But by the convexity of $E(\mu^{t})$ we have $\frac{dE(\mu^{t})}{dt}_{t=0^{-}}\leq\frac{dE(\mu^{t})}{dt}_{t=0^{+}}$
which finally proves the equality \emph{\ref{eq:der of E}.}

\emph{Step three: proof of inequality \ref{eq:conv ineq for E}}

Let now $\mu_{t}$ be the affine segment, with $t\geq0,$ connecting
the given points $\mu^{0}$ and $\mu^{1}$ Combining the convexity
of $E(\mu^{t})$ and formula \emph{\ref{eq:der of E}} (evaluated
at $t=t_{0}>0)$ we have 
\[
E(\mu^{1})\geq E(\mu^{t_{0}})+\int_{X}(-u_{\mu^{t_{0}}})(\mu^{1}-\mu^{0})(1-t_{0})
\]
 and hence letting $t_{0}\rightarrow0$ and using step one above and
the fact that $E$ is lower semi-continuous finishes the proof of
the proposition. 
\end{proof}
Note that since the integral of $\nu$ vanishes the derivative above
is independent of the normalization of $u_{\mu}$.

Before continuing we note that $E(\mu)$ is \emph{not} (at least as
it stands) a Legendre transform of $\mathcal{E}_{\omega}(u)$ even
when restricted to $\mathcal{M}_{1}(X),$ because as explained above
the sup must be taken over the convex \emph{subspac}e $C^{0}(X)\cap PSH(X,\omega)$
of the vector space $C^{0}(X)$ In order to realize $E$ as a Legendre
transform we turn to the definition of the projection operator $P_{\omega}.$ 
\begin{rem}
\label{rem:legendre when n is one}When $n=1$ the sup referred to
above may actually by taking over all of $C^{0}(X).$ Indeed, as explained
above the extremizer $u_{\mu}$ a posteriori satisfies $\omega_{u_{\mu}}=\mu\geq0$
and hence $E$ is indeed the Legendre transform of $\mathcal{E}^{*}$
in the Riemann surface case. 
\end{rem}

\subsection{\label{sub:The-psh-projection}The psh projection $P$ and the formula
$E=(\mathcal{E}\circ P)^{*}$ }

Consider the following projection operator $P_{\omega}:\, C^{0}(X)\rightarrow C^{0}(X)\cap PSH(X,\omega)$
\[
P_{\omega}u:=\sup\left\{ v(x):\,\, v\in PSH(X,\omega),\,\,\, v\leq u\,\mbox{on\,\ensuremath{X}}\right\} 
\]
 (the lower semi-continuity of $P_{\omega}u$ follows from \ref{pro:(Demailly-ref).-The}
which allows us to write $P_{\omega}u$ as an upper envelope of continuous
functions and the upper semi-continuity is obtained by noting that
$P_{\omega}u$ is a candidate for the sup in its definition). One
of the main results in \cite{b-b} is the following 
\begin{thm}
(B.-Boucksom \cite{b-b}) The functional $\mathcal{E_{\omega}}\circ P_{\omega}$
is concave and Gateaux differentiable on $C^{0}(X).$ More precisely,
\[
d(\mathcal{E_{\omega}}\circ P_{\omega})_{|u}=MA(\mathcal{E_{\omega}}(P_{\omega}u))
\]

\end{thm}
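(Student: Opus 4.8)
The plan is to establish the two assertions --- concavity and Gateaux differentiability of $\mathcal{E}_{\omega}\circ P_{\omega}$ --- separately, the differentiability resting on the fact that the Monge--Amp\`ere measure $MA(P_{\omega}u)$ is carried by the \emph{contact set} $\{P_{\omega}u=u\}$. Concavity is the soft part. If $u_{0},u_{1}\in C^{0}(X)$ and $\lambda\in[0,1]$, then $\lambda P_{\omega}u_{0}+(1-\lambda)P_{\omega}u_{1}$ is $\omega$-psh and pointwise $\le\lambda u_{0}+(1-\lambda)u_{1}$, hence a competitor in the envelope defining $P_{\omega}$ of the convex combination; thus $P_{\omega}(\lambda u_{0}+(1-\lambda)u_{1})\ge\lambda P_{\omega}u_{0}+(1-\lambda)P_{\omega}u_{1}$. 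Since $\mathcal{E}_{\omega}$ is non-decreasing and concave (Prop \ref{pro:energy is ups}), applying it to both sides gives the concavity of $\mathcal{E}_{\omega}\circ P_{\omega}$ on $C^{0}(X)$; in particular the one-sided directional derivatives below exist as genuine limits, the difference quotients being monotone in $t$.

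The core analytic input is the contact-set property: for $u\in C^{0}(X)$ one has $MA(P_{\omega}u)=0$ on the open set $\{P_{\omega}u<u\}$. Here one uses that $P_{\omega}u$ is continuous --- it is usc (being its own competitor) and lsc, since Demailly's regularization (Prop \ref{pro:(Demailly-ref).-The}) exhibits it as an upper envelope of continuous $\omega$-psh functions. At a point $x$ with $P_{\omega}u(x)<u(x)$, continuity gives $P_{\omega}u<u$ on a small ball $B\ni x$, and one argues by a standard local balayage argument (Bedford--Taylor in the local model, adapted to the global $\omega$-psh setting in \cite{b-b}) that $P_{\omega}u$ is maximal on $B$, i.e. solves the homogeneous complex Monge--Amp\`ere equation there: if $MA(P_{\omega}u)$ charged a compact subset of $B$, solving the associated obstacle/Dirichlet problem on $B$ would produce an $\omega$-psh $w$ on $X$ with $w\ge P_{\omega}u$, $w\le u$, and $w>P_{\omega}u$ somewhere, contradicting the maximality of the envelope $P_{\omega}u$. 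Hence $MA(P_{\omega}u)$ is supported on $\{P_{\omega}u=u\}$. This is the main obstacle and the place where \cite{b-b} is really needed.

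Granting this, fix $u,v\in C^{0}(X)$ and write $g=\mathcal{E}_{\omega}\circ P_{\omega}$. For bounded $\omega$-psh $w_{0},w_{1}$, concavity of $\mathcal{E}_{\omega}$ together with its differential formula $d\mathcal{E}_{\omega|w_{0}}=MA(w_{0})$ (valid on bounded potentials by Prop \ref{pro:of energy} and Prop \ref{pro:non-pluripol ma}) yields $\mathcal{E}_{\omega}(w_{1})\le\mathcal{E}_{\omega}(w_{0})+\int_{X}(w_{1}-w_{0})\,MA(w_{0})$, obtained by integrating along the $\omega$-psh segment $(1-s)w_{0}+sw_{1}$ and using that $s\mapsto\int_{X}(w_{1}-w_{0})MA((1-s)w_{0}+sw_{1})$ is non-increasing. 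Apply this with $w_{0}=P_{\omega}u$, $w_{1}=P_{\omega}(u+tv)$, $t>0$: since $P_{\omega}(u+tv)\le u+tv$ everywhere and $P_{\omega}u=u$ on $\mathrm{supp}\,MA(P_{\omega}u)$, there $P_{\omega}(u+tv)-P_{\omega}u\le tv$, so $g(u+tv)-g(u)\le t\int_{X}v\,MA(P_{\omega}u)$; dividing by $t$ and letting $t\to0^{+}$ gives $\frac{d}{dt}g(u+tv)|_{t=0^{+}}\le\int_{X}v\,MA(P_{\omega}u)$. For the reverse, apply the estimate with $w_{0}=P_{\omega}(u+tv)$, $w_{1}=P_{\omega}u$: now $P_{\omega}u\le u=P_{\omega}(u+tv)-tv$ on $\mathrm{supp}\,MA(P_{\omega}(u+tv))$, giving $g(u+tv)-g(u)\ge t\int_{X}v\,MA(P_{\omega}(u+tv))$. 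Since $P_{\omega}(u\pm t\|v\|_{\infty})=P_{\omega}u\pm t\|v\|_{\infty}$, the operator $P_{\omega}$ is $1$-Lipschitz for $\|\cdot\|_{\infty}$, so $P_{\omega}(u+tv)\to P_{\omega}u$ uniformly as $t\to0^{+}$; by Bedford--Taylor continuity of $MA$ along uniformly convergent, uniformly bounded $\omega$-psh functions, $\int_{X}v\,MA(P_{\omega}(u+tv))\to\int_{X}v\,MA(P_{\omega}u)$, whence $\frac{d}{dt}g(u+tv)|_{t=0^{+}}\ge\int_{X}v\,MA(P_{\omega}u)$ too.

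Thus the right derivative of $g$ at $u$ in the direction $v$ equals $\int_{X}v\,MA(P_{\omega}u)=\langle MA(P_{\omega}u),v\rangle$. Replacing $v$ by $-v$ and noting that the right derivative of $g$ along $-v$ is the negative of its left derivative along $v$, the left and right derivatives at $u$ coincide; hence $\mathcal{E}_{\omega}\circ P_{\omega}$ is Gateaux differentiable with $d(\mathcal{E}_{\omega}\circ P_{\omega})_{|u}=MA(P_{\omega}u)$, as claimed.
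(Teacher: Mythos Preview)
Your proof is correct and follows essentially the same route as the one in \cite{b-b} that the paper cites: the paper itself does not reproduce the argument but singles out the orthogonality relation $\langle MA(P_{\omega}u),\,u-P_{\omega}u\rangle=0$ as ``the main ingredient'', and this is precisely your contact-set property (since $u-P_{\omega}u\ge0$ and $MA(P_{\omega}u)\ge0$, the vanishing of the pairing is equivalent to $MA(P_{\omega}u)$ being supported on $\{P_{\omega}u=u\}$). Your two-sided sandwich via the concavity inequality for $\mathcal{E}_{\omega}$, together with the $1$-Lipschitz property of $P_{\omega}$ and Bedford--Taylor continuity to pass to the limit in the lower bound, is exactly how the differentiability is obtained in \cite{b-b}.
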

The differentiability of the composed map $\mathcal{E_{\omega}}\circ P_{\omega}$
should be contrasted with the fact that the non-linear projection
$P_{\omega}$ is certainly not differentiable. The main ingredient
in the proof of the previous theorem is the following orthogonality
relation: 
\begin{equation}
\left\langle MA(Pu),(u-Pu)\right\rangle =0,\label{eq:orthog relation}
\end{equation}
 Note that it follows immediately from the fact that $Pu\leq u$ that
\begin{equation}
E=(\mathcal{E_{\omega}}\circ P_{\omega})^{*}\,\,\mbox{on\,\ensuremath{\mathcal{M}_{1}(X)}}\label{eq:e as leg on prob}
\end{equation}
 Moreover, by the previous theorem 
\begin{equation}
d(\mathcal{E_{\omega}}\circ P_{\omega})(C^{0}(X))\subset\mathcal{M}_{1}(X)\subset\mathcal{M}(X)\label{eq:diff is prob}
\end{equation}
In particular we obtain the flowing proposition (which is a slight
refinement of Theorem 5.3 in \cite{bbgz}): 
\begin{prop}
\label{pro:legendre duality for energies}The relation \ref{eq:e as leg on prob}
holds on all of the vector space $\mathcal{M}(X)$ of signed measures
on $X,$ i.e. 
\[
E=(\mathcal{E_{\omega}}\circ P_{\omega})^{*}\,\,\mbox{on\,\ensuremath{\mathcal{M}(X)}}
\]
 and dually 
\[
\mathcal{E_{\omega}}\circ P_{\omega}=E^{*}\,\,\mbox{on\,\ensuremath{\mathcal{C}^{0}(X)}}
\]
 \end{prop}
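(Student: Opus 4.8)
The plan is to deduce Proposition \ref{pro:legendre duality for energies} from the two facts \eqref{eq:e as leg on prob} and \eqref{eq:diff is prob} already recorded, together with the general Legendre duality relation \eqref{eq:dualiy} and Lemma \ref{lem:maxim is differ}. The only real content beyond the bookkeeping is that $E$ is defined as $+\infty$ off $\mathcal{M}_1(X)$, so one must check that extending the Legendre transform of $\mathcal{E}_\omega\circ P_\omega$ from $\mathcal{M}_1(X)$ to all signed measures $\mathcal{M}(X)$ does not produce any finite values outside $\mathcal{M}_1(X)$.

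**Step 1: the transform of $\mathcal{E}_\omega\circ P_\omega$ is supported on probability measures.**
Write $\Lambda:=\mathcal{E}_\omega\circ P_\omega$, which by the B.--Boucksom theorem is finite, concave and Gateaux differentiable on $C^0(X)$, with $d\Lambda_{|u}=MA(P_\omega u)\in\mathcal{M}_1(X)$ by \eqref{eq:diff is prob}. First I would show that $\Lambda^*(\mu)=+\infty$ whenever $\mu\in\mathcal{M}(X)\setminus\mathcal{M}_1(X)$. Indeed, if $\mu$ is not a positive measure, pick $v\in C^0(X)$ with $v\ge 0$ and $\langle v,\mu\rangle<0$; since $\Lambda(u-tv)\ge\Lambda(u)$ fails to decrease fast enough — more simply, use that $P_\omega$ is translation-equivariant, $P_\omega(u+c)=P_\omega u+c$, so $\Lambda(u+c)=\Lambda(u)+c$, hence $\Lambda(u+c)-\langle u+c,\mu\rangle=\Lambda(u)-\langle u,\mu\rangle+c(1-\mu(X))$, which is unbounded in $c\in\R$ unless $\mu(X)=1$. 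If $\mu(X)=1$ but $\mu$ has a negative part, test against $u=-tv$ with $v\ge0$, $v\in C^0(X)\cap PSH(X,\omega)$ after adding a large constant (or just a nonnegative continuous $v$ and use $P_\omega(-tv)\ge -tv - C$): $\Lambda(-tv)-\langle -tv,\mu\rangle$ grows like $t\langle v,\mu\rangle\to+\infty$ using $\langle v,\mu\rangle<0$ and the fact that $\Lambda(-tv)\ge -t\max v$. Thus $\Lambda^*=E=+\infty$ off $\mathcal{M}_1(X)$, so \eqref{eq:e as leg on prob} upgrades to the identity $E=\Lambda^*$ on all of $\mathcal{M}(X)$.

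**Step 2: the dual identity on $C^0(X)$.**
Now I would apply the biduality \eqref{eq:dualiy}: since $\Lambda=\mathcal{E}_\omega\circ P_\omega$ is concave and (being finite and Gateaux differentiable, hence continuous, on the Banach space $C^0(X)$) upper semicontinuous, we have $\Lambda=(\Lambda^*)^*$. Combining with Step 1, $(\Lambda^*)^*=E^*$, so $\mathcal{E}_\omega\circ P_\omega=E^*$ on $C^0(X)$, which is exactly the second displayed equation of the proposition. The first is Step 1. (Alternatively, one can read the dual identity directly off Lemma \ref{lem:maxim is differ} applied to $\Lambda$: for fixed $u$, $\inf_{\mu\in\mathcal{M}(X)}(E(\mu)+\langle u,\mu\rangle)$ is attained at $d\Lambda_{|u}=MA(P_\omega u)$ and equals $\Lambda(u)$ — but one still needs Step 1 to know $E=\Lambda^*$, so biduality is the cleaner route.)

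**Main obstacle.**
The genuinely non-formal point is Step 1, the verification that the unrestricted Legendre transform $\Lambda^*$ still vanishes (i.e.\ is $+\infty$) outside $\mathcal{M}_1(X)$; this is where the special structure of $P_\omega$ — its order-preservation $P_\omega u\le u$, its translation-equivariance, and the fact that $\mathcal{E}_\omega$ is monotone — gets used, and it is the reason the naive identity $E=(\mathcal{E}_\omega)^*$ fails (as flagged in Remark \ref{rem:legendre when n is one}: it holds as stated only when $n=1$). Everything else is abstract convex-duality machinery already in place in the excerpt.
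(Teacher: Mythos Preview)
Your overall strategy is sound but differs from the paper's, and your Step~1 contains a sign slip in the positivity case that makes the argument as written incorrect.

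\textbf{The sign error.} When $\mu(X)=1$ but $\mu$ has a negative part, you test with $u=-tv$, $v\ge 0$, $\langle v,\mu\rangle<0$, and claim $\Lambda(-tv)-\langle -tv,\mu\rangle$ grows like $t\langle v,\mu\rangle\to+\infty$. But $-\langle -tv,\mu\rangle=t\langle v,\mu\rangle\to -\infty$, and your lower bound $\Lambda(-tv)\ge -t\max v$ only makes things worse. The fix is simply to take $u=+tv$ instead: since $tv\ge 0$ and the constant $0$ is $\omega$-psh with $0\le tv$, we have $P_\omega(tv)\ge 0$, hence $\Lambda(tv)\ge\mathcal{E}_\omega(0)=0$, and then
\[
\Lambda(tv)-\langle tv,\mu\rangle\ \ge\ -t\langle v,\mu\rangle\ \longrightarrow\ +\infty.
\]
With this correction your Step~1 goes through, and Step~2 is fine.

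\textbf{Comparison with the paper.} The paper does \emph{not} verify $\Lambda^*=+\infty$ off $\mathcal{M}_1(X)$ by hand. Instead it first computes $E^*$: since $E=+\infty$ off $\mathcal{M}_1(X)$ by definition, $E^*(u)=\inf_{\mathcal{M}_1(X)}(E+\langle u,\cdot\rangle)=\inf_{\mathcal{M}_1(X)}(\Lambda^*+\langle u,\cdot\rangle)$ using \eqref{eq:e as leg on prob}; then Lemma~\ref{lem:maxim is differ} together with \eqref{eq:diff is prob} says the global minimizer of $\mu\mapsto\Lambda^*(\mu)+\langle u,\mu\rangle$ over all of $\mathcal{M}(X)$ is $d\Lambda_{|u}\in\mathcal{M}_1(X)$, so the infimum over $\mathcal{M}_1(X)$ equals the infimum over $\mathcal{M}(X)$, which is $(\Lambda^*)^*(u)=\Lambda(u)$ by biduality. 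Thus $E^*=\Lambda$, and one more transform gives $E=\Lambda^*$. So your parenthetical remark that ``one still needs Step~1'' for this alternative is not correct: the paper's route bypasses your Step~1 entirely by exploiting that the infimum is \emph{attained} in $\mathcal{M}_1(X)$. Your route trades this use of Lemma~\ref{lem:maxim is differ} for the concrete structural facts about $P_\omega$ (translation-equivariance and monotonicity); both are short, but the paper's is slightly more economical.
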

\begin{proof}
Since by definition $E=\infty$ on $\mathcal{M}(X)-\mathcal{M}_{1}(X)$
we have for any $u\in C^{0}(X)$ 
\[
E^{*}(u):=\inf_{\mu\in\mathcal{M}(X)}(E(\mu)+\left\langle u,\mu\right\rangle )=\inf_{\mu\in\mathcal{M}_{1}(X)}(E(\mu)+\left\langle u,\mu\right\rangle ),
\]
 and hence the identity \ref{eq:e as leg on prob} combined with \ref{eq:diff is prob}
and Lemma \ref{lem:maxim is differ} (not using the uniqueness) gives,
with $\Lambda:=\mathcal{E}\circ P,$ 
\[
E^{*}(u):=\inf_{\mu\in\mathcal{M}_{1}(X)}(\Lambda^{*}(\mu)+\left\langle u,\mu\right\rangle )=\inf_{\mu\in\mathcal{M}(X)}(\Lambda^{*}(\mu)+\left\langle u,\mu\right\rangle )
\]
 Finally, by the duality relation \ref{eq:dualiy} this means that
$E^{*}(u)=(\Lambda^{*})$ and applying the Legendre transform again
also gives $E=\Lambda^{*}.$ 
\end{proof}
In particular, if follows immediately from the previous proposition
that 
\[
E^{*}=\mathcal{E_{\omega}}\,\,\mbox{on\,\ensuremath{\mathcal{C}^{0}(X)\cap PSH(X,\omega)}}
\]

\subsection{\emph{\label{sub:The-relative-entropy}}The relative entropy $D(\mu)$
and its Legendre transform $\mathcal{L}^{-}$}

The relative entropy $D_{\mu_{0}}(\mu):=D(\mu)$ wrt a fixed probability
measure $\mu_{0}$ is defined by 
\[
D(\mu):=\int_{X}\log(\mu/\mu_{0})\mu
\]
 when $\mu$ is absolutely continuous wrt $\mu_{0}$ and otherwise
$D(\mu):=\infty.$ As is well-known $D$ is the Legendre transform,
i.e. $D=\mathcal{L}^{*},$ of the following functional on $C^{0}(X):$
\[
\mathcal{L}_{\mu_{0}}(u):=-\log\int_{X}e^{u}\mu_{0}
\]
 (compare the proof of Lemma \ref{lem:sup of l attained}). More generally,
for any given parameter $\beta\in\R-\{0\}$ and measurable function
$u,$ 
\[
\mathcal{L}_{\mu_{0,\beta}}(u):=-\frac{1}{\beta}\log\int_{X}e^{\beta u}\mu_{0}
\]
 which in particular defines a functional on $C^{0}(X)$ which, by
Hölder's inequality is concave for $\beta>0$ and convex for $\beta<0.$
The following basic duality relation holds when $\beta>0$ (Lemma
6.2.13 in \cite{de-ze}): 
\[
\mathcal{L}_{\beta}^{*}(-\mu)=\frac{1}{\beta}D(\mu)
\]
 i.e. 
\[
\frac{1}{\beta}D(\mu)=\sup_{u\in C^{0}(X)}\left(-\frac{1}{\beta}\log\int_{X}e^{\beta u}\mu_{0}+\left\langle u,\mu\right\rangle \right)
\]
 Similarly, if $\beta=-\gamma$ with $\gamma>0$ then we have that
\[
\mathcal{L}_{\gamma}^{-}(u):=-\mathcal{L}_{\mu_{0,-\gamma}}(u):=-\frac{1}{\gamma}\log\int_{X}e^{-\gamma u}\mu_{0}
\]
 is a concave functional and by symmetry 
\[
\mathcal{L}_{\gamma}^{-}{}^{*}=\frac{1}{\gamma}D
\]
 i.e. 
\[
\frac{1}{\gamma}D(\mu)=\sup_{u\in C^{0}(X)}\left(-\frac{1}{\gamma}\log\int_{X}e^{-\gamma u}\mu_{0}-\left\langle u,\mu\right\rangle \right)
\]
 Note that on $C^{0}(X)$ it follows directly from the chain rule
that 
\[
d\mathcal{L}_{\gamma}^{-}=\frac{e^{-\gamma u}\mu_{0}}{\int_{X}e^{-\gamma u}\mu_{0}}
\]
 so that the image of $C^{0}(X)$ under $d\mathcal{L}_{\gamma}^{-}$
is the subspace of $\mathcal{M}_{1}(X)$ of all measures $\mu$ with
strictly positive continuous density wrt $\mu_{0}.$ However we will
need to calculate the derivatives with almost no regularity assumptions. 
\begin{prop}
\label{pro:deriv of D}Let $\mu^{t}=\mu^{0}+t\nu$ be a segment in
$\{D<\infty\}.$ Then
\[
\frac{dD(\mu^{t})}{dt}_{t=0^{+}}=\int_{X}\log(\mu^{0}/\mu_{0})\nu
\]
 if the right hand side above is finite. Similarly, let $u^{t}=u+tv$
be a segment in the space of all usc functions where $\mathcal{L}_{\gamma}^{-}(u)$
is finite. Then 
\[
\frac{d\mathcal{L}_{\gamma}^{-}(u^{t})}{dt}_{t=0^{+}}=\int_{X}\frac{ve^{-\gamma u}\mu_{0}}{\int_{X}e^{-\gamma u}\mu_{0}}
\]

\end{prop}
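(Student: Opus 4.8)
The plan is to prove both derivative formulas by the same mechanism: monotone convergence applied to the difference quotients, after isolating the sign of the integrand. For the entropy statement, write $\mu^{t}=\mu^{0}+t\nu$, so the density $f_{t}:=\mu^{t}/\mu_{0}$ is the affine function $f_{0}+t(\nu/\mu_{0})$ in $t$ (note $\nu$ is a difference of measures each absolutely continuous with respect to $\mu_{0}$, since both lie in $\{D<\infty\}$, so $\nu/\mu_{0}$ makes sense $\mu_{0}$-a.e.). The difference quotient is $\tfrac{1}{t}\bigl(\int f_{t}\log f_{t}\,\mu_{0}-\int f_{0}\log f_{0}\,\mu_{0}\bigr)$. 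The key structural fact is convexity of $s\mapsto s\log s$: for fixed $x$ the function $t\mapsto f_{t}(x)\log f_{t}(x)$ is convex, so the difference quotient $\tfrac1t\bigl(f_{t}(x)\log f_{t}(x)-f_{0}(x)\log f_{0}(x)\bigr)$ is nondecreasing in $t>0$ and decreases to $(1+\log f_{0}(x))\,\tfrac{\nu}{\mu_{0}}(x)$ as $t\to 0^{+}$. Since $\int \tfrac{\nu}{\mu_{0}}\,\mu_{0}=\int d\nu=0$, the term $\int 1\cdot\tfrac{\nu}{\mu_{0}}\,\mu_{0}$ drops and the claimed limit $\int\log(\mu^{0}/\mu_{0})\,\nu$ is exactly $\int(\log f_{0})\,\tfrac{\nu}{\mu_{0}}\,\mu_{0}$.

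First I would justify passing the limit through the integral. The integrand difference quotients are monotone decreasing in $t$, so by the monotone convergence theorem the integral of the limit equals the limit of the integrals \emph{provided} the family is bounded below by an integrable function; here a fixed $t_{0}\in(0,\epsilon)$ gives an upper bound $g_{t_{0}}(x):=\tfrac{1}{t_{0}}\bigl(f_{t_{0}}\log f_{t_{0}}-f_{0}\log f_{0}\bigr)$, whose integral is $\tfrac{1}{t_{0}}(D(\mu^{t_{0}})-D(\mu^{0}))<\infty$, and the lower bound is the limiting integrand, which is integrable precisely under the hypothesis that $\int\log(\mu^{0}/\mu_{0})\,\nu$ be finite. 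Then monotone convergence (applied to $g_{t_{0}}$ minus the difference quotients, an increasing nonnegative family) yields $\tfrac{d}{dt}\big|_{t=0^{+}}D(\mu^{t})=\int(1+\log f_{0})\tfrac{\nu}{\mu_{0}}\mu_{0}=\int\log(\mu^{0}/\mu_{0})\,\nu$.

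For the second formula the argument is formally dual and actually easier. With $u^{t}=u+tv$ one has $\int_{X}e^{-\gamma u^{t}}\mu_{0}=\int_{X}e^{-\gamma u}e^{-\gamma t v}\mu_{0}$, and $\mathcal{L}_{\gamma}^{-}(u^{t})=-\tfrac1\gamma\log\phi(t)$ with $\phi(t):=\int e^{-\gamma u}e^{-\gamma t v}\mu_{0}$. It suffices to differentiate $\phi$ at $t=0^{+}$: the integrand $e^{-\gamma u}e^{-\gamma t v}$ has difference quotient $e^{-\gamma u}\tfrac{e^{-\gamma t v}-1}{t}$, and for $t>0$ the elementary inequality shows $\tfrac{e^{-\gamma t v}-1}{t}$ is monotone in $t$ on $\{v\ge 0\}$ and on $\{v<0\}$ separately (convexity of $t\mapsto e^{-\gamma t v}$), decreasing to $-\gamma v$. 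Split the integral over $\{v\ge0\}$ and $\{v<0\}$, apply monotone convergence on each piece (using $\phi(t_{0})<\infty$ for the upper estimate and $\phi(0)=\int e^{-\gamma u}\mu_{0}<\infty$ since $\mathcal{L}_{\gamma}^{-}(u)$ is finite, for the other), conclude $\phi'(0^{+})=-\gamma\int v e^{-\gamma u}\mu_{0}$, and then the chain rule gives $\tfrac{d}{dt}\big|_{0^{+}}\mathcal{L}_{\gamma}^{-}(u^{t})=-\tfrac1\gamma\cdot\tfrac{\phi'(0^{+})}{\phi(0)}=\int v\, e^{-\gamma u}\mu_{0}\big/\int e^{-\gamma u}\mu_{0}$, as claimed.

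The main obstacle is the integrability bookkeeping in the entropy case — one has to be careful that $\nu/\mu_{0}$ is genuinely defined and that the limiting integrand $(1+\log f_{0})\tfrac{\nu}{\mu_{0}}$ is integrable, which is why the hypothesis ``if the right hand side is finite'' appears; everything else is an application of monotone convergence once the convexity-in-$t$ of the integrands (hence monotonicity of the difference quotients) has been recorded. No regularity beyond $\mu^{t}\in\{D<\infty\}$ (resp.\ finiteness of $\mathcal{L}_{\gamma}^{-}(u)$) is needed, which is the whole point of the proposition.
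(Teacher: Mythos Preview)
Your proof is correct and follows essentially the same route as the paper: both arguments exploit convexity of the integrand in $t$ to get monotonicity of the difference quotients and then invoke the monotone convergence theorem, using $\int_X\nu=0$ to kill the extra constant term. The only cosmetic difference is that the paper decomposes $f_t\log f_t-f_0\log f_0=f_0(\log f_t-\log f_0)+t(\nu/\mu_0)\log f_t$ and uses concavity of $\log$ on each piece, whereas you work directly with convexity of $s\mapsto s\log s$; for the $\mathcal{L}_\gamma^-$ part both proofs use convexity of the exponential in exactly the same way (the paper simply cites this as ``similar fashion, using that $x\mapsto e^x$ is convex'').
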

if the right hand side above is finite. 
\begin{proof}
By definition 
\[
\frac{1}{t}\left(D(\mu^{t})-D(\mu^{0})\right)=\int_{X}\frac{1}{t}(\log(\frac{\mu^{t}}{\mu_{0}})-\log(\frac{\mu^{0}}{\mu_{0}}))\mu^{0}+\int_{X}\log(\frac{\mu^{t}}{\mu_{0}})\nu
\]
 Since $x\mapsto\log x$ is monotone and convex with derivative $1/x$
when $x>0$ the integrands above are monotone in $t$ and hence the
monotone convergence theorem gives 
\[
\frac{dD(\mu^{t})}{dt}_{t=0^{+}}=\int_{X}\frac{\nu}{\mu^{0}}\mu^{0}+\int_{X}\log(\frac{\mu^{0}}{\mu_{0}})\nu
\]
 By assumption $\int_{X}\nu=0$ and hence the first term above vanishes
which proves the first formula in the proposition.

The second formula of the theorem is proved in a similar fashion now
using that $x\mapsto e^{x}$ is convex (exactly as in the proof of
Lemma 6.1 in \cite{bbgz}) 
\end{proof}
Now we can prove the following 
\begin{lem}
\label{lem:sup of l attained}Let $\mu$ be a finite energy measure
and assume that $u\in\mathcal{E}^{1}(X,\omega)$ with $\int_{X}e^{-\gamma u}\mu_{0}<\infty.$
Then
\begin{equation}
(\mathcal{L}_{\gamma}^{-})^{*}(\mu)=\log(-\frac{1}{\gamma}\int_{X}e^{-\gamma u}\mu_{0})-\left\langle u,\mu\right\rangle (:=\mathcal{N}(u))\label{eq:max for l}
\end{equation}
 iff 
\end{lem}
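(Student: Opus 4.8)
The plan is to deduce the equivalence from the non‑negativity of relative entropy together with the duality $(\mathcal{L}_{\gamma}^{-})^{*}=\frac{1}{\gamma}D$ recorded just above. Set $Z:=\int_{X}e^{-\gamma u}\mu_{0}$, which is finite by hypothesis and strictly positive because $u$, being $\omega$-psh, is bounded above, so $e^{-\gamma u}>0$ everywhere. Then $\nu:=e^{-\gamma u}\mu_{0}/Z$ is a probability measure which is mutually absolutely continuous with $\mu_{0}$, and $\mathcal{N}(u)=\mathcal{L}_{\gamma}^{-}(u)-\left\langle u,\mu\right\rangle =-\frac{1}{\gamma}\log Z-\left\langle u,\mu\right\rangle $. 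I would prove, and then exploit, the identity
\[
\frac{1}{\gamma}D(\mu)-\mathcal{N}(u)=\frac{1}{\gamma}D_{\nu}(\mu),\qquad D_{\nu}(\mu):=\int_{X}\log(\mu/\nu)\,\mu,
\]
understood as an equality in $[0,+\infty]$, whence $(\mathcal{L}_{\gamma}^{-})^{*}(\mu)=\mathcal{N}(u)$ if and only if $D_{\nu}(\mu)=0$ if and only if $\mu=\nu=e^{-\gamma u}\mu_{0}/\int_{X}e^{-\gamma u}\mu_{0}$, which is the asserted condition.

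First I would settle the integrability bookkeeping. Since $\mu$ has finite energy and $u\in\mathcal{E}^{1}(X,\omega)$, Theorem \ref{thm:var sol of ma} ensures $\left\langle u,\mu\right\rangle $ is finite (the content being $u\in L^{1}(\mu)$, as $u$ is automatically bounded above); hence $\log(\nu/\mu_{0})=-\gamma u-\log Z\in L^{1}(\mu)$ and $\mathcal{N}(u)\in\R$. Both $D(\mu)$ and $D_{\nu}(\mu)$ are well-defined in $[0,+\infty]$: if $\mu$ is not absolutely continuous with respect to $\mu_{0}$ (equivalently to $\nu$) they equal $+\infty$, and otherwise the negative part of $\log(\mu/\mu_{0})$ is $\mu$-integrable, since $\log t\ge 1-1/t$ gives $\int_{X}(\log(\mu/\mu_{0}))_{-}\,\mu\le\mu_{0}(\{\mu/\mu_{0}>0\})\le 1$, and similarly for $\nu$ in place of $\mu_{0}$.

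Granting this, the identity is a direct computation: when $\mu\ll\mu_{0}$ one has $\log(\mu/\mu_{0})=\log(\mu/\nu)+\log(\nu/\mu_{0})$ $\mu$-a.e., and integrating against $\mu$ — legitimate because the second summand lies in $L^{1}(\mu)$ — gives $D(\mu)=D_{\nu}(\mu)+\int_{X}(-\gamma u-\log Z)\,\mu=D_{\nu}(\mu)-\gamma\left\langle u,\mu\right\rangle -\log Z$; dividing by $\gamma$ and recalling the formula for $\mathcal{N}(u)$ yields the displayed identity, while in the non–absolutely–continuous case both sides are $+\infty$. Next I would invoke the Gibbs (Jensen) inequality: $D_{\nu}(\mu)\ge 0$, with equality if and only if $\mu=\nu$ — this is Jensen applied to the strictly convex $t\mapsto t\log t$ and $d\mu/d\nu$ against the probability measure $\nu$, equality forcing $d\mu/d\nu$ to coincide $\nu$-a.e. with its mean, which is $1$. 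Combining with $(\mathcal{L}_{\gamma}^{-})^{*}=\frac{1}{\gamma}D$ gives the equivalence stated above; it is also consistent in the excluded case $D(\mu)=+\infty$, for then $(\mathcal{L}_{\gamma}^{-})^{*}(\mu)=+\infty\neq\mathcal{N}(u)$ while $\mu=\nu$ would force $D(\mu)=-\gamma\left\langle u,\mu\right\rangle -\log Z<\infty$, so the characterizing condition fails as well.

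The step I expect to require the most care is the integrability bookkeeping that licenses splitting the logarithm and applying Jensen without meeting $\infty-\infty$; the essential input there is the finiteness of $\left\langle u,\mu\right\rangle $ from Theorem \ref{thm:var sol of ma}, which is exactly where the finite–energy hypothesis on $\mu$ is used. Everything else is formal manipulation of the definitions of $D$, $\mathcal{L}_{\gamma}^{-}$ and the Legendre transform, and a fully rigorous write-up would parallel the argument for Lemma 6.1 of \cite{bbgz} cited in the proof of Proposition \ref{pro:deriv of D}.
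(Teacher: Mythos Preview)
Your argument is correct and is a genuinely different route from the paper's. The paper proves the lemma by a direct variational argument: assuming $\mu=e^{-\gamma u}\mu_{0}/Z$, it differentiates $t\mapsto\mathcal{N}(u+tv)$ using Proposition~\ref{pro:deriv of D}, uses concavity to get $\mathcal{N}(u)\geq\mathcal{N}(u+v)$ for any continuous $v$, and then approximates an arbitrary $w\in C^{0}(X)$ by writing $w-u$ as an increasing limit of continuous functions to conclude $\mathcal{N}(u)\geq\sup_{C^{0}(X)}\mathcal{N}=(\mathcal{L}_{\gamma}^{-})^{*}(\mu)$; the converse direction is read off from the critical-point equation. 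Your approach instead invokes the duality $(\mathcal{L}_{\gamma}^{-})^{*}=\frac{1}{\gamma}D_{\mu_{0}}$ already recorded in Section~\ref{sub:The-relative-entropy} and reduces everything to the change-of-reference identity $\frac{1}{\gamma}D_{\mu_{0}}(\mu)-\mathcal{N}(u)=\frac{1}{\gamma}D_{\nu}(\mu)$ together with the Gibbs inequality $D_{\nu}(\mu)\geq 0$ with equality iff $\mu=\nu$. What you gain is a shorter, more conceptual proof that avoids the approximation step entirely and makes the thermodynamical content (minimizers of free energy are Gibbs measures) transparent; what the paper's proof buys is self-containment, since it does not rely on the cited identity $(\mathcal{L}_{\gamma}^{-})^{*}=\frac{1}{\gamma}D$ but works directly from the definition of the Legendre transform as a sup over $C^{0}(X)$. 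Your closing remark that a rigorous write-up would parallel Lemma~6.1 of \cite{bbgz} is slightly off --- that lemma is about differentiating exponential integrals, which your approach does not use; the only delicate point is the integrability bookkeeping you already handled via Theorem~\ref{thm:var sol of ma}.
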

\begin{equation}
\mu=\frac{e^{-\gamma u}\mu_{0}}{\int_{X}e^{-\gamma u}\mu_{0}}\label{eq:mu as exp}
\end{equation}

\begin{proof}
First note that by the assumptions on $u$ and $\mu$ both terms in
the definition of $\mathcal{N}(u)$ above are finite. Assume first
that $u$ satisfies \ref{eq:mu as exp}. If $v$ denotes a fixed continuous
function on $X$ and $u_{t}:=u+tv,$ then according to the previous
proposition 
\begin{equation}
\frac{d(\mathcal{N}(u_{t}))}{dt}_{t=0^{+}}=0\label{eq:vanish of der of n}
\end{equation}
 By concavity it follows that $\mathcal{N}(u)\geq\mathcal{N}(u+tv)$
for any $t\geq0$ and in particular for $t=1.$ Now take an arbitrary
function $w\in C^{0}(X)$ and write the lsc function $w-u$ as an
increasing limit of continuous functions $v_{j}.$ Since, as explained
above, 
\[
\mathcal{N}(u)\geq\mathcal{N}(u+v_{j})
\]
 letting $j\rightarrow\infty$ and using the monotone convergence
theorem gives 
\[
\mathcal{N}(u)\geq\sup_{w\in C_{0}(X)}\mathcal{N}(w):=(\mathcal{L}_{\gamma}^{-})^{*}(\mu)
\]
 Similarly, writing $u$ as a decreasing limit of continuous functions
$w_{j}$ and passing to the limit forces equality above.

Conversely, assume that $u$ satisfies \ref{eq:max for l} above.
Then it follows in particular (approximating as above) that the differentiable
function 
\[
t\mapsto\mathcal{N}(u_{t})
\]
 with $u_{t}$ as above attains its maximum at $t=0.$ Hence, the
critical point equation \ref{eq:vanish of der of n} holds and since
$v$ was arbitrary it follows by the formula in the previous proposition
that $u$ satisfies the relation \ref{eq:mu as exp}. 
\end{proof}

\subsection{\label{sub:Properness-and-coercivity}Properness and coercivity of
functionals}

The energy functional $E$ defines an exhaustion function on the space
$E_{1}(X,\omega)$ (i.e. the sets $\{E\geq-C\}$ are compact, since
$E$ is lsc, and their union is $E_{1}(X,\omega)).$ A functional
$F(\mu)$ on $E_{1}(X,\omega)$ is said to be \emph{proper (wrt energy)
}if it is proper with respect to the previous exhaustion, i.e. 
\[
E(\mu)\rightarrow\infty\implies F(\mu)\rightarrow\infty
\]
 and\emph{ coercive} (which is a stronger condition) if it there are
positive constants $a$ and $b$ such that 
\[
F\geq aE-b
\]
 Similarly, the functional $-\mathcal{E}_{\omega}$ defines an exhaustion
function on the space $\mathcal{E}^{1}(X,\omega)$ (it is indeed lsc
according to \ref{pro:energy is ups}). To get an exhaustion function
of $\mathcal{E}^{1}(X,\omega)/\R$ one replaces $-\mathcal{E}_{\omega}$
with its $\R-$invariant analogue 
\[
J_{\omega}(u):=-\mathcal{E}_{\omega}(u)+\int_{X}u\frac{\omega^{n}}{Vn!}
\]
 often called Aubin's $J-$functional in the Kähler geometry literature.
This then gives a notion of properness (wrt energy) and coercivity
on $\mathcal{E}^{1}(X,\omega)/\R,$ as well, introduced by Tian in
the setting of Kähler geometry (see \cite{ti} and references therein)

In fact, the notions of properness and coercivity above are preserved
under the bijection 
\[
\mathcal{E}^{1}(X,\omega)/\R\rightarrow E_{1}(X,\omega):\,\,\,\, u\mapsto MA(u)
\]
 as follows from the following basic lemma, which also involves Aubin's
\emph{$I-$functional:} 
\[
I_{\omega}(u):=-\frac{1}{Vn!}\int u(\omega_{u}^{n}-\omega^{n})
\]

\begin{lem}
\label{lem:energy as i minus j}The following identity holds 
\[
E(MA(u))=(I_{\omega}-J_{\omega})(u)
\]
 and 
\begin{equation}
\frac{1}{n}J_{\omega}\leq(I_{\omega}-J_{\omega})\leq nJ_{\omega}\label{eq:ineq for i and j}
\end{equation}
In particular, if $\mu\in E_{1}(X,\omega)$ with potential $u_{\mu}\in\mathcal{E}^{1}(X,\omega),$
normalized so that $\int u_{\mu}\omega^{n}=0,$ then 
\begin{equation}
-\left\langle u_{\mu},\mu\right\rangle \geq(\frac{n+1}{n})E(\mu)\label{eq:energy bounded by pairing}
\end{equation}

\end{lem}

\section{\label{sec:Monge-Amp=00003D0000E8re-mean-field}Monge-Ampère mean
field equations and Moser-Trudinger type inequalities }

Fix a probability measure $\mu_{0}$ of finite energy. Recall that
$\beta$ denotes a fixed parameter in $\R-\{0\}$ and when $\beta<0$
we will often write $\beta=-\gamma.$

The \emph{(normalized) Monge-Ampère mean field equation} (ME) associated
to the triple $(\omega,\mu_{0},\beta)$ is the following equation
for $u\in\mathcal{E}^{1}(X,\omega)$ 
\begin{equation}
\frac{\omega_{u}^{n}}{Vn!}=\frac{e^{\beta u}\mu_{0}}{\int_{X}e^{\beta u}\mu_{0}}\label{eq:normalized me}
\end{equation}
 where we recall that the measure in the left hand side above is the
Monge-Ampère measure $MA(u).$ Thanks to the normalizing integral
the equation is invariant under the additive action of $\R$ on $\mathcal{E}^{1}(X,\omega).$
The \emph{non-normalized }ME is the equation

\begin{equation}
\frac{\omega_{u}^{n}}{Vn!}=e^{\beta u}\mu_{0}\label{eq:non-normalized}
\end{equation}
 whose solutions are precisely the solutions of \ref{eq:normalized me}
with $\int_{X}e^{\beta u}\mu_{0}=1.$ In general, the transformation
$u\mapsto u-\frac{1}{\beta}\log\int_{X}e^{\beta u}\mu_{0}$ clearly
maps solutions of \ref{eq:normalized me} to solutions of \ref{eq:non-normalized}.

In this section we will be mainly concerned with the corresponding
\emph{free energy functional} 
\[
F_{\beta}(\mu):=E_{\omega}(\mu)+\frac{1}{\beta}\int_{X}\log(\frac{\mu}{\mu_{0}})\mu
\]
 defined on the space $E_{1}(X,\omega)$ of measure $\mu$ of finite
(pluricomplex) energy (section \ref{sub:The-pluricomplex-energy}).
We recall that the integral in the second term (i.e. the relative
entropy) is by definition equal to $\infty$ if $\mu$ is not absolutely
continuous wrt $\mu_{0}.$ In particular, $F_{\beta}(\mu)$ takes
values in $]-\infty,\infty]$ when $\beta>0$ and in $[-\infty,\infty[$
when $\beta<0.$

One of the reasons that we assume that $\mu_{0}$ is of finite energy
is that we will be interested in the cases when $\beta<0$ and the
functional $F_{\beta}$ admits a maximizer and in particular when
it is bounded from above. But as pointed out below a necessary condition
for this is that $\mu_{0}$ be of finite energy (see the discussion
after Theorem \ref{thm:f and g when beta neg}). We will also be interested
in the closely related functional

\[
\mathcal{G}_{\beta}(u):=\mathcal{E}_{\omega}(u)-\frac{1}{\beta}\log\int_{X}e^{\beta u}\mu_{0}\in[-\infty,\infty[
\]
 defined on the space $\mathcal{E}^{1}(X,\omega)$ of finite energy
$\omega-$psh functions (see section \ref{sec:Monge-Amp=00003D0000E8re-mean-field}).
To avoid notational complexity we will sometimes omit the subscripts
$\beta,$ (as well as the explicit dependence on $\omega$ and $\mu_{0})$. 

We start with the following general regularity result whose first
part is obtained by combining \cite{ko} and \cite{sz-to}. 
\begin{prop}
\label{pro:reg}If $\mu_{0}$ is a volume form then any solution $u\in\mathcal{E}^{1}(X,\omega)$
to equation \ref{eq:non-normalized} is smooth. More generally, the
solution is Hölder continuous under any of the following assumptions: 
\begin{itemize}
\item \cite{ko2}~~$\mu_{0}=fdV$ where $f\in L^{p}(X,dV)$ for some $p>1$
and where $dV=\omega_{0}^{n}$ is the volume form on $X$ of the metric
$\omega_{0}.$ 
\item \cite{hi}~~$\beta\geq0$ and $\mu=fdV_{M}$ where $f\in L^{p}(X,\mu)$
where $M$ is a real smooth submanifold $M$ of $X$ which has codimension
one and $dV_{M}$ is the measure supported on $M$ obtained by integrating
against the Riemannian volume form on $M$ induced by $\omega_{0}$ 
\end{itemize}
\end{prop}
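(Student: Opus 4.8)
The plan is to prove Proposition \ref{pro:reg} by a bootstrapping argument: first establish continuity (or H\"older continuity) of the solution $u$ using potential-theoretic estimates on the Monge-Amp\`ere equation, and then upgrade to full smoothness in the volume-form case by a standard elliptic bootstrap once we know the right-hand side is positive and smooth enough. The key observation that makes this work is that equation \ref{eq:non-normalized} is \emph{semilinear}: the right-hand side $e^{\beta u}\mu_0$ depends on $u$ only through the factor $e^{\beta u}$, and once $u$ is known to be bounded, this factor is bounded above and below, so we are reduced to the Monge-Amp\`ere equation $\omega_u^n/(Vn!)=g\,dV$ with $g$ a fixed density of controlled regularity.

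\textbf{Step 1: a priori $L^\infty$ bound and continuity.} Since $u\in\mathcal{E}^1(X,\omega)$, normalize so that $\sup_X u=0$; then $\int_X e^{\beta u}\mu_0$ is a finite positive constant, call it $c$. In the case $\mu_0=fdV$ with $f\in L^p(dV)$, $p>1$, the measure $e^{\beta u}\mu_0/c$ has density in $L^p(dV)$: when $\beta\ge0$ the factor $e^{\beta u}\le1$ poses no problem, and when $\beta<0$ one needs the finite-energy solution to already be bounded, so in fact one should first invoke Ko\l odziej's theory directly on the equation. The cleanest route: observe that any $u\in\mathcal{E}^1(X,\omega)$ solving \ref{eq:non-normalized} satisfies $\mathrm{MA}(u)=h\,dV$ where, a priori, we only control $h\in L^1$; but by Theorem \ref{thm:var sol of ma} the potential is the variational solution, and the qualitative Moser-Trudinger/integrability hypotheses (finite energy of $\mu_0$, or $f\in L^p$) feed into the Ko\l odziej $L^\infty$-estimate of \cite{ko2} to give $u\in L^\infty(X)$. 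Once $u$ is bounded, $e^{\beta u}$ is bounded between two positive constants, hence $\mathrm{MA}(u)=g\,dV$ with $g\in L^p(dV)$ (in the first bullet) and the stability/regularity theorem of \cite{ko,ko2} yields that $u$ is continuous, in fact H\"older continuous. For the second bullet, where $\mu_0=fdV_M$ is supported on a real hypersurface, one instead appeals to the result of \cite{hi} (He) which is designed precisely for densities supported on submanifolds of codimension one; the hypothesis $\beta\ge0$ is exactly what keeps $e^{\beta u}\le c'$ so that the measure on the right is dominated by a fixed multiple of $dV_M$, putting us in the scope of that reference.

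\textbf{Step 2: higher regularity when $\mu_0$ is a volume form.} Now suppose $\mu_0=e^{\phi}dV$ with $\phi$ smooth (a volume form). By Step 1, $u$ is continuous, hence $e^{\beta u}\mu_0$ is a continuous strictly positive volume form. One would like to quote Yau's theorem, but $\omega$ need not be positive and $u$ need not be smooth a priori, so the argument of \cite{sz-to} (Sz\'ekelyhidi--Tosatti) is the right tool: starting from the continuous solution $u$ of the elliptic equation, run the associated parabolic Monge-Amp\`ere flow (the normalized K\"ahler-Ricci-type flow whose stationary point is $u$) with $u$ as initial datum; the smoothing estimates for that flow, combined with the fact that $u$ is already the solution and hence the flow is stationary, force $u$ itself to be smooth. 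Concretely: the flow instantaneously regularizes the initial datum, but uniqueness of the flow together with the fact that the constant path $t\mapsto u$ is a (weak) solution identifies the regularized solution with $u$; alternatively, one differentiates and bootstraps $C^{2,\alpha}$ estimates (the Evans--Krylov / complex Monge-Amp\`ere second-order estimates, whose presence in this exact setting the author attributes to \cite{b-k,rub1}) and then Schauder theory to reach $C^\infty$.

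\textbf{Main obstacle.} The delicate point is Step 1 in the regime $\beta<0$ with $\mu_0$ merely of finite energy (not $L^p$): there is no a priori $L^\infty$ bound, and the exponential nonlinearity with a negative exponent means large negative values of $u$ produce large mass on the right, so the naive iteration does not close. This is why the proposition only claims smoothness when $\mu_0$ is an honest volume form and only claims H\"older continuity under the extra $L^p$ hypothesis (or the $\beta\ge0$ hypothesis in the hypersurface case); the genuinely hard analytic input — the $L^\infty$ bound under an $L^p$ density, respectively the regularity for densities on a hypersurface — is imported wholesale from \cite{ko2} and \cite{hi} respectively, and from \cite{sz-to} for the final smoothing. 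So the ``proof'' here is essentially a careful reduction: normalize, extract from finite energy the integrability needed to invoke the cited $L^\infty$/H\"older estimates, and then feed continuity into the parabolic smoothing machine. I would make sure to state explicitly, at the start, that the content is the reduction of the semilinear equation to a Monge-Amp\`ere equation with fixed density, since that is the one conceptual step, everything else being a citation.
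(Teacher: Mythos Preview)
Your overall architecture (reduce to an $L^p$ density, apply Ko\l odziej, then Sz\'ekelyhidi--Tosatti) is exactly the paper's approach, and your treatment of the $\beta\ge 0$ cases and of Step~2 is fine. But Step~1 has a genuine gap in the case $\beta<0$, which is the only nontrivial case.

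You correctly identify the obstacle: when $\beta=-\gamma<0$, the factor $e^{-\gamma u}$ is not a priori bounded, so you cannot conclude that $MA(u)$ has $L^p$ density just from $\sup_X u=0$ and $f\in L^p$. Your attempted escape --- invoking Theorem~\ref{thm:var sol of ma} and ``qualitative Moser--Trudinger/integrability hypotheses'' --- does not actually close the loop: nothing in the variational characterization or the finite-energy hypothesis on $\mu_0$ by itself gives you $e^{-\gamma u}\in L^q$ for some $q>1$, which is what you need to combine with $f\in L^p$ via H\"older. The paper's argument fills this gap with a single clean fact you are missing: any $u\in\mathcal{E}^1(X,\omega)$ has \emph{zero Lelong numbers} (this is Cor.~1.8 in \cite{g-z2}), and then Skoda's integrability theorem gives $e^{-\gamma u}\in L^q(dV)$ for \emph{every} $q>0$. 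H\"older's inequality with $f\in L^p$ then yields $MA(u)=e^{-\gamma u}f\,dV\in L^{p'}(dV)$ for some $p'>1$, and Ko\l odziej applies. Once you insert this one sentence, your proof becomes essentially identical to the paper's.

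A minor correction: your description of the Sz\'ekelyhidi--Tosatti step is overcomplicated. Their theorem is a black box: a bounded weak solution of $MA(u)=e^{\Phi(u)}dV$ with $\Phi$ smooth is automatically smooth. You do not need to argue that the flow is stationary or to invoke Evans--Krylov yourself; just cite \cite{sz-to} directly after Step~1 gives boundedness.
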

\begin{proof}
\emph{Let $\mu_{0}$ be a volume form and }$u\in\mathcal{E}^{1}(X,\omega)$
a solution to equation \ref{eq:non-normalized}.\emph{ Step one: $u$
is bounded (continuous). }Since $u\in\mathcal{E}^{1}(X,\omega)$ the
function $u$ has no Lelong numbers (\cite{g-z2}, Cor 1.8) , i.e.
$\int e^{\beta u}\mu_{0}$ is integrable for all $\beta$ (by Skoda's
inequality, see for example \cite{d-j}). In particular, by equation
\ref{eq:non-normalized} $MA(u)\in L^{p}(X)$ for some $p>1.$ But
then Kolodziej's theorem \cite{ko} says that $u$ is bounded (and
even Hölder continuous \cite{ko2}).

\emph{Step two: higher order regularity. }By the previous step $u$
is a bounded weak solution to an equation of the form $MA(u)=e^{\Phi(u)}\mu_{0}$
where $\Phi(x)$ is a smooth function on $\R.$ But then the theorem
of Székelyhidi-Tosatti \cite{sz-to} says that $u$ is smooth. 
\end{proof}
When $\beta=0$ the first and second point is proved in \cite{ko2}
and \cite{hi}, respectively. But then the case when $\beta>0$ also
follows, since the factor $f:=e^{\beta u}$ is always bounded then
(just using that $u$ is usc).

\subsection{The case when $\beta>0$}

We start by considering the general case when $\beta>0$ which is
considerably simpler than the case when $\beta<0.$ This difference
in behavior is a reflection of the fact that in the former case the
functional $F_{\beta}$ above is a \emph{sum} of two convex functionals,
while in the latter case it is a \emph{difference} of two convex functionals.
The following theorem gives a slightly more general version of Theorem
\ref{thm:main intro} stated in the introduction, in the case $\beta>0,$
as we do not assume that $\mu_{0}$ has finite energy. 
\begin{thm}
\label{thm:existence etc when beta pos}Assume that $\beta>0$ and
that the background measure $\mu_{0}$ does not charge pluripolar
sets. Then there is a unique solution $u_{ME}\in\mathcal{E}_{1}(X)$
mod $\R$ of the  equation \ref{eq:normalized me}. Moreover, $u_{ME}$
is smooth if $\mu_{0}$ is a volume form. In general, 
\begin{itemize}
\item $u_{ME}$ is the unique (mod $\R)$ maximizer of the functional $\mathcal{G_{\beta}}$
on $\mathcal{E}^{1}(X,\omega)$ 
\item $\mu_{ME}(:=MA(u_{ME}))$ is the unique minimizer of the free energy
functional $F_{\beta}$ on $\mathcal{M}_{1}(X)$ 
\end{itemize}
\end{thm}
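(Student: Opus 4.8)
The plan is to run the direct method on the \emph{measure} side: for $\beta>0$ the free energy $F_\beta=E+\tfrac1\beta D$ is a sum of two convex, lower semicontinuous functionals, so minimizing it is soft, and the statements about $\mathcal{G}_\beta$ will then follow from Legendre duality. First I would record the elementary duality inequality: for every $\mu\in\mathcal{M}_1(X)$ and every $u\in\mathcal{E}^1(X,\omega)$,
\[
F_\beta(\mu)\ge\mathcal{G}_\beta(u),
\]
obtained by adding $E(\mu)\ge\mathcal{E}_\omega(u)-\langle u,\mu\rangle$ (immediate from the definition of $E$ as a supremum) to $\tfrac1\beta D(\mu)\ge-\tfrac1\beta\log\int_Xe^{\beta u}\mu_0+\langle u,\mu\rangle$ (the duality $\tfrac1\beta D=\mathcal{L}_\beta^*$, i.e.\ Young's inequality for this Legendre pair; the pairing $\langle u,\mu\rangle$ is finite by Theorem \ref{thm:var sol of ma} when $E(\mu)<\infty$). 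Taking $u=0$ in the supremum defining $E$ also gives $E\ge0$, hence $F_\beta\ge0$, so $\inf_{\mathcal{M}_1(X)}F_\beta$ is finite and $\inf_{\mathcal{M}_1(X)}F_\beta\ge\sup_{\mathcal{E}^1(X,\omega)}\mathcal{G}_\beta$.

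Next I would produce a minimizer of $F_\beta$ by compactness. The energy $E$ is lower semicontinuous on the compact space $\mathcal{M}_1(X)$, so its sublevel sets $\{E\le C\}$ are compact, and the relative entropy $D$ is lower semicontinuous for the weak topology; hence $F_\beta$ is lsc. Along a minimizing sequence $\mu_j$, the bound $E(\mu_j)\le F_\beta(\mu_j)\le C$ confines $\mu_j$ to a compact set; passing to a weak limit $\mu_{ME}$ and using lower semicontinuity of both $E$ and $D$ gives $F_\beta(\mu_{ME})\le\liminf_jF_\beta(\mu_j)=\inf F_\beta<\infty$. So $\mu_{ME}$ is a minimizer; since $D(\mu_{ME})<\infty$ it is absolutely continuous with respect to $\mu_0$, say $\mu_{ME}=f_{ME}\mu_0$, and by Theorem \ref{thm:var sol of ma} it has a potential $u_{ME}:=u_{\mu_{ME}}\in\mathcal{E}^1(X,\omega)$, unique mod $\R$, with $MA(u_{ME})=\mu_{ME}$.

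I would then extract the Euler--Lagrange equation. Varying $\mu_{ME}$ along segments $\mu^t=\mu_{ME}+t\nu$ with $\nu=(\phi-\langle\phi,\mu_{ME}\rangle)\mu_{ME}$, $\phi\in L^\infty(\mu_{ME})$ — these stay in $E_1(X,\omega)\cap\{D<\infty\}$ for $|t|$ small, because a measure with bounded density relative to a finite-energy measure again has finite energy (Theorem \ref{thm:var sol of ma}) and finite entropy — Propositions \ref{pro:deriv of E} and \ref{pro:deriv of D} give $\tfrac{d}{dt}F_\beta(\mu^t)|_{t=0}=\langle-u_{ME}+\tfrac1\beta\log(\mu_{ME}/\mu_0),\nu\rangle$, and minimality of $\mu_{ME}$ forces this to vanish for all such $\nu$, i.e.\ $-u_{ME}+\tfrac1\beta\log(\mu_{ME}/\mu_0)$ is constant $\mu_{ME}$-a.e. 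The one point that is not formal is that this must be upgraded to an identity of measures, which requires $f_{ME}>0$ $\mu_0$-a.e. If $f_{ME}$ vanished on a set $B$ with $\mu_0(B)>0$, then comparing $F_\beta(\mu_{ME})$ with $F_\beta\big((1-s)\mu_{ME}+s\,\mathbf 1_B\mu_0/\mu_0(B)\big)$ and using the behaviour of $t\mapsto t\log t$ near $0$ (i.e.\ the strict convexity of the entropy), the difference comes out to $\tfrac1\beta s\log s+O(s)<0$ for small $s>0$, contradicting minimality. Hence $\mu_{ME}=c\,e^{\beta u_{ME}}\mu_0$ as measures, and equating total masses yields the normalized equation \ref{eq:normalized me}, so $u_{ME}$ solves (ME).

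For the remaining assertions: since $u_{ME}$ solves \ref{eq:normalized me}, equality holds in the Young inequality for $(\tfrac1\beta D,\mathcal{L}_\beta)$ at the pair $(\mu_{ME},u_{ME})$ — the $\beta>0$ analogue of Lemma \ref{lem:sup of l attained}, with the same proof — while $E(\mu_{ME})=\mathcal{E}_\omega(u_{ME})-\langle u_{ME},\mu_{ME}\rangle$ always holds for the potential; adding these gives $F_\beta(\mu_{ME})=\mathcal{G}_\beta(u_{ME})$, which together with the first paragraph shows $\inf_{\mathcal{M}_1(X)}F_\beta=\sup_{\mathcal{E}^1(X,\omega)}\mathcal{G}_\beta$ and that $u_{ME}$ maximizes $\mathcal{G}_\beta$. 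The minimizer is unique because $F_\beta$ is strictly convex where finite ($E$ is convex by \ref{eq:conv ineq for E} and $D$ is strictly convex on $\mu_0$-absolutely continuous measures). Any $u\in\mathcal{E}^1(X,\omega)$ solving \ref{eq:normalized me} has $F_\beta(MA(u))=\mathcal{G}_\beta(u)\le\inf F_\beta\le F_\beta(MA(u))$, so $MA(u)=\mu_{ME}$ and $u=u_{ME}$ mod $\R$ by injectivity of $MA$ on $\mathcal{E}^1(X,\omega)/\R$ (Theorem \ref{thm:var sol of ma}); and any maximizer $u^*$ of $\mathcal{G}_\beta$ satisfies its own Euler--Lagrange equation — differentiate $t\mapsto\mathcal{G}_\beta(u^*+tv)$, $v\in C^0(X)$, using $d\mathcal{E}_\omega=MA$ on $\mathcal{E}^1(X,\omega)$ and the chain rule for the log-term, legitimate since finite-energy potentials have vanishing Lelong numbers so $\int_Xe^{\beta u^*}\mu_0<\infty$ — which is again \ref{eq:normalized me}, hence $u^*=u_{ME}$ mod $\R$. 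Finally, when $\mu_0$ is a volume form, Proposition \ref{pro:reg} applied to the non-normalized equation \ref{eq:non-normalized} makes $u_{ME}$ smooth. The main obstacle in all of this is the strict positivity $f_{ME}>0$ $\mu_0$-a.e.\ used in the third paragraph: without it the critical-point relation is valid only $\mu_{ME}$-a.e.\ and need not be an equation of measures, and establishing it is precisely where the strict convexity of the entropy (and the finite-energy assumption on $\mu_0$, keeping the comparison measures in $E_1(X,\omega)$) enter.
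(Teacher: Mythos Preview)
Your route is genuinely different from the paper's. The paper works on the potential side first: it produces a maximizer of $\mathcal{G}_\beta$ by compactness and upper semicontinuity on $PSH(X,\omega)/\R$, shows every maximizer solves the equation via the projection trick (differentiating $t\mapsto\mathcal{E}_\omega(P(u_*+tv))-\mathcal{L}^+(u_*+tv)$, which is legitimate thanks to the Berman--Boucksom differentiability of $\mathcal{E}_\omega\circ P$), and only then turns to $F_\beta$, using the convexity inequality \eqref{eq:conv ineq for E} and Propositions~\ref{pro:deriv of E}, \ref{pro:deriv of D} to show $MA(u_{ME})$ is a (hence the) minimizer. You instead run the direct method on the measure side, extract the Euler--Lagrange equation for the $F_\beta$-minimizer, and deduce the $\mathcal{G}_\beta$-statements from duality. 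Your positivity argument $f_{ME}>0$ $\mu_0$-a.e.\ via the $s\log s$ competitor is a nice touch that the paper does not need at all. What you buy is a very transparent treatment of the $F_\beta$ side; what you lose is exactly the point below.

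There is one genuine gap. In your last paragraph you claim that any maximizer $u^*$ of $\mathcal{G}_\beta$ solves the equation by ``differentiating $t\mapsto\mathcal{G}_\beta(u^*+tv)$, $v\in C^0(X)$, using $d\mathcal{E}_\omega=MA$ on $\mathcal{E}^1(X,\omega)$''. This step fails as written: for generic $v\in C^0(X)$ the path $u^*+tv$ leaves $PSH(X,\omega)$ for every $t\ne0$, so $\mathcal{E}_\omega(u^*+tv)$ is simply not defined (and the formula for $d\mathcal{E}_\omega$ only holds along paths of $\omega$-psh functions). Restricting to one-sided variations $t\mapsto(1-t)u^*+tw$ with $w\in\mathcal{H}(X,\omega)$ only yields an inequality, not the equation. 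This is precisely the ``boundary'' obstacle the paper flags (``when $n>1$ this is highly non-trivial'') and is the whole reason for the projection trick: one replaces $\mathcal{E}_\omega(u^*+tv)$ by $\mathcal{E}_\omega(P(u^*+tv))$, checks that the resulting function of $t$ is still maximized at $t=0$ (using $P(u)\le u$ and monotonicity of $\mathcal{L}^+$), and then invokes Theorem~2.7 (differentiability of $\mathcal{E}_\omega\circ P$) to get $MA(u^*)=e^{\beta u^*}\mu_0/\int e^{\beta u^*}\mu_0$. With this fix your proof goes through; alternatively you can avoid the issue altogether by observing that $\mathcal{G}_\beta$ is strictly concave on $\mathcal{E}^1(X,\omega)/\R$ (both $\mathcal{E}_\omega$ and, by H\"older, $\mathcal{L}_\beta$ are concave, and along the affine segment joining $u_{ME}$ to any other maximizer both must be affine, forcing $u^*-u_{ME}$ constant).

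A minor point: your positivity argument uses that the competitor $\mathbf 1_B\mu_0/\mu_0(B)$ has finite energy, which you justify by ``the finite-energy assumption on $\mu_0$''. That is the standing hypothesis of the section, but the theorem as stated only assumes $\mu_0$ does not charge pluripolar sets; you should either note you are using the stronger standing assumption, or observe that the paper's proof (being potential-side first) does not need it.
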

More generally, if $\mu_{j}$ is a sequence such that 
\[
F_{\beta}(\mu_{j})\rightarrow\inf_{E(\mu,\omega)}F_{\beta}
\]
 then $\mu_{j}$ converges to $\mu_{ME}$ in the weak topology of
measures. 
\begin{proof}
To simplify the notation we assume that $\beta=1$ and write $\mathcal{G}:=\mathcal{G}_{1}$
and $\mathcal{L}^{+}(u):=\log\int e^{u}\mu_{0}$ so that $\mathcal{G}=\mathcal{E}-\mathcal{L}^{+}.$

\emph{Existence of solution:}

The existence of a solution $u_{ME}\in\mathcal{E}^{1}(X,\omega)$
is proved by adapting the variational approach to solving Monge-Ampère
equations introduced in \cite{bbgz} to the present setting. In the
paper \cite{bbgz} the case when $\beta=0$ was treated, as well as
the case when $\beta>0$ and $\mu_{0}$ is a volume form.

\emph{Step one:} \emph{existence of a maximizer of $\mathcal{G}$}

We will denote by $\mathcal{E}^{1}(X,\omega)_{0}$ the subspace of
all $u$ in $\mathcal{E}^{1}(X,\omega).$ such that $\sup_{X}u=0.$
Since $\mathcal{G}$ is invariant under the $\R-$action we may take
a sequence in $\mathcal{E}^{1}(X,\omega)_{0}$ such that 
\[
\mathcal{G}(u_{j})\rightarrow\sup_{\mathcal{E}^{1}(X,\omega)}\mathcal{G}<\infty
\]
 Moreover, by the compactness of $PSH(X)/\R$ (see section \ref{sub:The-space-})
we may assume that $u_{j}\rightarrow u_{\infty}$ in $L^{1}(X).$
By Prop \ref{pro:energy is ups} $\mathcal{E}_{\omega}$ is usc and
according to Lemma 1.14 in \cite{b-b-w} so is $\mathcal{L}^{+}$
since $\mu_{0}$ does not charge pluripolar sets (see Lemma \ref{lem:kont of exp integral with bd}
below for a generalization). Hence $u_{\infty}\in\mathcal{E}^{1}(X,\omega)$
and 
\[
\mathcal{G}(u_{\infty})\geq\sup_{\mathcal{E}^{1}(X,\omega)}\mathcal{G}<\infty
\]
 and since $u_{\infty}$ is a candidate for the sup equality must
hold above.

\emph{Step two: Any maximizer of $\mathcal{G}$ on $\mathcal{E}^{1}(X,\omega)$
satisfies equation }\ref{eq:normalized me}

Let $u_{*}$ be a maximizer, fix $v\in\mathcal{C}^{\infty}(X)$ and
consider the following function on $\R$ 
\[
g(t):=\mathcal{E}(P(u_{*}+tv))+\mathcal{L}^{+}(u_{\infty}+tv)
\]
 where $(P(u_{*}+tv)\in\mathcal{E}^{1}(X,\omega),$ since $v$ is
bounded. It has a global maximizer at $t=0.$ Indeed, this using that
the projection $P$ and $-\mathcal{L}$ are increasing with respect
to $\leq$ gives 
\[
(\mathcal{E\circ}P)-\mathcal{L})(u)=(\mathcal{E}-\mathcal{L})(Pu)+\mathcal{L\circ}P-\mathcal{L\le}(\mathcal{E}-\mathcal{L})(Pu)
\]

Since by Theorem \ref{thm:var sol of ma} (and a simple approximation
argument; see Lemma 4.2 in \cite{bbgz}) and Prop \ref{pro:deriv of D}
$g$ is differentiable it follows from the formulas for their derivatives
that 
\[
\frac{dg(t)}{dt}_{t=0}=0=\left\langle MA(Pu_{*})-e^{\beta u}/\int_{X}e^{\beta u}\mu_{0},v\right\rangle =0
\]
 and since, by definition, $Pu_{*}=u_{*}$ and $v$ was arbitrary
this means that $u_{\infty}$ solves equation \ref{eq:normalized me}.

\emph{Regularity:}

By Prop \ref{pro:reg} any weak solution as above is in fact smooth
when $\mu_{0}$ is a volume form. It should be pointed out that when
$\mu_{0}$ is a volume form the existence of a smooth solution, when
$\beta>0,$ is a direct consequence of the Aubin-Yau estimates \cite{au,y},
using the continuity method.

\emph{Proof of the second point: $MA(u_{ME})$ is the unique minimizer
of $F$ (and $u_{ME}$ is the unique solution of equation \ref{eq:non-normalized})}

To prove this first observe that $F(\mu)$ is strictly convex on $\{F<\infty\}.$
Indeed, $E(\mu)$ is clearly convex (as it can be realized as a Legendre-Fenchel
transform) and it is well-known \cite{de-ze} that $D(\mu)$ is strictly
convex on $\{D<\infty\}.$ Now fix $\mu$ such that $F(\mu)<\infty$
and consider the affine segment 
\[
\mu^{t}:=\mu_{MF}(1-t)+t\mu=:\mu_{MF}+t\nu
\]
 Next let us prove that 
\[
\frac{dD(\mu^{t})}{dt}_{t=0^{+}}=-\int_{X}\log(\mu_{MF}/\mu_{0})\nu
\]
 But this follows from Prop \ref{pro:deriv of D}, since the rhs above
is finite. Indeed, by the  equation, it equals $-\int_{X}u_{MF}\nu$
where, as shown above, $\mathcal{E}(u_{MF})>\infty$ and by assumption
$\nu$ is a difference of finite energy measures. Hence, the integral
is finite according to Theorem \ref{thm:var sol of ma}. Now combining
the formula for $\frac{dD(\mu^{t})}{dt}_{t=0^{+}}$ above with the
convexity of $D$ and the inequality for $E$ in Prop \ref{pro:deriv of E}
gives 
\[
F(\mu)\geq F(\mu_{ME})+0
\]
 for any $\mu$ such that $E(\mu)$ and $D(\mu)$ are both finite.
Moreover, the \emph{strict} concavity of $F$ discussed above shows
that $\mu_{ME}:=MA(u_{ME})$ is the \emph{unique} minimizer of $F(\mu)$
on $E_{1}(X,\omega).$ The previous argument also gives that any solution
$u_{1}\in\mathcal{E}^{1}(X,\omega)$ of equation \ref{eq:non-normalized}
is such that $MA(u_{1})$ is a minimizer of $F.$ As a consequence
$MA(u_{0})=MA(u_{1})$ for any two solutions and hence $u_{0}-u_{1}$
is constant according to Theorem \ref{thm:var sol of ma}. This finishes
the proof of the second point.

To prove the final convergence recall that the functionals $E$ and
$D$ arise as Legendre transforms and are in particular lower semi-continuous.
As a consequence any weak limit point $\mu_{*}$ of the sequence $\mu_{j}$
is a minimizer of $F(\mu).$ But then it follows from the strict convexity
used above (i.e. the uniqueness) that $\mu_{*}=\mu_{ME}.$ 
\end{proof}
Now we can prove the following theorem \ref{thm:main intro} (which
in particular implies Theorem \ref{thm:duality intro} in the case
$\beta>0):$ 
\begin{thm}
\label{thm:relat f and g beta pos}Assume that $\beta>0.$ Then the
following relations between the functionals $F:=F_{\beta}$ and $\mathcal{G}:\mathcal{=}\mathcal{G}_{\beta}$
hold \end{thm}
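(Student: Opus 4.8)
The relations to be proved are the $\beta>0$ counterparts of the $\beta<0$ statements in Theorem~\ref{thm:f and g when beta neg}, namely the pointwise inequality
\[
F_\beta\Bigl(\tfrac{\omega_u^{n}}{Vn!}\Bigr)\ \geq\ \mathcal{G}_\beta(u)\qquad\text{for every }u\in\mathcal{E}^1(X,\omega),
\]
with equality precisely for the solutions of the normalized equation \ref{eq:normalized me}, together with the coincidence of extremal values $\inf_{E_1(X,\omega)}F_\beta=\sup_{\mathcal{E}^1(X,\omega)}\mathcal{G}_\beta$. The plan is a short Legendre--Fenchel duality argument; the one structural difference from the $\beta<0$ case is that for $\beta>0$ the functional $F_\beta=E+\tfrac1\beta D$ is a \emph{sum} of two convex functionals rather than a difference, so the inequality runs in the opposite direction and no coercivity hypothesis enters.

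First I would fix $u\in\mathcal{E}^1(X,\omega)$ and put $\mu:=MA(u)$. Since $v\mapsto MA(v)$ is a bijection $\mathcal{E}^1(X,\omega)/\R\to E_1(X,\omega)$ (Section~\ref{sub:Properness-and-coercivity}), we have $\mu\in E_1(X,\omega)$ and $u=u_\mu$ modulo $\R$, so \ref{eq:energy in terms of pot} gives $E(\mu)=\mathcal{E}_\omega(u)-\langle u,\mu\rangle$, with $\langle u,\mu\rangle$ finite by Theorem~\ref{thm:var sol of ma}. On the other hand the Gibbs variational principle for the relative entropy --- the duality $\tfrac1\beta D(\mu)=\sup_{v\in C^0(X)}\bigl(\langle v,\mu\rangle-\tfrac1\beta\log\int_X e^{\beta v}\mu_0\bigr)$ recalled in Section~\ref{sub:The-relative-entropy}, applied with $v=u$ after monotone approximation of $u$ by continuous functions from above and below exactly as in Lemma~\ref{lem:sup of l attained} --- yields $\tfrac1\beta D(\mu)\geq\langle u,\mu\rangle-\tfrac1\beta\log\int_X e^{\beta u}\mu_0$, the integral being automatically finite since $u$ is bounded above and positive since $\mu_0$, being of finite energy, does not charge pluripolar sets. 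Adding the two estimates and cancelling $\pm\langle u,\mu\rangle$ gives $F_\beta(MA(u))=E(\mu)+\tfrac1\beta D(\mu)\geq\mathcal{G}_\beta(u)$.

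Next I would pin down the equality case: equality forces equality in the Gibbs inequality, which by the $\beta>0$ analogue of Lemma~\ref{lem:sup of l attained} (proved identically, using convexity of $x\mapsto e^x$ and Proposition~\ref{pro:deriv of D}) holds iff $\mu=e^{\beta u}\mu_0/\int_X e^{\beta u}\mu_0$; since $\mu=MA(u)$ this is exactly equation \ref{eq:normalized me}. For the coincidence of optimal values I would then invoke Theorem~\ref{thm:existence etc when beta pos}, which furnishes a solution $u_{ME}\in\mathcal{E}^1(X,\omega)$ of \ref{eq:normalized me} that maximizes $\mathcal{G}_\beta$ and such that $\mu_{ME}:=MA(u_{ME})$ minimizes $F_\beta$; by the equality case, $\inf_{E_1}F_\beta=F_\beta(\mu_{ME})=\mathcal{G}_\beta(u_{ME})=\sup_{\mathcal{E}^1}\mathcal{G}_\beta$. (One can also bypass the existence theorem: the pointwise inequality already gives $\inf_\mu F_\beta\geq\sup_u\mathcal{G}_\beta$ by writing each $\mu\in E_1$ as $MA(u_\mu)$, while the reverse inequality $\inf_\mu(E+\tfrac1\beta D)\leq\sup_u\bigl(E^*(u)+(\tfrac1\beta D)^*(u)\bigr)=\sup_u\mathcal{G}_\beta$ is the Fenchel--Rockafellar duality of \cite{de-ze}, valid since $E$ is finite and continuous at volume forms.)

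The step I expect to demand the most care is the regularity bookkeeping in the second paragraph: one must ensure that $\langle u,MA(u)\rangle\in\R$ so that the cancellation of pairings is legitimate, and that the Gibbs inequality --- classically stated for bounded continuous $u$ --- persists for $u\in\mathcal{E}^1(X,\omega)$. Both go through as in the proof of Lemma~\ref{lem:sup of l attained}: finiteness of the pairing follows from Theorem~\ref{thm:var sol of ma} (equivalently from the bound $|\langle u,\mu\rangle|\leq C_\mu(-\mathcal{E}_\omega(u))^{1/2}$ of \cite{bbgz}), and the Gibbs inequality for $\mathcal{E}^1$-potentials is obtained by monotone approximation of $u$ by continuous functions together with the monotone convergence theorem.
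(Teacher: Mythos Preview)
Your argument is correct and follows essentially the same route as the paper: the paper defers the pointwise inequalities to a ``trivial modification (obtained by changing a few signs)'' of the $\beta<0$ proof in Theorem~\ref{thm:f and g when beta neg}, which is precisely your Gibbs/Legendre computation, and then derives the equality $\inf F_\beta=\sup\mathcal{G}_\beta$ by invoking Theorem~\ref{thm:existence etc when beta pos}, exactly as you do. One small omission: the theorem also asserts the companion inequality $F_\beta\bigl(e^{\beta u}\mu_0/\int e^{\beta u}\mu_0\bigr)\geq\mathcal{G}_\beta(u)$, with equality iff $u$ solves \eqref{eq:normalized me}; this is the symmetric computation where the entropy term is attained exactly (by the $\beta>0$ analogue of Lemma~\ref{lem:sup of l attained}) and the energy term is bounded below by $\mathcal{E}_\omega(u)-\langle u,\mu'\rangle$ using the variational definition of $E$, with equality iff $MA(u)=\mu'$.
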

\begin{itemize}
\item For any $u\in\mathcal{E}^{1}(X,\omega)$ we have 
\[
F(MA(u))\geq\mathcal{G}(u)
\]
 and 
\[
F(e^{\beta u}\mu_{0}/\int e^{\beta u}\mu_{0})\geq\mathcal{G}(u)
\]
 Equality in any of the two inequalities above holds iff $u$ is a
solution of the  equation \ref{eq:normalized me} (and hence equalities
then hold in \emph{both} inequalities above) 
\item Moreover, 
\[
\inf_{\mu\in E_{1}(X,\omega)}F(\mu)=\sup_{u\in\mathcal{E}^{1}(X,\omega)}\mathcal{G}(u)<\infty
\]
 \end{itemize}
\begin{proof}
We skip the proof of the first point as it is a trivial modification
(obtained by changing a few signs) of the proof given below for the
corresponding inequalities in Theorem \ref{thm:f and g when beta neg}.
The first point then immediately gives 
\begin{equation}
\inf_{\mu\in E_{1}(X,\omega)}F(\mu)\geq\sup_{u\in\mathcal{E}^{1}(X,\omega)}\mathcal{G}(u)\label{eq:proof of f and g beta pos}
\end{equation}
 According to the previous theorem the infimum in the LHS above is
attained precisely for $\mu=MA(u)$ where $u$ is the unique solution
mod $\R$ of the  equation \ref{eq:normalized me} and similarly for
the supremum in the RHS above. But then it follows from the equality
case in the first point that equality in fact holds in \ref{eq:proof of f and g beta pos}. 
\end{proof}

\subsection{The case when $\beta<0$}

In this case we start by proving the following refinement of Theorem
\ref{thm:duality intro}, in the case when $\beta<0:$
\begin{thm}
\label{thm:f and g when beta neg}The following relations between
the functionals $F:=F_{-\gamma}$ and $\mathcal{G}:\mathcal{=}\mathcal{G}_{-\gamma}$
hold \end{thm}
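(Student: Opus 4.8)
The plan is to prove all the asserted relations between $F_{-\gamma}$ and $\mathcal{G}_{-\gamma}$ by \emph{term-by-term Legendre--Fenchel duality}, pairing for each of the two summands the exact ``complementary slackness'' identity against the companion sub-optimality bound, rather than by applying Fenchel--Rockafellar to the difference $E-\tfrac1\gamma D$ (which is not a difference of Legendre transforms on $C^{0}(X)$). Two exact identities are available. First, by Theorem~\ref{thm:var sol of ma}, if $u\in\mathcal{E}^{1}(X,\omega)$ then $u$ is (mod $\R$) the potential of $\mu:=MA(u)$ and realises the supremum defining $E$, so
\[
E(MA(u))=\mathcal{E}_{\omega}(u)-\langle u,MA(u)\rangle .
\]
Second, by Lemma~\ref{lem:sup of l attained}, if $\mu$ equals the Gibbs measure $\mu_{u}:=e^{-\gamma u}\mu_{0}/\int_{X}e^{-\gamma u}\mu_{0}$ then $u$ realises the supremum defining $(\mathcal{L}_{\gamma}^{-})^{*}=\tfrac1\gamma D$, so $\tfrac1\gamma D(\mu_{u})=\mathcal{L}_{\gamma}^{-}(u)-\langle u,\mu_{u}\rangle$, where $\mathcal{L}_{\gamma}^{-}(u)=-\tfrac1\gamma\log\int_{X}e^{-\gamma u}\mu_{0}$. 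The companion bounds are the defining supremum $E(\mu)\ge\mathcal{E}_{\omega}(v)-\langle v,\mu\rangle$, and $\tfrac1\gamma D(\mu)\ge\mathcal{L}_{\gamma}^{-}(v)-\langle v,\mu\rangle$; the latter I would first record for $v\in C^{0}(X)$ and then extend to every $v\in\mathcal{E}^{1}(X,\omega)$ with $\int_{X}e^{-\gamma v}\mu_{0}<\infty$ by taking Kähler potentials $v_{j}\downarrow v$ (Prop~\ref{pro:(Demailly-ref).-The}), using monotone convergence for $\int e^{-\gamma v_{j}}\mu_{0}\uparrow\int e^{-\gamma v}\mu_{0}$ and the continuity of $\langle\,\cdot\,,\mu\rangle$ along decreasing sequences for a finite-energy $\mu$.

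\textbf{The pointwise inequalities.} Adding the $E$-identity at $u$ to the $D$-bound $-\tfrac1\gamma D(MA(u))\le-\mathcal{L}_{\gamma}^{-}(u)+\langle u,MA(u)\rangle$ and cancelling the pairings gives, for every $u\in\mathcal{E}^{1}(X,\omega)$,
\[
F_{-\gamma}(MA(u))\ \le\ \mathcal{E}_{\omega}(u)-\mathcal{L}_{\gamma}^{-}(u)\ =\ \mathcal{G}_{-\gamma}(u)
\]
(trivial unless $\int_{X}e^{-\gamma u}\mu_{0}<\infty$), with equality iff the $D$-bound is sharp at $u$, i.e. iff $MA(u)=\mu_{u}$, which is precisely equation~\ref{eq:normalized me}. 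Dually, adding the exact identity $\tfrac1\gamma D(\mu_{u})=\mathcal{L}_{\gamma}^{-}(u)-\langle u,\mu_{u}\rangle$ to the $E$-bound $E(\mu_{u})\ge\mathcal{E}_{\omega}(u)-\langle u,\mu_{u}\rangle$ and subtracting gives
\[
F_{-\gamma}\!\left(e^{-\gamma u}\mu_{0}\,/\!\int_{X}e^{-\gamma u}\mu_{0}\right)\ \ge\ \mathcal{G}_{-\gamma}(u),
\]
now with equality iff the $E$-bound is sharp at $u$, i.e. iff $u$ is the potential of $\mu_{u}$, again~\ref{eq:normalized me}. The decisive structural feature --- in contrast with the case $\beta>0$ of Theorem~\ref{thm:relat f and g beta pos}, where both inequalities point the same way --- is that here they point in \emph{opposite} directions, and this is exactly what pins down a common optimal value.

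\textbf{Equality of optimal values.} Every $\mu\in E_{1}(X,\omega)$ equals $MA(u_{\mu})$ with $u_{\mu}\in\mathcal{E}^{1}(X,\omega)$, so taking suprema in the first inequality yields $\sup_{E_{1}}F_{-\gamma}\le\sup_{\mathcal{E}^{1}}\mathcal{G}_{-\gamma}$. For the opposite inequality I run $u$ over $\mathcal{H}(X,\omega)$: then $e^{-\gamma u}$ is bounded above and below, so $\mu_{u}$ is a bounded density times $\mu_{0}$ and hence of finite energy --- if $0\le f\le M$ and $\mu_{0}\in E_{1}$ then, for $v$ with $\sup_{X}v=0$, one has $-\langle v,f\mu_{0}\rangle\le M|\langle v,\mu_{0}\rangle|\le MC_{\mu_{0}}(-\mathcal{E}_{\omega}(v))^{1/2}$ by~\ref{eq:bound on integr pairing}, and $t\mapsto-t+MC_{\mu_{0}}t^{1/2}$ is bounded above, whence $E(f\mu_{0})<\infty$. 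Thus $\mu_{u}\in E_{1}$ and the second inequality gives $\sup_{E_{1}}F_{-\gamma}\ge\sup_{\mathcal{H}}\mathcal{G}_{-\gamma}$; and $\sup_{\mathcal{H}}\mathcal{G}_{-\gamma}=\sup_{\mathcal{E}^{1}}\mathcal{G}_{-\gamma}$ since any $u\in\mathcal{E}^{1}$ has $u_{j}\downarrow u$ in $\mathcal{H}$ with $\mathcal{E}_{\omega}(u_{j})\to\mathcal{E}_{\omega}(u)$ (Prop~\ref{pro:non-pluripol ma}) and $\int e^{-\gamma u_{j}}\mu_{0}\uparrow\int e^{-\gamma u}\mu_{0}$, so $\mathcal{G}_{-\gamma}(u_{j})\to\mathcal{G}_{-\gamma}(u)$. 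This proves~\ref{eq:same optimal values intro}; moreover, since by Jensen $\log\int e^{-\gamma u}\mu_{0}\ge-\gamma\langle u,\mu_{0}\rangle$, one gets $\mathcal{E}_{\omega}(u)-\langle u,\mu_{0}\rangle\le\mathcal{G}_{-\gamma}(u)$, so boundedness of $\mathcal{G}_{-\gamma}$ already forces $E(\mu_{0})<\infty$.

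\textbf{Coercivity.} One direction is immediate: if $\mathcal{G}_{-\gamma}(u)\le-\epsilon J_{\omega}(u)+C$ for normalised $u$, then for $\mu\in E_{1}$ with potential $u_{\mu}$ (normalised by $\int u_{\mu}\omega^{n}=0$) the first inequality together with Lemma~\ref{lem:energy as i minus j} and~\ref{eq:ineq for i and j} gives $F_{-\gamma}(\mu)\le\mathcal{G}_{-\gamma}(u_{\mu})\le-\epsilon J_{\omega}(u_{\mu})+C\le-(\epsilon/n)E(\mu)+C$, so $-F_{-\gamma}$ is coercive. For the converse, I would use that on $E_{1}$ the statement ``$-F_{-\gamma}$ coercive w.r.t.\ energy'' is literally the same as ``$F_{-(\gamma+\delta)}=E-\tfrac{1}{\gamma+\delta}D$ is bounded above for some $\delta>0$'' (both amount to $D\ge(\gamma+\delta)E-C$), and then~\ref{eq:same optimal values intro} applied at the shifted parameter shows this equivalent to $\mathcal{G}_{-(\gamma+\delta)}$ being bounded above for some $\delta>0$; it then remains to see that boundedness of $\mathcal{G}_{-(\gamma+\delta)}$ self-improves to coercivity of $-\mathcal{G}_{-\gamma}$, a quantitative Hölder/rescaling estimate on the exponential integrals exploiting the strictly sub-critical gap between $\gamma$ and $\gamma+\delta$. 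I expect this self-improvement to be the point demanding the most care. More broadly, the real work throughout lies in running these dualities with no regularity beyond finite energy and finite entropy --- extending the entropy Legendre bound to $\mathcal{E}^{1}$, identifying the extremal potential via Theorem~\ref{thm:var sol of ma}, and controlling the energy of the Gibbs measures --- rather than in the formal duality itself.
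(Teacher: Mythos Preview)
Your proof of the two pointwise inequalities (the third bullet point) and their equality cases is literally the paper's own argument: for the first inequality use the exact extremality $E(MA(u))=\mathcal{E}_\omega(u)-\langle u,MA(u)\rangle$ from Theorem~\ref{thm:var sol of ma} together with the sub-optimality bound $\tfrac1\gamma D(\mu)\ge\mathcal{L}_\gamma^-(u)-\langle u,\mu\rangle$ (extended to $\mathcal{E}^1$ by decreasing approximation), and for the second swap the roles via Lemma~\ref{lem:sup of l attained}. The Jensen argument for $E(\mu_0)<\infty$ is also the paper's.

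The one genuine organisational difference is in the equality of suprema. The paper works abstractly on $C^0(X)$ using the projection $P$ and the identity $E=(\mathcal{E}_\omega\circ P_\omega)^*$ from Proposition~\ref{pro:legendre duality for energies}, then passes from $C^0(X)$ to $\mathcal{H}(X,\omega)$ by the monotonicity of $P$ and $\mathcal{L}$. You instead bootstrap directly from the pointwise inequalities: $\sup F\le\sup\mathcal{G}$ since every $\mu\in E_1$ is $MA(u_\mu)$, and $\sup F\ge\sup_{\mathcal{H}}\mathcal{G}=\sup_{\mathcal{E}^1}\mathcal{G}$ since for $u\in\mathcal{H}$ the Gibbs measure $\mu_u$ has finite energy (your verification via~\eqref{eq:bound on integr pairing} is correct). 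This is a bit more streamlined---the projection $P$ never enters---at the price of the extra finite-energy check for $\mu_u$, which the paper sidesteps by staying on $C^0(X)$. Both routes give the refined statement about restricting to volume forms and K\"ahler potentials when $\mu_0$ is a volume form.

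On coercivity (this is Corollary~\ref{cor:coerc}, not part of the theorem itself): you correctly reduce to showing that boundedness of $\mathcal{G}_{-(\gamma+\delta)}$ self-improves to coercivity of $-\mathcal{G}_{-\gamma}$, but leave that step as an expectation. The paper closes it concretely: for $u$ normalised by $\int u\,\omega^n=0$, apply the $\mathcal{G}_{-(\gamma+\delta)}$ bound to $v=u/(1+\delta)$ (still $\omega$-psh since $\delta>0$) and invoke Ding's scaling inequality $J_\omega(tu)\le t^{1+1/n}J_\omega(u)$ for $0<t<1$, giving $\tfrac1\gamma\log\int e^{-\gamma u}\mu_0\le(1+\delta)^{-1/n}J_\omega(u)+C$ with $(1+\delta)^{-1/n}<1$.
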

\begin{itemize}
\item The suprema coincide
\begin{equation}
\sup_{\mu\in E_{1}(X,\omega)}F(\mu)=\sup_{u\in\mathcal{E}^{1}(X,\omega)}\mathcal{G}(u)\label{eq:same sup for beta neg}
\end{equation}
 
\item The following inequalities hold for any $u\in\mathcal{E}^{1}(X,\omega)$
\begin{equation}
F(MA(u))\leq\mathcal{G}(u)\label{eq:f smal then g}
\end{equation}
 and 
\begin{equation}
F(e^{-\gamma u}\mu_{0}/\int e^{-\gamma u}\mu_{0})\geq\mathcal{G}(u)\label{eq:lower bound on F by G}
\end{equation}
 Equality in any of the two inequalities above holds iff $u$ is a
solution of the  equation \ref{eq:normalized me} with $\beta=-\gamma$
(and hence equalities then hold in \emph{both} inequalities above). \end{itemize}
\begin{proof}
\emph{(of the first point):}

First recall the Legendre transform relations $E(\mu)=(\mathcal{E\circ}P)^{*}$
and $\frac{1}{\gamma}D(\mu)=\mathcal{L_{\gamma}}^{-}(\mu)^{*}$(see
section \emph{\ref{sub:The-relative-entropy}}). Let us first prove
\begin{equation}
\sup_{\mu\in E_{1}(X,\omega)}F(\mu)\geq\sup_{u\in\mathcal{E}^{1}(X,\omega)}\mathcal{G}(u)\label{eq:sup geq sup}
\end{equation}
 For the sake of notational simplicity we assume that $\gamma=1$
and simply write 
\[
\mathcal{L}(u):=\mathcal{L}_{-\gamma}^{-}(u):=-\log\int_{X}e^{-u}\mu_{0}
\]
 defining a concave functional on $C^{0}(X).$ First note that it
follows immediately from the definition of the Legendre transforms
that, 
\[
(\mathcal{E\circ}P)-\mathcal{L}(u)\geq c\,(\mbox{on \ensuremath{C^{0}(X))}}\Rightarrow(\mathcal{E\circ}P)^{*}(\mu)-\mathcal{L}^{*}(\mu)\geq c
\]
 and hence 
\[
\sup_{\mu\in E_{1}(X,\omega)}F(\mu)\geq\sup_{u\in C_{0}(X)}((\mathcal{E\circ}P)-\mathcal{L})(u)
\]
 Next, observe that 
\[
\sup_{u\in C^{0}(X)}((\mathcal{E\circ}P)-\mathcal{L})(u)=\sup_{u\in\mathcal{H}(X,\omega)}(\mathcal{E}-\mathcal{L})(u)
\]
 where the sup in the rhs may also be taken over $\mathcal{E}^{1}(X,\omega)$
Indeed, first using that the projection $P$ and $\mathcal{L}$ are
increasing with respect to the usual order relation on functions we
have 
\[
(\mathcal{E\circ}P)-\mathcal{L})(u)=(\mathcal{E}-\mathcal{L})(Pu)+\mathcal{L\circ}P-\mathcal{L\le}(\mathcal{E}-\mathcal{L})(Pu)
\]
 Hence comparing the value at $u$ in the lhs below with the value
at $Pu$ in the rhs below gives 
\[
\sup_{u\in C^{0}(X)}((\mathcal{E\circ}P)-\mathcal{L})(u)=\sup_{u\in C^{0}(X)\cap\mathcal{E}^{1}(X,\omega)}(\mathcal{E}-\mathcal{L})(u)
\]
 Finally, by Prop \ref{pro:(Demailly-ref).-The} any $u\in\mathcal{E}(X,\omega$
can be written as a decreasing limit of elements in $\mathcal{H}(X,\omega).$
Hence, by the continuity of $\mathcal{E}$ under such limits and Lebesgue's
monotone convergence theorem the restriction to $C^{0}(X)$ in the
rhs above may be removed, finishing the proof of the claim \ref{eq:sup geq sup}.

The reversed inequality in \ref{eq:sup geq sup} is proved by interchanging
the roles of $E(=(\mathcal{E\circ}P)^{*})$ and $(\mathcal{E\circ}P)$
and the roles of $\mathcal{L}^{*}$ and $\mathcal{L}$ and using the
duality relations in Proposition \ref{pro:legendre duality for energies}
and section \ref{sub:The-relative-entropy}. This gives, just as above,
\[
\sup_{\mu\in E_{1}(X,\omega)}F(\mu)\leq\sup_{u\in C^{0}(X)}((\mathcal{E\circ}P)-\mathcal{L})(u)=\sup_{u\in\mathcal{H}(X,\omega)}(\mathcal{E}-\mathcal{L})(u)
\]
 which finishes the proof of the inequality in the first point of
the theorem. The fact that the sup over $E_{1}(X,\omega)$ may be
taken over the subspace of volume forms will be given in the proof
of the third point.

\emph{Proof of the second point:}

Let us first prove that if $u_{\mu}\in\mathcal{E}^{1}(X,\omega),$
then 
\begin{equation}
F(MA(u_{\mu}))\leq\mathcal{G}(u_{\mu})\label{eq:pf of fourth pt}
\end{equation}
 with equality iff $u$ solves equation \ref{eq:normalized me}. To
this end write $\mu:=MA(u_{\mu})=d\mathcal{E}_{|u_{\mu}}.$ Then,
by definition, 
\[
F(\mu)=\mathcal{E}(u_{\mu})-\left\langle u_{\mu},\mu\right\rangle -\sup_{u\in C^{0}(X)}(\mathcal{L}(u)-\left\langle u,\mu\right\rangle )\leq\mathcal{E}(u_{\mu})-\left\langle u_{\mu},\mu\right\rangle -(\mathcal{L}(u_{j})-\left\langle u_{j},\mu\right\rangle )
\]
 for any $u_{j}\in C^{0}(X).$ In particular, taking continuous functions
$u_{j}$ decreasing to $u_{\mu}$ and letting $j\rightarrow\infty$
and using the monotone convergence theorem proves the inequality \ref{eq:pf of fourth pt}.
Moreover, equality above clearly holds iff $u_{\mu}$ realizes the
sup defining $\mathcal{L}^{*}(\mu).$ By Lemma \ref{lem:sup of l attained}
this happens, since we assume that $\int e^{-u_{\mu}}\mu_{0}$ is
finite, iff 
\[
\mu=e^{-u_{\mu}}/\int e^{-u_{\mu}}\mu_{0}
\]
 which finishes the proof of the equality case in \ref{eq:pf of fourth pt}.

Next, to prove the inequality \ref{eq:lower bound on F by G} first
observe that, as explained above, setting $\mu':=e^{-u'}/\int e^{-u'}\mu_{0}$
with $u'\in\mathcal{E}^{1}(X,\omega)$ gives 
\[
F(\mu')=\sup_{u\in\mathcal{E}^{1}(X,\omega)}(\mathcal{E}(u)-\left\langle u,\mu'\right\rangle )-(\mathcal{L}(u')-\left\langle u',\mu'\right\rangle )\geq\mathcal{E}(u')-\mathcal{L}(u')=\mathcal{G}(u')
\]
 since $u'$ is a candidate for the sup. Moreover, by Theorem \ref{thm:var sol of ma}
equality holds iff $MA(u')=\mu'$ which means that $u'$ is a solution
of the  equation \ref{eq:normalized me}. As for the case when $n=1$
we take $u'$ continuous, but without assuming $\omega_{u'}\geq0.$
We can then repeat the same argument as above but taking the sup above
over $C^{0}(X)$ instead of $\mathcal{E}^{1}(X,\omega)$ (see remark
\ref{rem:legendre when n is one}).

Note that a necessery condition for $F_{\beta}$ to be bounded from
above is that $\mu_{0}$ has finite energy. Indeed, by Jensen's inequality
$\mathcal{E}_{\omega}(u)-\int_{X}u\mu_{0}\leq\mathcal{G}_{\beta}(u)$
which by the first theorem above is bounded from above and hence it
follows from the definition that $E(\mu_{0})<\infty.$ 
\end{proof}

\subsubsection{\label{sub:Intermezzo:-properness-vs}Intermezzo: properness vs coercivity}

Before continuing we we will briefly discuss some relations between
properness and coercivity of the functionals $\beta F_{\beta}$ and
$\beta G_{\beta}$ that will not be used elsewhere. It follows immediately
from inequality \ref{eq:f smal then g} above that if $\beta G_{\beta}$
is proper (wrt energy) then so is $\beta F_{\beta}.$ It would be
interesting to know if the converse is true? In the Kähler-Einstein
setting this was indeed shown by Tian, see \cite{ti} (Thm 7.13).
The proof is indirect and uses the continuity method to first establish
the existence of a Kähler-Einstein metric $\omega_{KE}$. Using the
existence of $\omega_{KE},$ reversing the continuity method and also
smoothing by the Kähler-Ricci flow then gives the properness of $\beta G_{\beta}$
(in this case $\beta=-1).$ As conjectured by Tian and subsequently
established in \cite{p-s+} the previous argument can be refined to
give that $\beta G_{\beta}$ is even coercive. All in all this in
particular shows that $\beta F_{\beta}$ is coercive iff $\beta G_{\beta}$
is. As next observed this latter property can be obtained as a corollary
of Theorem \ref{thm:f and g when beta neg} in the setting of a \emph{general}
measure $\mu_{0}:$ 
\begin{cor}
\label{cor:coerc}Let $\mu_{0}$ be a measure on $X$ and $\beta(=-\gamma)$
a negative number. Then the corresponding functional $\beta F_{\beta}$
is coercive iff $\beta G_{\beta}$ is coercive. \end{cor}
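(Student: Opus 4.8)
The plan is to prove the two implications separately; they are very unequal in difficulty. Throughout write $\beta=-\gamma$, $\beta F_{\beta}:=\beta\cdot F_{\beta}=-\gamma E+D$ and $\beta G_{\beta}:=\beta\cdot\mathcal G_{\beta}$, so that $\beta G_{\beta}(u)=-\gamma\mathcal E_{\omega}(u)-\log\int_{X}e^{-\gamma u}\mu_{0}$.

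\emph{Step 1: $\beta G_{\beta}$ coercive $\Rightarrow$ $\beta F_{\beta}$ coercive.} This is immediate from inequality \ref{eq:f smal then g}. Assume $\beta G_{\beta}(u)\ge aJ_{\omega}(u)-b$ on $\mathcal E^{1}(X,\omega)$ with $a,b>0$. Given $\mu\in E_{1}(X,\omega)$ with potential $u_{\mu}\in\mathcal E^{1}(X,\omega)$ (Theorem \ref{thm:var sol of ma}), \ref{eq:f smal then g} gives $F_{\beta}(\mu)=F_{\beta}(MA(u_{\mu}))\le\mathcal G_{\beta}(u_{\mu})$, and multiplying by $\beta<0$ yields $\beta F_{\beta}(\mu)\ge\beta\mathcal G_{\beta}(u_{\mu})=\beta G_{\beta}(u_{\mu})\ge aJ_{\omega}(u_{\mu})-b$. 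By Lemma \ref{lem:energy as i minus j}, $E(\mu)=(I_{\omega}-J_{\omega})(u_{\mu})\le nJ_{\omega}(u_{\mu})$, so $\beta F_{\beta}(\mu)\ge\frac{a}{n}E(\mu)-b$. This is the claimed coercivity.

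\emph{Step 2: $\beta F_{\beta}$ coercive $\Rightarrow$ $\beta G_{\beta}$ coercive — reduction to a slightly more negative parameter.} If $\beta F_{\beta}(\mu)\ge a_{0}E(\mu)-b_{0}$ on $E_{1}(X,\omega)$, then $D(\mu)\ge(\gamma+a_{0})E(\mu)-b_{0}$, and since $E\ge0$, for every $\gamma'\in(\gamma,\gamma+a_{0}]$ we get $F_{-\gamma'}(\mu)=E(\mu)-\frac{1}{\gamma'}D(\mu)\le\frac{b_{0}}{\gamma'}$, i.e. $\sup_{E_{1}}F_{-\gamma'}<\infty$. By \ref{eq:same sup for beta neg} of Theorem \ref{thm:f and g when beta neg} applied at the parameter $\gamma'$, also $B:=\sup_{\mathcal E^{1}}\mathcal G_{-\gamma'}<\infty$. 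Now fix $u\in\mathcal E^{1}(X,\omega)$ normalized by $\sup_{X}u=0$, set $\lambda_{0}:=\gamma/\gamma'\in(0,1)$, note $\lambda_{0}u\in\mathcal E^{1}(X,\omega)$ by concavity of $\mathcal E_{\omega}$ (Prop \ref{pro:energy is ups}), and evaluate $\mathcal G_{-\gamma'}\le B$ at $\lambda_{0}u$: since $\gamma'\lambda_{0}=\gamma$,
\[
\mathcal E_{\omega}(\lambda_{0}u)+\tfrac{1}{\gamma'}\log\int_{X}e^{-\gamma u}\mu_{0}\le B .
\]
Substituting this into the formula for $\beta G_{\beta}(u)$ gives $\beta G_{\beta}(u)\ge-\gamma\mathcal E_{\omega}(u)+\gamma'\mathcal E_{\omega}(\lambda_{0}u)-\gamma'B$. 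The naive concavity estimate $\mathcal E_{\omega}(\lambda_{0}u)\ge\lambda_{0}\mathcal E_{\omega}(u)$ here only returns $\beta G_{\beta}(u)\ge-\gamma'B$, i.e. plain boundedness of $\mathcal G_{\beta}$; the whole point is to squeeze out a genuinely linear lower bound in $-\mathcal E_{\omega}(u)$.

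\emph{Step 3: the quantitative estimate (the main obstacle).} Write $\mathcal E_{\omega}(\lambda u)=-\int_{0}^{\lambda}m(s)\,ds$, where $m(\lambda)=\int_{X}(-u)\,MA(\lambda u)\ge0$ is nondecreasing, and put $S:=-\mathcal E_{\omega}(u)=\int_{0}^{1}m$. Expanding $\omega_{\lambda u}=(1-\lambda)\omega+\lambda\omega_{u}$ gives the Bernstein-type expansion $m(\lambda)=\sum_{k=0}^{n}\binom{n}{k}(1-\lambda)^{n-k}\lambda^{k}c_{k}$ with $c_{k}:=\frac{1}{Vn!}\int_{X}(-u)\,\omega^{n-k}\wedge\omega_{u}^{k}\ge0$. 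I would use four facts: (i) $c_{0}=-\frac{1}{Vn!}\int_{X}u\,\omega^{n}\le C_{0}$, from the standard bound $\sup_{X}u-C_{0}\le\frac{1}{Vn!}\int_{X}u\,\omega^{n}$ and $\sup_{X}u=0$; (ii) $c_{0}\le c_{1}\le\cdots\le c_{n}$, by integration by parts ($c_{k}-c_{k-1}=\frac{1}{Vn!}\int du\wedge d^{c}u\wedge\omega^{n-k}\wedge\omega_{u}^{k-1}\ge0$); (iii) $\int_{0}^{1}m=\frac{1}{n+1}\sum_{k}c_{k}$, so $\sum_{k}c_{k}=(n+1)S$; (iv) $m(\lambda)\ge c_{n}\lambda^{n}$. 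From (i)--(iii), $c_{n}\ge\frac{n+1}{n}S-\frac{C_{0}}{n}$; then (iv) gives $\int_{\lambda_{0}}^{1}m\ge\frac{1-\lambda_{0}^{n+1}}{n+1}\,c_{n}$. Feeding these into the bound of Step 2, together with the identity $-\gamma\mathcal E_{\omega}(u)+\gamma'\mathcal E_{\omega}(\lambda_{0}u)=-(\gamma'-\gamma)S+\gamma'\int_{\lambda_{0}}^{1}m$, produces $\beta G_{\beta}(u)\ge cS-d$ with
\[
c=\gamma'\Big[\tfrac{1-\lambda_{0}^{n+1}}{n}-(1-\lambda_{0})\Big]=\tfrac{\gamma'(1-\lambda_{0})}{n}\big[(1+\lambda_{0}+\cdots+\lambda_{0}^{n})-n\big],
\]
which is \emph{strictly positive} exactly when $1+\lambda_{0}+\cdots+\lambda_{0}^{n}>n$. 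Since this quantity tends to $n+1$ as $\lambda_{0}\to1^{-}$, it holds once $\gamma'$ is chosen close enough to $\gamma$ — which is the reason for extracting $\gamma'$ near $\gamma$, and not at $\gamma+a_{0}$, in Step 2. Finally, for $\sup_{X}u=0$ one has $J_{\omega}(u)\le-\mathcal E_{\omega}(u)=S$, and both $\beta G_{\beta}$ and $J_{\omega}$ are $\R$-invariant, so $\beta G_{\beta}(u)\ge cJ_{\omega}(u)-d$ for all $u\in\mathcal E^{1}(X,\omega)$: $\beta G_{\beta}$ is coercive. The delicate part of the whole argument is this last comparison — verifying that $\mathcal E_{\omega}(\lambda_{0}u)$ really beats the naive concavity bound — for which the Bernstein expansion of $m$, the monotonicity of the $c_{k}$, and the a priori bound on $c_{0}$ are exactly the tools I expect to need.
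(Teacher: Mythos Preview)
Your proof is correct. Both directions agree with the paper in overall strategy: the easy implication ($\beta G_\beta$ coercive $\Rightarrow$ $\beta F_\beta$ coercive) is exactly the paper's argument via \ref{eq:f smal then g} and Lemma \ref{lem:energy as i minus j}; and for the hard implication you, like the paper, pass to a slightly more negative parameter $\gamma'>\gamma$, use \ref{eq:same sup for beta neg} to get $\sup\mathcal G_{-\gamma'}<\infty$, and then evaluate at the rescaled potential $\lambda_0 u$ with $\lambda_0=\gamma/\gamma'$.

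The genuine difference is in how the resulting bound is converted into coercivity. The paper normalizes by $\int u\,\omega^n=0$ and invokes in one line Ding's scaling inequality $J_\omega(tu)\le t^{1+1/n}J_\omega(u)$ for $0<t<1$ (remark 2 in \cite{din}), which immediately yields $(1+\delta)J_\omega(u/(1+\delta))\le (1+\delta)^{-1/n}J_\omega(u)$ and hence the desired linear gap. You instead normalize by $\sup_X u=0$ and reprove an equivalent estimate from scratch: expanding $m(\lambda)=\int(-u)\,MA(\lambda u)$ in Bernstein form, using the monotonicity $c_0\le\cdots\le c_n$, the identity $\sum_k c_k=(n+1)S$, the compactness bound $c_0\le C_0$, and the crude lower bound $m(\lambda)\ge c_n\lambda^n$. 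This is more laborious but entirely self-contained; it essentially rederives (a weak form of) Ding's inequality. The paper's route is considerably shorter; yours has the virtue of not appealing to an external scaling lemma and of making the role of the mixed energies $c_k$ explicit. Note also that with the paper's normalization $\int u\,\omega^n=0$ one has $c_0=0$, which would slightly streamline your Step 3 by removing the $C_0$ term.
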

\begin{proof}
Assume that $\beta F_{\beta}$ is coercive or equivalently that $F_{\beta(1+\delta)}$
is bounded from above for some $\delta>0.$ Then it follows from Theorem
\ref{thm:f and g when beta neg} that $G_{\beta(1+\delta)}$ is also
bounded from above, i.e. for any $\omega-$psh function $v$ we have
\[
\frac{1}{\gamma(1+\delta)}\log\int e^{-\gamma(1+\delta)v}\mu_{0}\leq-\mathcal{E}_{\omega}(v)+C
\]
 To prove coercivity for $\beta G_{\beta}(u)$ we let $u$ be an arbitrary
$\omega-$psh function. By scale invariance it will be enough to consider
the case $\int u\omega^{n}=0,$ so that $-\mathcal{E}_{\omega}(u)=J_{\omega}(u).$
Then $v:=u/(1+\delta)$ is also $\omega-$psh function (since $\delta>0)$
such that $\int v\omega^{n}=0$ and hence applying the previous inequality
to $v$ gives 
\[
\frac{1}{\gamma}\log\int e^{-\gamma u}\mu_{0}-C\leq1+\delta)J_{\omega}(u/(1+\delta)\leq(1+\delta)^{-1/n}J_{\omega}(u),
\]
 where the last inequality follows from $J_{\omega_{0}}(tu)\leq t^{1+1/n}J_{\omega_{0}}(u)$
if $0<t<1$ (see remark 2 in \cite{din}). Since, $(1+\delta)^{-1/n}<1$
this shows that $-G_{-\gamma}$ is also coercive. The reversed implication
follows immediately from Theorem \ref{thm:f and g when beta neg}. 
\end{proof}

\subsubsection{A continuity lemma}

We will next prove a useful continuity result, using a minor modification
of the proof of Lemma 1.14 in \cite{b-b-w} (see also the proof of
the implication $(iii)\Longrightarrow(i)$ in Thm 3.1 in \cite{bbgz}).
\begin{lem}
\label{lem:kont of exp integral with bd}Assume that $\psi_{j}\rightarrow\psi$
in $PSH(X,\omega)$ (in the $L^{1}(X)-$topology) and that there is
a positive number $\delta$ such that 
\[
\int_{X}e^{-(\gamma+\delta)\psi_{j}}\mu_{0}\leq C
\]
 where the measure $\mu_{0}$ does not charge pluripolar sets. Then
\begin{equation}
\int_{X}e^{-\gamma\psi_{j}}\mu_{0}\rightarrow\int_{X}e^{-\gamma\psi}\mu_{0}\label{eq:conv of exp integral in lemma}
\end{equation}
 for any real number $\gamma.$ \end{lem}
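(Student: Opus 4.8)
The plan is to prove the continuity statement \ref{eq:conv of exp integral in lemma} by a uniform-integrability argument, exactly in the spirit of Lemma 1.14 in \cite{b-b-w}, exploiting the extra $\delta$ of integrability as a reservoir.

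\textbf{Setup and reductions.} First I would normalize the potentials by replacing $\psi_j$ with $\psi_j - \sup_X \psi_j$; since $\psi_j \to \psi$ in $L^1$ implies $\sup_X \psi_j \to \sup_X \psi$ (a standard fact for $\omega$-psh functions, the sup being comparable to the $L^1$-norm modulo constants), this only changes both sides of \ref{eq:conv of exp integral in lemma} by a bounded multiplicative factor converging to the right limit, so it suffices to treat the normalized case $\sup_X \psi_j = 0$. Passing to a subsequence we may also assume $\psi_j \to \psi$ a.e.\ with respect to $\mu_0$ — here I use that $\mu_0$ does not charge pluripolar sets, so that the $L^1(\omega^n)$-convergence, which after a subsequence gives convergence quasi-everywhere, actually gives $\mu_0$-a.e.\ convergence (more carefully: a subsequence converges in capacity, hence off a pluripolar set, hence $\mu_0$-a.e.). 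Since the claim is about convergence of a sequence of real numbers, proving that every subsequence has a further subsequence along which \ref{eq:conv of exp integral in lemma} holds is enough.

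\textbf{Uniform integrability.} The heart of the argument: the bound $\int_X e^{-(\gamma+\delta)\psi_j}\mu_0 \leq C$ forces the family $\{e^{-\gamma\psi_j}\}$ to be uniformly $\mu_0$-integrable. Indeed, on the set $\{e^{-\gamma\psi_j} > M\}$ one has $e^{-\gamma\psi_j} \leq M^{-\delta/\gamma}\, e^{-(\gamma+\delta)\psi_j}$ when $\gamma>0$ (and an entirely analogous elementary inequality, using $\psi_j \leq 0$, in the cases $\gamma \leq 0$, where in fact $e^{-\gamma\psi_j}$ is bounded above by $1$ for $\gamma<0$ and the statement is trivial), so
\[
\int_{\{e^{-\gamma\psi_j}>M\}} e^{-\gamma\psi_j}\mu_0 \;\leq\; M^{-\delta/\gamma}\!\int_X e^{-(\gamma+\delta)\psi_j}\mu_0 \;\leq\; C\,M^{-\delta/\gamma} \;\xrightarrow[M\to\infty]{}\;0,
\]
uniformly in $j$. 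Combined with the $\mu_0$-a.e.\ convergence $e^{-\gamma\psi_j}\to e^{-\gamma\psi}$ along the subsequence, Vitali's convergence theorem yields $\int_X e^{-\gamma\psi_j}\mu_0 \to \int_X e^{-\gamma\psi}\mu_0$, which is \ref{eq:conv of exp integral in lemma} for the subsequence; together with the subsequence argument above this proves the lemma.

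\textbf{Expected obstacle.} The only genuinely delicate point is the step ``$L^1(\omega^n)$-convergence of $\omega$-psh functions, plus $\mu_0$ not charging pluripolar sets, implies $\mu_0$-a.e.\ convergence (after a subsequence).'' This is where the hypothesis on $\mu_0$ is essential and must be used correctly: one passes to a subsequence converging in capacity (using that a decreasing sequence of $\omega$-psh functions converges in capacity, after the usual reduction via $\tilde\psi_k := (\sup_{j\geq k}\psi_j)^*$, which decreases to $\psi$ and agrees with $\psi_j$ off pluripolar sets in the limit), and a convergence-in-capacity statement gives convergence off an arbitrarily-small-capacity set, hence off a pluripolar set by a diagonal argument, hence $\mu_0$-a.e. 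This is precisely the mechanism of Lemma 1.14 in \cite{b-b-w}, so I would invoke that proof with the minor modification that $\mu_0$ is a general non-pluripolar measure rather than a volume form; no new idea is needed beyond checking that only the non-pluripolar property of Lebesgue measure was used there. The rest — the elementary inequality and Vitali's theorem — is routine.
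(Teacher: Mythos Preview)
Your proof is correct and takes a genuinely different route from the paper's. The paper argues via weak $L^p$-compactness: it observes that $u_j:=e^{-\gamma\psi_j}$ is bounded in $L^p(\mu_0)$ for some $p>1$, passes to a weak limit $v$, upgrades this to strong $L^p$-convergence of convex combinations via a Mazur/Hahn--Banach argument (as in Lemma~1.14 of \cite{b-b-w}), and then identifies $v$ with $e^{-\gamma\psi}$ using Hartogs' lemma and the non-pluripolar hypothesis on $\mu_0$. You instead extract uniform integrability of $\{e^{-\gamma\psi_j}\}$ directly from the extra $\delta$ of exponential integrability and invoke Vitali once $\mu_0$-a.e.\ convergence is in hand. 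Your route is more elementary---it avoids the weak-compactness/convex-combination machinery entirely---and makes transparent exactly where the $\delta$-reservoir is spent. Both arguments rest on the same delicate point, which you rightly flag: passing from $L^1(\omega^n)$-convergence of $\omega$-psh functions to $\mu_0$-a.e.\ convergence along a subsequence. Your sketch via convergence in capacity is the right mechanism (and indeed is what underlies the paper's invocation of Hartogs' lemma for non-pluripolar $\mu_0$), though the reduction through $\tilde\psi_k:=(\sup_{j\geq k}\psi_j)^*$ alone only delivers $\limsup\psi_j=\psi$ off a pluripolar set; the full two-sided convergence in capacity for normalized $\omega$-psh functions (e.g.\ via truncation and the uniform capacity estimates for sublevel sets in \cite{g-z}) is what actually closes the argument. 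With that clarification your proof is complete.
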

\begin{proof}
Let $u_{j}:=e^{-\gamma\psi_{j}}$ and $u:=e^{-\gamma u}.$ By assumption
there is a constant $C$ and $p>1$ such $\left\Vert u_{j}\right\Vert _{L^{p}(\mu_{0})}\leq C.$
Hence, it follows from general functional analysis (using that the
unit ball in $L^{p}(\mu_{0})$ is weakly compact and the Hahn-Banach
separation theorem (compare the proof of Lemma 1.14 in \cite{b-b-w})
that there is a sequence $v_{j}$ of convex combinations of $u_{j}$
such that $v_{j}$ converges strongly to $v\in L^{p}(\mu_{0}).$ In
particular after replacing $u_{j}$ with any subsequence such the
the first integral in \ref{eq:pf of cont lemma} converges we get
\begin{equation}
\lim_{j}\int u_{j}\mu_{0}=\lim_{j}\int v_{j}\mu_{0}=\int v\mu_{0}\label{eq:pf of cont lemma}
\end{equation}
Since $\mu$ does not charge pluripolar sets Hartog's lemma \cite{g-z}
gives that $\limsup\psi_{j}=\psi$ a.e. wrt $\mu_{0},$ i.e. $\liminf u_{j}=u$
a.e. wrt $\mu_{0}$ so that $\liminf v_{j}\geq u$ a.e. wrt $\mu_{0}.$
But then \ref{eq:pf of cont lemma} and the $L^{p}(\mu_{0})-$ convergence
of $v_{j}$ forces $v=u$ a.e. wrt $\mu_{0}.$ According to \ref{eq:pf of cont lemma}
that ends the proof of the lemma.
\end{proof}

\subsubsection{Existence and convergence of maximizers for the free energy}

Next, we will prove one of the main results of the present paper showing
that coercivity of the   functional $F$ is sufficient for the existence
of a maximizer. 
\begin{thm}
\label{thm:proper gives max of F beta neg}Let $\beta=-\gamma<0.$
Suppose that the functional $-F_{-\gamma}$ is coercive (wrt energy)
or equivalently that $F_{-\gamma-\delta}$ is bounded for some $\delta>0,$
then $F_{-\gamma}$ admits a finite energy maximizer $\mu_{\beta}.$
Moreover, the potential of any maximizer solves the  equation \ref{eq:normalized me}.
More generally, if $\mu_{j}$ is a sequence such that 
\[
F_{-\gamma}(\mu_{j})\rightarrow\sup_{E_{1}(X,\omega)}F_{-\gamma}<\infty
\]
 Then, perhaps after passing to a subsequence, $\mu_{j}$ converges
weakly to a maximizer $\mu_{\beta}.$ If $\mu_{0}$ is a volume form
then the maximizer is smooth.\end{thm}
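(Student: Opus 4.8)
The plan is to exploit the coercivity hypothesis together with the Legendre-duality structure established in Theorem~\ref{thm:f and g when beta neg} and the continuity Lemma~\ref{lem:kont of exp integral with bd}. First I would take a maximizing sequence $\mu_j$ with $\sup_X u_{\mu_j}=0$ (using $\R$-invariance of the relevant functionals) and $F_{-\gamma}(\mu_j)\to \sup_{E_1(X,\omega)}F_{-\gamma}=:S<\infty$. Write $F_{-\gamma}(\mu_j)=E(\mu_j)-\tfrac1\gamma D(\mu_j)$; since $D\geq 0$, we get $E(\mu_j)\geq F_{-\gamma}(\mu_j)$ bounded below, but that is the wrong direction. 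The point is rather that coercivity of $-F_{-\gamma}$, i.e. $-F_{-\gamma}\geq aE-b$, gives $aE(\mu_j)-b\leq -F_{-\gamma}(\mu_j)\to -S$, so $E(\mu_j)$ is \emph{bounded above}; hence the potentials $u_{\mu_j}$ lie in a compact subset of $\mathcal{E}^1(X,\omega)$ (the set $\{\mathcal{E}_\omega\geq -C\}\cap\{\sup_X=0\}$ is compact by upper semicontinuity of $\mathcal{E}_\omega$, Prop.~\ref{pro:energy is ups}, and compactness of $PSH(X,\omega)/\R$). Passing to a subsequence, $u_{\mu_j}\to u_\infty$ in $L^1$, and correspondingly $\mu_j=MA(u_{\mu_j})\to MA(u_\infty)=:\mu_\infty$ weakly by Prop.~\ref{pro:non-pluripol ma} (after checking the decreasing-limit hypothesis via a standard diagonal/regularization trick, or arguing directly that $E$ being lsc forces $u_{\mu_j}\to u_{\mu_\infty}$ as in Theorem~\ref{thm:var sol of ma}).

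Next I would show $\mu_\infty$ is a maximizer. Since $E$ is lsc (it is a Legendre transform), $E(\mu_\infty)\leq\liminf E(\mu_j)$; I need the \emph{opposite} type of control on the entropy term, namely upper semicontinuity of $\mu\mapsto -\tfrac1\gamma D(\mu)$ along the sequence, equivalently lower semicontinuity of $D$ — but $D$ is also a Legendre transform and hence lsc, which again goes the wrong way. The resolution is to use the stronger hypothesis that $F_{-\gamma-\delta}$ is bounded above. This gives, via the first point of Theorem~\ref{thm:f and g when beta neg} applied with parameter $\gamma+\delta$, a uniform bound $\int_X e^{-(\gamma+\delta)u_{\mu_j}}\mu_0\leq C$ (using that $\mathcal{G}_{-\gamma-\delta}(u_{\mu_j})$ is bounded above by $\sup\mathcal{G}_{-\gamma-\delta}=\sup F_{-\gamma-\delta}<\infty$ together with the lower bound on $\mathcal{E}_\omega(u_{\mu_j})=E(\mu_j)+\langle u_{\mu_j},\mu_j\rangle$, which is controlled by inequality \eqref{eq:bound on integr pairing} and the bound on $E(\mu_j)$). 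Lemma~\ref{lem:kont of exp integral with bd} then gives $\int_X e^{-\gamma u_{\mu_j}}\mu_0\to\int_X e^{-\gamma u_\infty}\mu_0$, so $\mathcal{L}_\gamma^-(u_{\mu_j})\to\mathcal{L}_\gamma^-(u_\infty)$; combined with upper semicontinuity of $\mathcal{E}_\omega$ this yields $\limsup_j\mathcal{G}_{-\gamma}(u_{\mu_j})\leq\mathcal{G}_{-\gamma}(u_\infty)$. Since by \eqref{eq:f smal then g} we have $F_{-\gamma}(\mu_j)\leq\mathcal{G}_{-\gamma}(u_{\mu_j})$ and, by Theorem~\ref{thm:f and g when beta neg}, $\sup F_{-\gamma}=\sup\mathcal{G}_{-\gamma}$, passing to the limit gives $S\leq\mathcal{G}_{-\gamma}(u_\infty)\leq\sup\mathcal{G}_{-\gamma}=S$, so $u_\infty$ maximizes $\mathcal{G}_{-\gamma}$; applying \eqref{eq:lower bound on F by G} to $u_\infty$ then produces a measure $\mu_\infty':=e^{-\gamma u_\infty}\mu_0/\int e^{-\gamma u_\infty}\mu_0$ with $F_{-\gamma}(\mu_\infty')\geq\mathcal{G}_{-\gamma}(u_\infty)=S$, hence $\mu_\infty'$ is a maximizer of $F_{-\gamma}$, and equality in \eqref{eq:lower bound on F by G} forces $MA(u_\infty)=\mu_\infty'$, i.e.\ $u_\infty$ solves the normalized equation \eqref{eq:normalized me}.

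For the statement that the potential of \emph{any} maximizer $\mu$ solves \eqref{eq:normalized me}: set $u_\mu\in\mathcal{E}^1(X,\omega)$ the potential of $\mu$; then $F_{-\gamma}(\mu)\leq\mathcal{G}_{-\gamma}(u_\mu)\leq\sup\mathcal{G}_{-\gamma}=\sup F_{-\gamma}=F_{-\gamma}(\mu)$ using \eqref{eq:f smal then g} and Theorem~\ref{thm:f and g when beta neg}, so equality holds in \eqref{eq:f smal then g}, which by the equality case there means $u_\mu$ solves \eqref{eq:normalized me}. (This also reproves existence: the maximizer $\mu_\infty$ above has $u_{\mu_\infty}=u_\infty$ up to a constant, consistent with the above.) The equivalence ``$-F_{-\gamma}$ coercive $\iff$ $F_{-\gamma-\delta}$ bounded above for some $\delta>0$'' is a general and elementary fact about the behavior of $\mathcal{G}$-type/entropy functionals under rescaling the parameter, proved exactly as in Corollary~\ref{cor:coerc} (scaling $u\mapsto u/(1+\delta)$ and using $J_\omega(tu)\leq t^{1+1/n}J_\omega(u)$); I would record it as a short separate step. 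Finally, smoothness when $\mu_0$ is a volume form is immediate from Proposition~\ref{pro:reg}, since the maximizer's potential solves \eqref{eq:normalized me} and hence (after the normalization $u\mapsto u-\tfrac1\beta\log\int e^{\beta u}\mu_0$) the non-normalized equation \eqref{eq:non-normalized}.

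The main obstacle is the upper semicontinuity of the free energy along the maximizing sequence: the entropy term $D$ is only lower semicontinuous, so one cannot pass to the limit in $F_{-\gamma}=E-\tfrac1\gamma D$ directly. The entire point of routing through the dual functional $\mathcal{G}_{-\gamma}$ — where the delicate term is instead the exponential integral, which \emph{is} continuous under the uniform integrability bound supplied by Lemma~\ref{lem:kont of exp integral with bd} — is precisely to circumvent this, and this is why the stronger hypothesis ($F_{-\gamma-\delta}$ bounded, i.e.\ a strict improvement of the exponent) rather than mere boundedness of $F_{-\gamma}$ is needed to produce the uniform bound $\int e^{-(\gamma+\delta)u_{\mu_j}}\mu_0\leq C$.
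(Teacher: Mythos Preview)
Your argument is essentially the paper's proof: coercivity bounds $E(\mu_j)$ hence $J_\omega(u_j)$ (via Lemma~\ref{lem:energy as i minus j}), the bound on $\mathcal G_{-\gamma-\delta}$ then gives the uniform integrability $\int e^{-(\gamma+\delta)u_j}\mu_0\leq C$, Lemma~\ref{lem:kont of exp integral with bd} together with upper semicontinuity of $\mathcal E_\omega$ shows the $L^1$-limit $u_\infty$ maximizes $\mathcal G_{-\gamma}$, and one concludes the equation---you via the equality cases of Theorem~\ref{thm:f and g when beta neg}, the paper via the projection argument of Theorem~\ref{thm:existence etc when beta pos}, which is equivalent. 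One caveat: your early appeal to Prop.~\ref{pro:non-pluripol ma} for the weak convergence $\mu_j\to MA(u_\infty)$ is not justified (that proposition is only for \emph{decreasing} sequences) and is in fact never used in the rest of your argument; like the paper you should simply work at the level of potentials throughout.
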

\begin{proof}
Let $\mu_{j}$ be a maximizing sequence for $F_{-\gamma},$ as in
the assumptions above. The boundedness assumption of $F_{-(\gamma+\delta)}$
is equivalent to the bound $\gamma F_{-\gamma}\leq-\delta E+C.$ Since,
by assumption, $F_{-\gamma}$ is bounded from below along $\mu_{j}$
it follows immediately that $E(\mu_{j})\leq C'.$ Writing $\mu_{j}=MA(u_{j})$
this means according to Lemma \ref{lem:energy as i minus j}, that
$(I-J)(u_{j})$ and hence $J(u_{j})$ are uniformly bounded: 
\[
J(u_{j})\leq C_{\delta}
\]
 Combining this latter bound with the fact that $G_{\gamma+\delta}$
is also bounded from above (by the first point in Theorem \ref{thm:f and g when beta neg})
gives that 
\begin{equation}
\int_{X}e^{-(\gamma+\delta)u_{j}}\mu_{0}\leq C_{\delta}\label{eq:pf of thm coerc}
\end{equation}
 and hence after adjusting by constants to get $\sup u_{j}=0$ and
passing to a subsequence to make sure that $u_{j}\rightarrow u$ in
$L^{1},$ the convergence \ref{eq:conv of exp integral in lemma}
in Lemma \ref{lem:kont of exp integral with bd} gives, also using
that $\mathcal{E}$ is usc (Prop \ref{pro:energy is ups}) 
\[
\infty>G_{-\gamma}(u)\geq\limsup G_{-\gamma}(u_{j})
\]
 Combining this with the first and second point in Theorem \ref{thm:f and g when beta neg}
gives 
\[
\sup_{\mathcal{E}^{1}(X,\omega)}G_{-\gamma}=\sup_{E_{1}(X,\omega)}F_{-\gamma}=\limsup F_{-\gamma}(\mu_{j})\leq\limsup\mathcal{G}_{-\gamma}(u_{j})\leq G_{-\gamma}(u)<\infty
\]
 and hence $u$ is a maximizer of $G_{-\gamma}$ on $\mathcal{E}^{1}(X,\omega).$
But then it follows precisely as in the proof of Theorem \ref{thm:existence etc when beta pos}
above, using the projection operator $P,$ that $u$ is a solution
of equation \ref{eq:normalized me}. 
\end{proof}

\subsection{The proof of Theorem \ref{thm:main intro} and a refined version}

Apart from the last statement in the theorem concerning properness
the proof follows immediately from combining the theorems established
above. Finally, in the general case when $F_{-\gamma}$ is only assumed
proper the previous proof still applies as long as $\mu_{0}$ satisfies
the following qualitative Moser-Trudinger type inequality: there is
a $\delta>0$ such that for any Kähler potential $u$ 
\begin{equation}
J(u)\leq C\implies\int e^{-(\gamma+\delta)(u-\sup u)}\mu_{0}\leq C'\label{eq:qual m-t ineq for measure}
\end{equation}
where $C$ depends on $\gamma,\delta$ and $C.$ This inequality does
hold in the case when $\mu_{0}=fdV$ with $f\in L^{p}(X,dV)$ for
$p>1$ as follows immediately from Hölder's inequality and the following
stronger property of any volume form $dV:$ 
\begin{equation}
J(u)\leq C\implies I_{t}(u):=\int e^{-t(u-\sup u)}dV\leq C_{t}.\label{eq:pf main in proper case}
\end{equation}
 for\emph{ any }$t>0$ obtained in the proof of Lemma 6.4 in \cite{bbgz},
using Zeriahi's uniform variant of Skoda's theorem \cite{ze}. More
generally, the previous arguments shows the that the following refined
version of the last part of Theorem \ref{thm:main intro} holds:
\begin{thm}
\label{thm:refined conv towards opt} Assume that $\mu_{0}$ satisfies
the qualitative Moser-Trudinger type inequality \ref{eq:qual m-t ineq for measure}
and let $u_{j}$ be a sequence in $\mathcal{E}^{1}(X,\omega)$ such
that $J_{\omega}(u_{j})\leq C$ (or equivalently, $\mathcal{E}_{\omega}(u-\sup u_{j})\geq-C').$
Then $F_{-\gamma}(u_{j})$ is uniformly bounded from above. If furthermore
$u_{j}$ is a maximizing sequence for $F_{-\gamma}$ then $u_{j}-\sup u_{j}$
converges (after perhaps passing to a subsquence) to a maximizer for
$F_{-\gamma}\circ MA.$ \end{thm}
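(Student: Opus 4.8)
The plan is to bootstrap from the duality already established in Theorem \ref{thm:f and g when beta neg}, the semicontinuity of $\mathcal{E}_\omega$ (Proposition \ref{pro:energy is ups}), and the continuity Lemma \ref{lem:kont of exp integral with bd} for exponential integrals; the strict margin $\delta$ in the hypothesis \ref{eq:qual m-t ineq for measure} will be spent precisely to invoke that lemma. Since $F_{-\gamma}\circ MA$, $\mathcal{G}_{-\gamma}$ and $J_\omega$ are all invariant under $u\mapsto u+c$, I would first replace each $u_j$ by its normalization $\tilde u_j:=u_j-\sup_X u_j\le 0$ and work with this sequence; in the stated hypothesis this turns $J_\omega(u_j)\le C$ into $\mathcal{E}_\omega(\tilde u_j)\ge -C'$, which is the form I will use.

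For the boundedness assertion, combine \ref{eq:f smal then g} with the explicit shape of $\mathcal{G}_{-\gamma}$: one has $F_{-\gamma}(MA(\tilde u_j))\le \mathcal{G}_{-\gamma}(\tilde u_j)=\mathcal{E}_\omega(\tilde u_j)+\frac{1}{\gamma}\log\int_X e^{-\gamma\tilde u_j}\mu_0$. Since $\tilde u_j\le 0$ and $\mathcal{E}_\omega$ is non-decreasing with $\mathcal{E}_\omega(0)=0$, the first term is $\le 0$; and since $\tilde u_j\le 0$ we have $e^{-\gamma\tilde u_j}\le e^{-(\gamma+\delta)\tilde u_j}$ pointwise, so $\int_X e^{-\gamma\tilde u_j}\mu_0\le\int_X e^{-(\gamma+\delta)\tilde u_j}\mu_0\le C_{\gamma,\delta}$ by \ref{eq:qual m-t ineq for measure}. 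Hence $F_{-\gamma}(MA(\tilde u_j))\le C''$ uniformly, which is the first claim.

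Now assume $u_j$ is a maximizing sequence, so $F_{-\gamma}(MA(\tilde u_j))\to S:=\sup_{E_1(X,\omega)}F_{-\gamma}=\sup_{\mathcal{E}^1(X,\omega)}\mathcal{G}_{-\gamma}<\infty$, the last equality being the first point of Theorem \ref{thm:f and g when beta neg}. By compactness of $PSH(X,\omega)/\R$ I would pass to a subsequence with $\tilde u_j\to u_\infty$ in $L^1(X)$ for some $u_\infty\in PSH(X,\omega)$. Upper semicontinuity of $\mathcal{E}_\omega$ together with $\mathcal{E}_\omega(\tilde u_j)\ge -C'$ gives $\mathcal{E}_\omega(u_\infty)\ge\limsup_j\mathcal{E}_\omega(\tilde u_j)\ge -C'$, so $u_\infty\in\mathcal{E}^1(X,\omega)$. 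The uniform bound $\int_X e^{-(\gamma+\delta)\tilde u_j}\mu_0\le C_{\gamma,\delta}$ then lets me apply Lemma \ref{lem:kont of exp integral with bd} (legitimate since $\mu_0$, being of finite energy, does not charge pluripolar sets) to get $\int_X e^{-\gamma\tilde u_j}\mu_0\to\int_X e^{-\gamma u_\infty}\mu_0\in(0,\infty)$, hence convergence of the corresponding logarithms. Combining this with the upper semicontinuity of $\mathcal{E}_\omega$ yields $\limsup_j\mathcal{G}_{-\gamma}(\tilde u_j)\le\mathcal{G}_{-\gamma}(u_\infty)$. Plugging this and \ref{eq:f smal then g} into the chain $S=\lim_j F_{-\gamma}(MA(\tilde u_j))\le\limsup_j\mathcal{G}_{-\gamma}(\tilde u_j)\le\mathcal{G}_{-\gamma}(u_\infty)\le\sup_{\mathcal{E}^1(X,\omega)}\mathcal{G}_{-\gamma}=S$ forces equality throughout, so $u_\infty$ maximizes $\mathcal{G}_{-\gamma}$ on $\mathcal{E}^1(X,\omega)$. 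As in the proof of Theorem \ref{thm:proper gives max of F beta neg} (the projection-$P$ argument applied to a maximizer of $\mathcal{G}_{-\gamma}$), $u_\infty$ then solves the Monge-Amp\`ere mean field equation \ref{eq:normalized me}, and the equality case of \ref{eq:f smal then g} gives $F_{-\gamma}(MA(u_\infty))=\mathcal{G}_{-\gamma}(u_\infty)=S$. Since by Theorem \ref{thm:var sol of ma} every element of $E_1(X,\omega)$ is $MA(u)$ for some $u\in\mathcal{E}^1(X,\omega)$, this says $u_\infty$ is a maximizer of $F_{-\gamma}\circ MA$; and $\tilde u_j\to u_\infty$ in $L^1(X)$ is exactly the asserted convergence.

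The only genuinely non-formal step is the convergence $\int_X e^{-\gamma\tilde u_j}\mu_0\to\int_X e^{-\gamma u_\infty}\mu_0$: mere $L^1$-convergence of the potentials does not control these integrals (the exponential is not weakly continuous on $PSH(X,\omega)$), and without it $\mathcal{G}_{-\gamma}$ could strictly drop in the limit and the chain of inequalities would break. This is exactly where the \emph{strictly} larger exponent $\gamma+\delta$ in \ref{eq:qual m-t ineq for measure} is used, supplying the uniform higher integrability that feeds Lemma \ref{lem:kont of exp integral with bd}; everything else is an assembly of results already established in this section.
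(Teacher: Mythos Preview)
Your proof is correct and follows essentially the same route as the paper: the paper simply says that ``the previous arguments'' (i.e.\ the proof of Theorem~\ref{thm:proper gives max of F beta neg}) go through once the uniform bound $\int e^{-(\gamma+\delta)\tilde u_j}\mu_0\le C_{\gamma,\delta}$ is supplied directly by the qualitative Moser--Trudinger hypothesis rather than deduced from coercivity, and that is exactly what you do. Your boundedness step via $F_{-\gamma}(MA(\tilde u_j))\le\mathcal G_{-\gamma}(\tilde u_j)\le 0+\tfrac{1}{\gamma}\log C_{\gamma,\delta}$ is a slightly more direct packaging than the paper's, but the substance is the same.
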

\begin{rem}
It may be worth pointing out that the convergence $I_{t}(u_{j})\rightarrow I(u)$
used in the proof above can also be deduced from the results of Demailly-Kollar
\cite{d-j}. Indeed, since $J(u_{j})\leq C$ and we may assume that
$u_{j}\rightarrow u$ in $L^{1}(X),$ the fact that $\mathcal{E}$
is usc (and hence $J$ is lsc) gives $J(u)\leq C<\infty.$ But then
$u$ has no Lelong numbers (as follows from Cor 1.4 in \cite{g-z2})
and hence $I_{t}(u)<\infty$ for all $t$ (compare the proof of prop
\ref{pro:reg}). But then it follows from Theorem \ref{thm:(Demailly-Kollar-).-Let}
that $I_{t}(u)\rightarrow I(u)$ (compare the proof of Cor \ref{cor:global cont of exp int}).
\end{rem}

\subsection{Alpha-invariants}

We define the (generalized) alpha-invariant of a pair $([\omega],\mu_{0})$
by 
\[
\alpha([\omega],\mu_{0}):=\sup\left\{ \alpha:\exists C_{\alpha}:\,\int_{X}e^{-\alpha(u-\sup_{X}u)}\mu_{0}\leq C_{\alpha},\,\forall u\in PSH(X,\omega)\right\} 
\]
When $\mu_{0}$ is any given volume form on $X$ and the Kähler class
$[\omega]=c_{1}(L)$ is the first Chern class of an ample line bundle
the corresponding invariant of the class $[\omega]$ coincides with
the algebro-geometrically defined \emph{log canonical threshold of
$L$} \cite{dem2} (which is precisely Tian's original $\alpha-$variant
\cite{ti1} when $c_{1}(L)=-c_{1}(K_{X})).$ The case of a singular
measure $\mu_{0}$ was recently studied by Dinh-Nguyên-Sibony in complex
dynamics \cite{d-s}. In their terminology, $\alpha([\omega],\mu_{0})>0$
precisely when the measure $\mu_{0}$ is of \emph{global moderate
growth} (with respect to the Kähler class $[\omega]).$ As shown in
\cite{d-s} this condition in particular holds when $\mu_{0}=\omega_{u_{0}}^{n}/n!$
for an $\omega-$psh function $u_{0}$ which is Hölder continuous
and in particular for many of the equilibrium measures which arise
as limits in complex dynamics and whose supports typically are fractal
sets.
\begin{example}
\label{exa:fractal m-t}If $(X,\omega)$ is a Riemann surface with
$\int_{X}\omega=1$ then $\alpha([\omega],\omega)=1.$ Indeed, if
we denote by $G_{x_{0}}$ the corresponding \emph{Green function with
a pole at $x_{0}$} defined by $dd^{c}G_{x_{0}}=\delta_{x_{0}}-\omega$
and mean zero, where $\delta_{x_{0}}$ is the Dirac mass at the point
$x_{0}$ then the integral $\int_{X}e^{-\alpha(u-\sup_{X}u)}\mu_{0}$
for $u=G_{x_{0}}$ is finite for $\alpha<1$ and infinite for $\alpha=1$
(as follows from the standard fact that $G_{x_{0}}-\log d^{2}(x,x_{0})\in C^{0}(X)$
in terms of the distance function wrt the metric $\omega).$ Decomposing
a general element $u\in PSH(X,\omega)$ as $u(x)=\int u(y)G_{y}(x)\omega(y)$
and using Jensen's inequality then proves the claim. Similarly, if
there are positive constants $C$ and $d$ such that the measure $\mu_{0}$
satisfies 
\[
\mu(B_{r})\leq Cr^{d},
\]
 for $r$ sufficiently small, for every geodesic ball of radius $r,$
then 
\[
\alpha\geq2d.
\]
\end{example}
\begin{thm}
\label{thm:alpha invariant intro}Let $(X,\omega)$ be a compact Kähler
manifold and let $\mu_{0}$ be a probability measure on $X$ of finite
energy. If the parameter $\beta:=-\gamma$ (with $\gamma>0)$ satisfies
the bound 
\begin{equation}
\gamma<\alpha\frac{n+1}{n}\label{eq:cond on beta in thm alpha intro}
\end{equation}
 where $\alpha$ is the alpha-invariant of the pair $([\omega],\mu_{0}),$
then the following holds: 
\begin{itemize}
\item Both the functionals $F_{\beta}$ and $\mathcal{G}_{\beta}$ are bounded
from above, i.e. the corresponding\emph{ logarithmic Hardy-Sobolev}
and \emph{ Moser-Trudinger }type inequalities hold 
\item There is a maximizer $\mu$ of $F_{\beta}.$ Moreover, its potential
$u_{\mu}$ maximizes $\mathcal{G}_{\beta}$ and solves the  equation
\ref{eq:me intro}. 
\end{itemize}
\end{thm}
\begin{proof}
By Theorems \ref{thm:proper gives max of F beta neg} and \ref{thm:f and g when beta neg}
it will be enough to prove that $F_{-\gamma}$ is coercive under the
assumptions of the theorem. To this end first note that by assumption
we have that 
\[
\mathcal{L}_{t}^{-}(u)>-C
\]
 for any fixed $t$ with $t<\alpha.$ Writing $\mu=MA(u_{\mu})$ for
the potential $u_{\mu}$ such that $\sup u_{\mu}=0$ gives 
\[
\frac{1}{t}D(\mu)=\sup_{u}\mathcal{L}_{t}^{-}(u)-\left\langle u,\mu\right\rangle \geq\mathcal{L}_{t}^{-}(u_{\mu})-\left\langle u_{\mu},\mu\right\rangle \geq-\left\langle u_{\mu},\mu\right\rangle -C,
\]
 i.e. 
\[
D(\mu)\geq-t\left\langle u_{\mu},\mu\right\rangle -C
\]
 This means that 
\[
F_{-\gamma}(\mu)\leq E(\mu)+\frac{t}{\gamma}\left\langle u_{\mu},\mu\right\rangle +C
\]
 Combining the previous inequality with the inequality \ref{eq:energy bounded by pairing}
hence gives 
\[
F_{-\gamma}(\mu)\leq E(\mu)(1-\frac{t}{\gamma}(\frac{n+1}{n}))+C,
\]
 showing that $F_{-\gamma}$ is proper and even coercive (wrt energy)
as long as 
\[
\gamma<\alpha(\frac{n+1}{n}))
\]
 and $t$ is chosen sufficiently close to $\alpha.$Hence the theorem
follows from Theorem \ref{thm:main intro}.
\end{proof}
In particular, specializing to a Riemann surfaces with $\mu_{0}$
a Frostman measure gives the following 
\begin{cor}
\label{cor:m-t on rieman sur intro}Let $X$ be a compact Riemann
surface and $\mu_{0}$ a probability measure such that 
\[
\mu_{0}(B_{r})\leq Cr^{d}
\]
 for some positive constants $C$ and $d,$ for any local coordinate
ball $B_{r}$ of sufficiently small radius $r.$ Then, for any $\delta>0$
there is a constant $C_{\delta}$ such that 
\[
\log\int_{X}e^{u}\mu_{0}\leq\frac{(d+\delta)}{2}\frac{1}{4}\int_{X}du\wedge d^{c}u+C_{\delta}
\]
 for any smooth function $u$ on $X$ normalized so that $\int_{X}u\omega=0$
for a fixed measure $\omega$ on $X.$ \end{cor}
\begin{proof}
Let us first prove that when $n=1$ the bound on $\mathcal{G}_{-\gamma}(v)$
in fact holds for all smooth functions $v$ on $X.$ This can be seen
in two ways. First, it follows precisely as in the proof Cor 3 in
\cite{b0} from using the following inequality for $v\in\mathcal{C}^{\infty}(X)$
proved there: 
\[
\mathcal{E}_{\omega}(P_{\omega}v)\geq\mathcal{E}_{\omega}(v)
\]
 (which is a rather direct consequence of the orthogonality relation
\ref{eq:orthog relation} when $n=1).$ Combining the previous inequality
with the fact that $\mathcal{L}_{\gamma}^{-}(u)$ is increasing in
$u$ immediately gives 
\[
\sup_{v\in\mathcal{C}^{\infty}(X)}\mathcal{G}_{-\gamma}(v)\leq\sup_{v\in\mathcal{C}^{\infty}(X)}\mathcal{G}_{-\gamma}(P_{\omega}v)\leq\sup_{\mathcal{H}(X,\omega)}\mathcal{G}_{-\gamma}(v)
\]
 which is bounded by Theorem \ref{thm:alpha invariant intro}. Alternatively,
for $v$ continuous we let $\mu:=e^{-\gamma v}/\int e^{-\gamma.v}\mu_{0}.$
Then, by the last point in Theorem \ref{thm:f and g when beta neg}
\[
\mathcal{G}_{-\gamma}(v)\leq F_{\gamma}(e^{-\gamma v}/\int e^{-\gamma v}\mu_{0})\leq C
\]
 using Theorem \ref{thm:alpha invariant intro} in the last step (in
the Kähler-Einstein setting on $S^{2}$ a similar argument was used
by Rubinstein \cite{rub0}). Finally, since if $\int u\omega=0$ and
$\int\omega=1$ we have 
\[
\mathcal{E}_{\omega}(u)=-\frac{1}{2}\int_{X}du\wedge d^{c}u
\]
 and hence $\mathcal{E}_{\omega}(cu)=c^{2}\mathcal{E}_{\omega}(u).$
All in all this means that we obtain the inequality we wanted to prove
from $\mathcal{G}_{-\gamma}(\frac{1}{\gamma}u)\leq C$ 
\end{proof}
It seems likely that one can take $\delta=0$ in the previous corollary
by further studying the blow-up behavior of the functional \emph{$\mathcal{G}_{\alpha-\delta}$}
when $\delta\rightarrow0.$ Indeed, when $\mu$ is a volume (are rather
area) form setting $\delta=0$ does give an optimal inequality according
to Fontana's generalization \cite{cher} of Moser's inequality on
the two-sphere $S^{2}.$ Even though formulated for Riemann surfaces
without boundary the corollary above also contains the analogous statement
on any compact Riemann surface $Y$ with smooth boundary $\partial Y$
if one demands, as usual, that $u=0$ on $\partial Y.$ Indeed, if
$Y$ is a domain in the compact closed Riemann surface $X$ and $u\in C^{0}(Y)$
with $y=0,$ or more generally $u$ is in the Sobolev space $H_{0}^{1}(Y)$
(i.e. the closure in the Dirichlet norm of the space $C_{0}^{\infty}(Y)$
of all smooth and compactly supported functions on the interior of
$Y)$ it is, by standard continuity arguments, enough to prove the
inequality for $u\in C_{0}^{\infty}(Y).$ Extending by zero gives
$u\in\mathcal{C}^{\infty}(X)$ and then the inequality then follows
immediately from Corollary \ref{cor:m-t on rieman sur intro} when
$\omega$ is taken as a measure supported on $\partial Y$ in $X.$ 

In particularly, taking $Y$ as a domain in $\R^{2}$ one gets a weak
version of a recent result och Cianchi \cite{ci} who proved the corresponding
inequality with $\delta=0,$ using completely different methods. This
latter result has very recently been further developed, still in the
setting of $\R^{2},$ by Morpurgo-Fontana \cite{f-m}, building on
Adam's seminal work.%
\footnote{It was pointed out in \cite{f-m} that the methods in \cite{f-m}
can be generalized to the setting of compact manifolds using pseudo-differential
calculus - presumably such a generalization would lead to the sharp
version of Cor \ref{cor:m-t on rieman sur intro} discussed above.
Moreover, the results in \cite{f-m} also give higher dimensional
Moser-Trudinger type inequalities, but for other operators than the
Monge-Ampère operator.%
}

\subsection{\label{sub:The-case-when beta is inf}The limit $\beta\rightarrow\infty:$
envelopes and free boundaries }

In this section we will take the fixed form $\omega$ on $X$ to be
any smooth and closed $(1,1)-$form defining a Kähler\emph{ class}
in $H^{1,1}(X,\R)$ (but not necessarily a Kähler \emph{form}). Consider
the following free boundary value problem for a function $u$ on $X:$
\begin{equation}
\begin{array}{rcll}
(\omega+dd^{c}u)^{n} & = & 0 & \mbox{on\,\ensuremath{\left\{ u<0\right\} }}\\
u & \leq & 0 & \mbox{on}\, X\\
\omega_{u} & \geq & 0 & \mbox{on}\, X
\end{array}\,\label{eq:free boundary}
\end{equation}
 It follows immediately from the domination principle for the Monge-Ampère
operator (see Cor 2.5 in \cite{begz}) that the solution is unique
and can be represented as an upper envelope: 
\begin{equation}
P_{\omega}0=\sup_{v\in PSH(X,\omega)}\{v(x):\, v\leq0\,\mbox{on\,}X\}\label{eq:envelope in beta inf}
\end{equation}

\begin{thm}
\label{thm:free bound}Given a volume form $\mu_{0}$ on $X$ and
$\beta>0$ let $v_{\beta}$ the unique solution of the non-normalized
 equation \ref{eq:normalized me} and $u_{\beta}$ the unique solution
of equation \ref{eq:non-normalized} normalized so that $\sup_{X}u_{\beta}=0.$
Then both $u_{\beta}$ and $v_{\beta}$ converge in $L^{1}(X)$ to
a the solution of the free boundary value problem \ref{eq:free boundary},
which in turn coincides with the envelope $P_{\omega}0$ above. \end{thm}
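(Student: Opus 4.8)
The plan is to show that both the non-normalized solution $v_{\beta}$ and the normalized solution $u_{\beta}$ ($\sup_{X}u_{\beta}=0$) converge in $L^{1}(X)$ to $P_{\omega}0$, which by the domination principle (Cor.\ 2.5 in \cite{begz}), recalled just before the statement, is precisely the solution of \ref{eq:free boundary}. First I would set up reductions. By Prop.\ \ref{pro:reg} both $v_{\beta}$ and $u_{\beta}$ are smooth with $\omega_{v_{\beta}},\omega_{u_{\beta}}>0$ (the right-hand side of the equation is a strictly positive volume form), so in particular $v_{\beta}\in\mathcal{E}^{1}(X,\omega)$ is a potential of $MA(v_{\beta})\in E_{1}(X,\omega)$. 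Write $v_{\beta}=u_{\beta}+c_{\beta}$ with $c_{\beta}:=-\tfrac1\beta\log\int_{X}e^{\beta u_{\beta}}\mu_{0}$. Since $\sup_{X}u_{\beta}=0$ forces $\int_{X}e^{\beta u_{\beta}}\mu_{0}\le1$ we have $c_{\beta}\ge0$, while Jensen's inequality together with the uniform $L^{1}$-bound on $\{u\in PSH(X,\omega):\sup_{X}u=0\}$ (using that $\mu_{0}$ is a volume form) gives $c_{\beta}=\sup_{X}v_{\beta}\le\int_{X}(-u_{\beta})\mu_{0}\le C$. Also $\sup_{X}P_{\omega}0=0$: indeed $P_{\omega}0\le0$, and the orthogonality relation \ref{eq:orthog relation} gives $\int_{X}P_{\omega}0\cdot MA(P_{\omega}0)=0$, so $P_{\omega}0=0$ on a set of positive $MA(P_{\omega}0)$-measure. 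Since $u\mapsto\sup_{X}u$ is continuous on $PSH(X,\omega)$ for the $L^{1}$-topology, once we know $v_{\beta}\to P_{\omega}0$ in $L^{1}$ it follows that $c_{\beta}=\sup_{X}v_{\beta}\to\sup_{X}P_{\omega}0=0$ and hence $u_{\beta}\to P_{\omega}0$ as well. So the whole point is to identify the $L^{1}$-limit of $v_{\beta}$.

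By the bound $\sup_{X}v_{\beta}\le C$ the family $\{v_{\beta}\}$ is relatively compact in $L^{1}(X)$ (compactness of $PSH(X,\omega)/\R$). I would fix any $\beta_{j}\to\infty$ with $v_{\beta_{j}}\to v_{\infty}$ in $L^{1}$ and, after a further subsequence, $\mu_{0}$-a.e., with $v_{\infty}\in PSH(X,\omega)$, and prove $v_{\infty}=P_{\omega}0$. First, $v_{\infty}\le0$: otherwise $\mu_{0}(\{v_{\infty}>\varepsilon\})>0$ for some $\varepsilon>0$ (as $\mu_{0}$ is a volume form), and Fatou applied to $\int_{X}e^{\beta_{j}v_{\beta_{j}}}\mu_{0}\ge e^{\beta_{j}\varepsilon/2}\mu_{0}(\{v_{\beta_{j}}>\varepsilon/2\})$ would force $\int_{X}e^{\beta_{j}v_{\beta_{j}}}\mu_{0}\to\infty$, contradicting $\int_{X}e^{\beta_{j}v_{\beta_{j}}}\mu_{0}=1$. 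Hence $v_{\infty}\le0$ a.e., so $v_{\infty}\le0$ everywhere by the sub-mean value property, and therefore $v_{\infty}$ is a competitor in the envelope \ref{eq:envelope in beta inf}, i.e. $v_{\infty}\le P_{\omega}0$.

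The crux is the reverse inequality, which I would obtain by showing that the Monge--Amp\`ere energy does not collapse, $\mathcal{E}_{\omega}(v_{\infty})\ge\mathcal{E}_{\omega}(P_{\omega}0)$, exploiting the equation. Since $-te^{t}\le e^{-1}$ for all $t\in\R$, testing $MA(v_{\beta})=e^{\beta v_{\beta}}\mu_{0}$ against $-\beta v_{\beta}$ gives
\[
-\beta\int_{X}v_{\beta}\,MA(v_{\beta})=\int_{X}\bigl(-\beta v_{\beta}\bigr)e^{\beta v_{\beta}}\mu_{0}\le e^{-1},
\]
i.e. $-\langle v_{\beta},MA(v_{\beta})\rangle\le e^{-1}/\beta$. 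On the other hand, by \ref{eq:energy in terms of pot} (with $v_{\beta}$ the potential of $MA(v_{\beta})$),
\[
-\langle v_{\beta},MA(v_{\beta})\rangle=E\bigl(MA(v_{\beta})\bigr)-\mathcal{E}_{\omega}(v_{\beta})\ge\mathcal{E}_{\omega}(P_{\omega}0)-\mathcal{E}_{\omega}(v_{\beta}),
\]
the last step because $E(\mu)\ge\mathcal{E}_{\omega}(P_{\omega}0)-\langle P_{\omega}0,\mu\rangle\ge\mathcal{E}_{\omega}(P_{\omega}0)$ for every $\mu\in\mathcal{M}_{1}(X)$ (definition \ref{eq:def of e as sup} and $P_{\omega}0\le0$). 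Hence $\mathcal{E}_{\omega}(v_{\beta})\ge\mathcal{E}_{\omega}(P_{\omega}0)-e^{-1}/\beta$, and upper semicontinuity of $\mathcal{E}_{\omega}$ (Prop.\ \ref{pro:energy is ups}) gives $\mathcal{E}_{\omega}(v_{\infty})\ge\limsup_{j}\mathcal{E}_{\omega}(v_{\beta_{j}})\ge\mathcal{E}_{\omega}(P_{\omega}0)$.

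Now $v_{\infty}\le P_{\omega}0$ with $\mathcal{E}_{\omega}(v_{\infty})\ge\mathcal{E}_{\omega}(P_{\omega}0)$ (both lie in $\mathcal{E}^{1}(X,\omega)$, $P_{\omega}0$ being bounded and $v_{\infty}$ having finite energy), so $\mathcal{E}_{\omega}(v_{\infty})=\mathcal{E}_{\omega}(P_{\omega}0)$; writing this out through the cocycle identity $\mathcal{E}_{\omega}(P_{\omega}0)-\mathcal{E}_{\omega}(v_{\infty})=\tfrac{1}{(n+1)!V}\sum_{j=0}^{n}\int_{X}(P_{\omega}0-v_{\infty})\,\omega_{P_{\omega}0}^{\,j}\wedge\omega_{v_{\infty}}^{\,n-j}$, a sum of nonnegative terms, forces $P_{\omega}0=v_{\infty}$ a.e. with respect to $MA(P_{\omega}0)$, whence $v_{\infty}=P_{\omega}0$ by the domination principle. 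Since every subsequential $L^{1}$-limit of $v_{\beta}$ equals $P_{\omega}0$, we get $v_{\beta}\to P_{\omega}0$ in $L^{1}$, and then $c_{\beta}\to0$ and $u_{\beta}\to P_{\omega}0$ as explained in the first paragraph. I expect the main obstacle to be exactly this non-collapse of the energy: there is no a priori rate at which $v_{\beta}$ approaches $P_{\omega}0$, and it is the elementary convexity bound $-te^{t}\le e^{-1}$ inserted into the equation — rather than any compactness — that controls $\langle v_{\beta},MA(v_{\beta})\rangle$, hence $\mathcal{E}_{\omega}(v_{\beta})$, from below; the continuity of $\sup_{X}$ on $PSH(X,\omega)$ is the minor technical ingredient needed to pass from $v_{\beta}$ to $u_{\beta}$.
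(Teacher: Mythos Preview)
Your argument is correct and takes a genuinely different route from the paper's. The paper first proves $u_{\beta}\to P_{\omega}0$ by exploiting the variational characterization of $u_{\beta}$ as the maximizer of $\mathcal{G}_{\beta}$: comparing $\mathcal{G}_{\beta}(u_{\beta})\ge\mathcal{G}_{\beta}(P_{\omega}0)$ with the Legendre-transform inequality $\tfrac1\beta D_{\mu_{0}}(MA(P_{\omega}0))\ge -\langle u_{\beta},MA(P_{\omega}0)\rangle-\mathcal{L}_{\beta}^{+}(u_{\beta})$ yields $\liminf_{\beta}\bigl(\mathcal{E}_{\omega}(u_{\beta})-\langle u_{\beta},MA(P_{\omega}0)\rangle\bigr)\ge E(MA(P_{\omega}0))$, and then the stability clause of Theorem~\ref{thm:var sol of ma} identifies the limit. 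Convergence of $v_{\beta}$ is deduced afterwards from the resulting convergence in energy. You reverse the order: you go directly for $v_{\beta}$ via the pointwise bound $-te^{t}\le e^{-1}$ plugged into the non-normalized equation, which gives the clean quantitative estimate $\mathcal{E}_{\omega}(v_{\beta})\ge\mathcal{E}_{\omega}(P_{\omega}0)-e^{-1}/\beta$ with no appeal to the maximizing property of $u_{\beta}$ or to Theorem~\ref{thm:var sol of ma}. Your approach is more elementary and self-contained (it uses only monotonicity, usc, the cocycle formula for $\mathcal{E}_{\omega}$, and the domination principle), and it yields an explicit rate for the energy defect; the paper's approach fits more organically into its Legendre-duality framework and gives the convergence $\mathcal{E}_{\omega}(u_{\beta})\to\mathcal{E}_{\omega}(P_{\omega}0)$ along the way.

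One small correction: in your final step you invoke the domination principle with equality a.e.\ with respect to $MA(P_{\omega}0)$, but to upgrade $v_{\infty}\le P_{\omega}0$ to equality you need the $j=0$ term of the cocycle sum, which gives $v_{\infty}=P_{\omega}0$ a.e.\ with respect to $MA(v_{\infty})$; the domination principle (in the form: $MA(v_{\infty})(\{v_{\infty}<P_{\omega}0\})=0\Rightarrow v_{\infty}\ge P_{\omega}0$) then applies. Since all terms of the sum vanish this is available, so the slip is harmless.
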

\begin{proof}
Let $\mathcal{L}_{\beta}^{+}(u):=\frac{1}{\beta}\log\int_{X}e^{\beta u_{\beta}}\mu_{0}.$

\emph{Step 1: Convergence of $u_{\beta}$}

Since $\mu:=MA(P_{\omega}0)$ is a candidate for the sup defining
the Legendre transform of $D_{\mu_{0}}$ we get (see section \ref{sub:The-relative-entropy}
or use directly Jensen's inequality) 
\[
-\int_{X}u_{\beta}MA(P_{\omega}0)+\frac{1}{\beta}D_{\mu_{0}}(MA(P_{\omega}0))\geq-\frac{1}{\beta}\mathcal{L}_{\beta}^{+}(u_{\beta})
\]
 Hence defining the constant $D:=D_{\mu_{0}}(MA(P_{\omega}0))$ gives
\begin{equation}
\mathcal{E_{\omega}}(u_{\beta})-\int_{X}u_{\beta}MA(P_{\omega}0)+\frac{D}{\beta}\geq\mathcal{E_{\omega}}(u_{\beta})-\frac{1}{\beta}\mathcal{L}_{\beta}^{+}(u_{\beta})\geq\label{eq:pf of thm beta inf ineq}
\end{equation}
 
\[
\geq\mathcal{E_{\omega}}(P_{\omega}0)-\mathcal{L}_{\beta}^{+}(P_{\omega}0)
\]
 using, in the last inequality that, by Theorem \ref{thm:existence etc when beta pos}b
$u_{\beta}$ maximizes the functional $\mathcal{G}_{\beta}.$ Since
\[
\mathcal{L}_{\beta}^{+}(P_{\omega}0)\rightarrow\sup_{X}P_{\omega}0=0
\]
 (the last equality above follows for example from the orthogonality
relation \ref{eq:orthog relation}) this means that 
\begin{equation}
\liminf_{\beta\rightarrow\infty}\mathcal{E_{\omega}}(u_{\beta})-\int_{X}(u_{\beta}MA(P_{\omega}0)\geq\mathcal{E_{\omega}}(P_{\omega}0)-\int_{X}(P_{\omega}0))MA(P_{\omega}0)\label{eq:pf of thm beta inf}
\end{equation}
 also using the orthogonality relation \ref{eq:orthog relation} saying
that the second term in the rhs vanishes. But by the last statement
in Theorem \ref{thm:var sol of ma} it then follows that $u_{\beta}\rightarrow P_{\omega}0$
in $L^{1}(X)$ and that \ref{eq:pf of thm beta inf} is actually an
equality when $\liminf$ is replaced by $\lim.$

\emph{Step two: Convergence of $v_{\beta}$}

By the asymptotic equality referred to above combined with the fact
that $u_{\beta}\rightarrow P_{\omega}0$ and the orthogonality relation
we get the following ``convergence in energy'': 
\[
\mathcal{E_{\omega}}(u_{\beta})\rightarrow\mathcal{E_{\omega}}(P_{\omega}0)
\]
 Hence, using the orthogonality relation \ref{eq:orthog relation}
again the inequalities \ref{eq:pf of thm beta inf ineq} force 
\[
-\frac{1}{\beta}\mathcal{L}_{\beta}^{+}(u_{\beta})\rightarrow0
\]
 i.e. $v_{\beta}:=u_{\beta}-\frac{1}{\beta}\mathcal{L}_{\beta}^{+}(u_{\beta})$
has the same limit as $u_{\beta}$ and satisfies the equation \ref{eq:non-normalized}. 
\end{proof}
As shown in \cite{b-d} the envelope $P_{\omega}0$ has a Laplacian
which locally bounded it hence seems natural to ask if the convergence
above holds in the Hölder space $C^{1,\alpha}(X)$ for any $\alpha<1$?

\section{\label{sub:The-(twisted)-K=00003D0000E4hler-Einstein}The (twisted)
Kähler-Einstein setting}

In this section the measure $\mu_{0}$ will be taken to be a volume
form and we will then reformulate equation \ref{eq:me intro} as a
twisted Kähler-Einstein equation. First recall that the\emph{ Ricci
curvature} of a Kähler metric is defined, in local holomorphic coordinates,
by 
\[
\mbox{Ric}\omega:=dd^{c}(-\log(\frac{\omega^{n}}{(i\sum_{j}dz_{j}\wedge d\bar{z}_{j})^{n}}))(=-dd^{c}\log(\det\omega_{ij}))
\]
 representing the anti-canonical class $-c_{1}(K_{X}).$ If $\theta$
is a given closed $(1,1)-$form on $X$ the \emph{twisted Kähler-Einstein
equation} for a Kähler metric $\omega$ is defined by 
\begin{equation}
\mbox{Ric}\omega-\theta=-\beta\omega\,\,\:(\gamma:=-\beta\in\R)\label{eq:twisted k-e eq}
\end{equation}
 where, compared with the previous notation and the lhs is called
the\emph{ twisted Ricci curvature} of $\omega.$ It hence implies
the following cohomological relation in $H_{dd^{c}}^{2}(X,\R)$: 
\begin{equation}
[\omega]=\beta(c_{1}(K_{X})+[\theta])\label{eq:cohom relation}
\end{equation}
 forcing $\beta(c_{1}(K_{X})+[\theta])$ to be a Kähler class, which
we will henceforth assume. Fixing a Kähler form $\omega=\omega_{0}$
in $\beta(c_{1}(K_{X})+[\theta]),$ one defines its \emph{twisted
Ricci potential }$h=h_{\omega,\theta}$ by the following equation
\begin{equation}
\mbox{Ric}\omega-\theta=-\beta(\omega+dd^{c}h_{\omega,\theta}),\label{eq:twisted ricci pot}
\end{equation}
 where the normalization constant is fixed by imposing $\int_{X}e^{-h_{\omega,\theta}}\omega^{n}=1.$
Then \ref{eq:twisted k-e eq} (with $\omega=\omega_{u})$ is equivalent
to the equation 
\begin{equation}
(\omega+dd^{c}u)^{n}=e^{-\beta h_{\omega,\theta}}e^{\beta u}\omega^{n},\label{eq:m-e equation in twisted set}
\end{equation}
 i.e. the  equation \ref{eq:non-normalized} with $[\omega]$ satisfying
\ref{eq:cohom relation} and 
\begin{equation}
\mu_{0}=e^{-\beta h_{\omega,\theta}}\frac{\omega^{n}}{Vn!}\label{eq:mu zero in twisted setting}
\end{equation}
 We will call this particular choice of a triple $(\beta,\omega,\mu_{0})$
for the\emph{ twisted Kähler-Einstein setting. }In fact, the previous
argument shows that the equation \ref{eq:non-normalized} is equivalent
to the twisted Kähler-Einstein equation when $\mu_{0}$ is a volume
form, as follows by first defining $h_{\omega,\theta}$ by the relation
\ref{eq:mu zero in twisted setting} and then $\theta$ by the relation
\ref{eq:twisted ricci pot}.

\subsection{\label{sub:The-free-energy}The twisted Mabuchi K-energy functional
as the free energy}

Next, we define, for a fixed $\beta,$ $\mathcal{K}_{\theta}(u):=\beta F_{\beta}(MA(u)).$ 
\begin{prop}
\label{pro:var def of k}The functional $\mathcal{K}_{\theta}(u_{t})$
satisfies 
\begin{equation}
d\mathcal{K_{\theta}}_{|u}=(\beta\omega_{u}-\mbox{(Ric \ensuremath{\omega_{u}-\theta}}))\wedge\frac{\omega_{u}^{n-1}}{(n-1)!}\label{eq:diff of tw mab}
\end{equation}
 and $\mathcal{K}_{\theta}$ can hence be decomposed as $\mathcal{K}_{\theta}=\mathcal{K}^{(\beta)}+\mathcal{J}_{\theta}$
where 
\[
d\mathcal{K}_{|u}^{(\beta)}=(\beta\omega_{u}-\mbox{Ric \ensuremath{\omega_{u})\wedge\frac{\omega_{u}^{n-1}}{(n-1)!},\,\,\, d\mathcal{J}_{\theta|u}=\theta}}\wedge\frac{\omega_{u}^{n-1}}{(n-1)!}
\]
 \end{prop}
\begin{proof}
Combining Proposition \ref{pro:deriv of E} and \ref{pro:deriv of D}
gives
\[
\frac{d\mathcal{K}_{\theta}(u_{t})}{dt}=\int(-\beta u_{MA(u_{t})}+\log(\frac{MA(u_{t})}{\mu_{0}})\frac{dMA(u_{t})}{dt}
\]
 Now $\frac{dMA(u_{t})}{dt}=dd^{c}(\frac{du_{t}}{dt})\wedge\omega_{u_{t}}^{n-1}/(n-1)!$
and hence integration by parts give 
\[
\frac{d\mathcal{K}_{\theta}(u_{t})}{dt}=\int\frac{du_{t}}{dt}dd^{c}(-\beta u_{MA(u_{t})}+\log(\frac{MA(u_{t})}{\mu_{0}}))\wedge\omega_{u_{t}}^{n-1}/(n-1)!=
\]
 
\[
=\int\frac{du_{t}}{dt}(-\beta\omega_{u}+(\beta\omega+dd^{c}\log(\frac{MA(u_{t})}{\mu_{0}}))\wedge\omega_{u_{t}}^{n-1}/(n-1)!
\]
 using that, by definition, $\omega_{u_{MA(u_{t})}}=\omega_{u_{t}}.$
Finally, since the second term in the sum above may be written as
$(\beta\omega)_{\log(\frac{MA(u_{t})}{\mu_{0}})}=\mbox{-Ric}\ensuremath{\omega_{t}}+\theta$
when $\mu_{0}=e^{\beta h_{\omega,\theta}}\omega^{n}/Vn!$ this proves
the formula above for $d\mathcal{K_{\theta}}.$ 
\end{proof}
The previous proposition confirms that $\mathcal{K}_{\theta}(u)$
indeed coincides with Mabuchi's K-energy functional for $\theta=0$
and $\beta=1$ \cite{m2} and in general with its twisted versions
\cite{s-t,st} which are usually defined by the property \ref{eq:diff of tw mab}.
In the smooth setting the decomposition \ref{eq:def of f intro} is
then equivalent to a formula for \emph{$\mathcal{K}$} due to Tian
(see (5.12) in \cite{ti3}). Tian's formula was generalized by Chen
\cite{ch2} who used it to define and study $\mathcal{K}$ on potentials
$u$ such that $\omega_{u}$ is locally bounded. As emphasized in
the present paper formula \ref{eq:mab f intro} allows one to extend
the definition of $\mathcal{K}$ to the space $\mathcal{E}^{1}(X,\omega)$
of finite energy potentials. 
\begin{rem}
To compare with other formulations of the (twisted) Mabuchi functional
in the setting of log pairs we set $\beta=1$ and take $\theta$ to
be the current of integration along a smooth divisor $D.$ Writing
$\mathcal{K}_{(X,D)}:=\mathcal{K}_{\theta}$ we can then, trivially,
rewrite the relation \ref{eq:diff of tw mab} as 
\[
d\mathcal{K}_{(X,D)|_{u}}=-\left(\mbox{Ric \ensuremath{\omega_{u}\wedge\frac{\omega_{u}^{n-1}}{(n-1)!}-n\frac{(-K_{X})\cdot L^{n-1}}{L^{n}}\frac{\omega_{u}^{n}}{n!}}}\right)+\left(\delta_{D}\wedge\frac{\omega_{u}^{n-1}}{(n-1)!}-n\frac{D\cdot L^{n-1}}{L^{n}}\frac{\omega_{u}^{n}}{n!}\right),
\]
 using that, by definition, $L:=-(K_{X}+D).$ The first term is equal
to $-\omega_{u}^{n}/n!$ times $R-\bar{R},$ where $R$ is the scalar
curvature of the Kähler metric $\omega_{u}$ and $\bar{R}$ is its
average. Hence, up to an additive constant, $\mathcal{K}_{(X,D)}=\mathcal{K}+(\mathcal{E}_{(D,\omega)}-n\frac{D\cdot L^{n-1}}{L^{n}}\mathcal{E}_{(X,\omega)}),$
where $\mathcal{K}$ is the usual Mabuchi functional attached to the
Kähler class $[\omega]$ and $\mathcal{E}_{(X,\omega)}$ amd $\mathcal{E}_{(D,\omega)}$
are the usual energy functionals on $X$ and the submanifold $D$
defined as in section \ref{sub:The-Monge-Amp=00003D0000E8re-operator}.
\end{rem}
As shown by Mabuchi \cite{m1} and Donaldson $\mathcal{K}$ is convex
along \emph{geodesics} in $\mathcal{H}_{\omega}(X)$ (defined in terms
of \emph{Mabuchi's Riemannian metric} $g$ on $\mathcal{H}(X,\omega);$
see section \ref{sec:Convergence-of-the-cal} below). Using this latter
convexity we also deduce the following proposition. Before stating
it we recall that any complex curve $u_{t}$ in $\mathcal{H}_{\omega}(X)$
determines a curve $V_{t}$ of $(1,0)-$vector fields which are dual
to the $(0,1)-$form $\bar{\partial}(\partial_{\bar{t}}u)$ under
$\omega_{u_{t}}.$ 
\begin{prop}
\label{pro:convexity of twisted mab}If $\theta\geq0$ is a positive
current then the functional $\mathcal{K}_{\theta}(u_{t})$ is convex
along geodesics $u_{t}$ in $\mathcal{H}_{\omega}(X)$ and strictly
convex if $\theta$ is a Kähler current, i.e. $\theta>\epsilon\omega_{0}.$
Moreover, if $\theta$ is a positive multiple of the current of integration
$\delta_{D}$ along an irreducible smooth divisor $D,$ then $d^{2}\mathcal{K}_{\theta}(u_{t})/d^{2}t=0$
at a given $t$ iff $\bar{\partial}V_{t}=0$ and $V_{t}$ is tangential
to $D.$ In particular, $d^{2}\mathcal{K}_{\theta}(u_{t})$ is geodesically
strictly convex if $X$ admits no non-trivial holomorphic vector fields
which are tangent to $D.$\end{prop}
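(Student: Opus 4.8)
The starting point is the decomposition $\mathcal{K}_\theta = \mathcal{K} + \mathcal{J}_\theta$ from Proposition \ref{pro:var def of k}, together with the classical fact (Mabuchi \cite{m1}, Donaldson) that $\mathcal{K}$ is convex along smooth geodesics $u_t$ in $(\mathcal{H}(X,\omega),g)$. Hence it suffices to analyze the second variation of $\mathcal{J}_\theta$ along a geodesic, where $d\mathcal{J}_{\theta|u} = \theta\wedge\omega_u^{n-1}/(n-1)!$. First I would differentiate once: $\frac{d}{dt}\mathcal{J}_\theta(u_t) = \int_X \dot u_t\, \theta\wedge\omega_{u_t}^{n-1}/(n-1)!$. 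Differentiating again and using $\ddot u_t\,\omega_{u_t}^n = |\partial\dot u_t|^2_{\omega_{u_t}}\,\omega_{u_t}^n$ — the defining ODE of the Mabuchi geodesic — together with integration by parts, one obtains the standard expression
\[
\frac{d^2}{dt^2}\mathcal{J}_\theta(u_t) = \int_X \big(|\partial \dot u_t|^2_{\omega_{u_t}} - (\ddot u_t - |\partial\dot u_t|^2)\big)\,\theta\wedge\frac{\omega_{u_t}^{n-1}}{(n-1)!} + (\text{curvature term}),
\]
which, after the geodesic equation is invoked to kill $\ddot u_t - |\partial\dot u_t|^2$ in the appropriate sense, reduces to a manifestly non-negative integrand of the form $\langle i\partial\dot u_t\wedge\bar\partial\dot u_t,\ \text{something}\rangle\wedge\theta\wedge\omega_{u_t}^{n-2}$ plus a term proportional to $\theta$ contracted against a positive form. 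The key point is that when $\theta\geq 0$ every term is a wedge of positive $(1,1)$-forms (and the geodesic $\omega_{u_t}>0$), hence the integral is $\geq 0$: this gives geodesic convexity of $\mathcal{J}_\theta$, and adding Mabuchi's convexity of $\mathcal{K}$ yields convexity of $\mathcal{K}_\theta$.

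For the strict convexity when $\theta>\epsilon\omega_0$, I would observe that the second-variation integrand then dominates $\epsilon\int_X i\partial\dot u_t\wedge\bar\partial\dot u_t\wedge\omega_0\wedge\omega_{u_t}^{n-2}$ (up to positive constants), which vanishes only if $\partial\dot u_t\equiv 0$, i.e. $\dot u_t$ is constant; combined with the geodesic equation this forces $u_t$ to be trivial modulo the $\R$-action, so $\mathcal{K}_\theta$ is strictly convex modulo constants. The case $\theta = c\,\delta_D$ is the delicate one: here $\theta$ is not a smooth form and $\theta\wedge\omega_{u_t}^{n-1}$ must be interpreted as (a constant times) the restriction to $D$ of $\omega_{u_t}^{n-1}|_D/(n-1)!$, using that $D$ is smooth so $\omega_{u_t}|_D$ is a genuine Kähler form on $D$. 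The second variation then becomes an integral over $D$ of $i\partial(\dot u_t|_D)\wedge\bar\partial(\dot u_t|_D)\wedge(\omega_{u_t}|_D)^{n-2}$ (plus lower-order positive contributions), which vanishes precisely when $\dot u_t|_D$ is constant on $D$. One then promotes this to the statement that the complex gradient vector field $V_t$ of $\dot u_t$ (with respect to $\omega_{u_t}$) is holomorphic and tangent to $D$: holomorphy of $V_t$ comes from $\bar\partial V_t = 0$, which is the infinitesimal version of the vanishing of the full second variation (the Bochner/Lichnerowicz-type term in $\mathcal{K}$'s second variation must also vanish), and tangency to $D$ is exactly the condition that $\dot u_t$ be constant along $D$ so that $V_t$ annihilates the defining function of $D$ along $D$. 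Conversely, if such a $V_t$ exists its flow generates a geodesic along which $d^2\mathcal{K}_\theta/dt^2 = 0$.

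The main obstacle I anticipate is making the computation rigorous at the level of the singular current $\theta = c\,\delta_D$: one cannot simply wedge $\delta_D$ with $\omega_{u_t}^{n-1}$ and integrate by parts naively, so I would either (a) approximate $\delta_D$ by smooth semi-positive forms $\theta_\eta\to\delta_D$, carry out the convex second-variation estimate for each $\theta_\eta$, and pass to the limit using that the geodesic $\omega_{u_t}$ is smooth and positive (so the restriction $\omega_{u_t}|_D$ and all integrals over $D$ vary continuously); or (b) work directly with the adjunction-type identity $\delta_D\wedge\omega_{u_t}^{n-1}/(n-1)! = \iota_{D*}\big(\omega_{u_t}|_D^{\,n-1}/(n-1)!\big)$ for smooth metrics and argue on $D$ throughout. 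A secondary technical point is that the geodesics $u_t$ in $\mathcal{H}(X,\omega)$ need not be smooth in $t$ in general (only $C^{1,1}$ by Chen's regularity theorem), so strictly speaking one works with the weak geodesics and the distributional second derivative; but since we only need the \emph{sign} of $d^2\mathcal{K}_\theta/dt^2$ and the characterization of its vanishing, the $C^{1,1}$ regularity of Chen \cite{chen}, which the paper already invokes for the Calabi-flow argument, is more than enough to justify the integration by parts and the limiting argument.
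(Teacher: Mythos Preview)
Your overall strategy matches the paper's exactly: decompose $\mathcal{K}_\theta=\mathcal{K}+\mathcal{J}_\theta$, invoke Mabuchi's formula $d^2\mathcal{K}/dt^2=\int_X|\bar\partial V_t|^2\omega_{u_t}^n/n!$, and compute the second variation of $\mathcal{J}_\theta$ along a geodesic. For smooth $\theta\geq 0$ this is indeed already in Stoppa \cite{st}, as the paper notes. The substantive case is $\theta=c\,\delta_D$, and there your computation goes wrong.

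Differentiating $\frac{d}{dt}\mathcal{J}_{\delta_D}(u_t)=\int_D \dot u_t\,\omega_{u_t}^{n-1}/(n-1)!$ once more and integrating by parts \emph{on $D$} gives
\[
\frac{d^2}{dt^2}\mathcal{J}_{\delta_D}(u_t)=\int_D\Big(\ddot u_t-|\bar\partial_D\dot u_t|^2_{\omega_{u_t}}\Big)\frac{\omega_{u_t}^{n-1}}{(n-1)!},
\]
so the tangential gradient term $|\bar\partial_D\dot u_t|^2$ enters with a \emph{minus} sign, not a plus sign. Now substitute the geodesic equation $\ddot u_t=|\bar\partial\dot u_t|^2_{\omega_{u_t}}=|V_t|^2_{\omega_{u_t}}$ (the full gradient on $X$). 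Since $|\bar\partial_D\dot u_t|^2=|V_T|^2$ where $V_T$ is the component of $V_t$ tangent to $D$, the integrand becomes $|V_t|^2-|V_T|^2=|V_N|^2$, the squared length of the \emph{normal} component. Thus
\[
\frac{d^2}{dt^2}\mathcal{J}_{\delta_D}(u_t)=\int_D |V_N|^2_{\omega_{u_t}}\,\frac{\omega_{u_t}^{n-1}}{(n-1)!}\geq 0,
\]
which is the paper's formula. The vanishing condition is therefore $V_N|_D=0$, i.e.\ $V_t$ is tangent to $D$ --- not that $\dot u_t|_D$ is constant. Your claim that ``tangency to $D$ is exactly the condition that $\dot u_t$ be constant along $D$'' conflates the two orthogonal components: $\dot u_t|_D$ constant means $V_T|_D=0$, while tangency means $V_N|_D=0$. (For instance, locally near $D=\{z_1=0\}$ with Euclidean metric, $\dot u_t=2\,\mathrm{Re}\,z_1$ has $\dot u_t|_D\equiv 0$ but $V_t=\partial_{z_1}$ is transverse to $D$.) Combining with the $\mathcal{K}$ term, $d^2\mathcal{K}_\theta/dt^2=0$ iff $\bar\partial V_t=0$ on $X$ and $V_N=0$ on $D$, which is the statement of the proposition.

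Two further remarks. First, your worries about rigorously wedging $\delta_D$ with $\omega_{u_t}^{n-1}$ are unnecessary here: the proposition is stated for smooth geodesics $u_t\in\mathcal{H}(X,\omega)$, so $\omega_{u_t}$ is a genuine K\"ahler form and its restriction to the smooth divisor $D$ is perfectly well defined; the paper works directly on $D$ with no approximation. Second, the $C^{1,1}$ regularity of weak geodesics is irrelevant for this proposition (the paper's convexity statement is along smooth geodesics), so there is no need to invoke Chen's regularity theorem.
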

\begin{proof}
The first part was already observed by Stoppa \cite{st} and hence
we consider the case when $\theta=c\delta_{D}$ (and it will be clear
that we may assume that $c=1).$ Let us first recall the following
formula for a geodesic $u_{t}:$ 
\begin{equation}
\partial_{t}^{2}u_{t}-|\bar{\partial}(\partial_{\bar{t}}u)|_{\omega_{u_{t}}}^{2}(=\partial_{t}^{2}u_{t}-|V_{t})|_{\omega_{u_{t}}}^{2})=0,\label{eq:geod equ}
\end{equation}
 We also recall the following formula \cite{m1,d00} of the usual
Mabuchi functional along a geodesic (recall also that $\mathcal{E}_{\omega}$
is affine alongs geodesics): 
\[
\frac{\partial^{2}\mathcal{K}(u_{t})}{\partial^{2}t}=\int_{X}|\bar{\partial}V|_{\omega_{u_{t}}}^{2}\frac{\omega_{u_{t}}^{n}}{n!}(\geq0)
\]
 Next, a direct calculation gives 
\[
\frac{\partial^{2}\mathcal{J}_{\theta}(u_{t})}{\partial^{2}t}=\int_{D}(\partial_{t}^{2}u_{t}-|\bar{\partial}_{D}(\partial_{\bar{t}}u)|_{\omega_{u_{t}}}^{2})\frac{\omega_{u_{t}}^{n-1}}{(n-1)!}=\int_{D}|V_{N}|_{\omega_{u_{t}}}^{2}\frac{\omega_{u_{t}}^{n-1}}{(n-1)!}(\geq0)
\]
 where $V_{N}$ denotes the component of $V_{t}$ normal to $D$ wrt
$\omega_{t}$ and where we have used the geodesic equation \ref{eq:geod equ}
in the last step. The proof is now concluded by invoking the decomposition
formula for $\mathcal{K}_{\theta}$ from the previous proposition. 
\end{proof}
As shown by Bando-Mabuchi \cite{b-m} any Kähler-Einstein metric minimizes
$\mathcal{K}_{\theta}.$ Here we note that the corresponding property
holds in the (possibly singular) twisted setting for any \emph{positive}
current $\theta:$ 
\begin{prop}
\label{pro:minim of k-ene for pos theta}Let $\theta\geq0$ be a positive
current and $u\in\mathcal{E}^{1}(X,\omega)$ a solution to equation
\ref{eq:m-e equation in twisted set}, Then $u$ minimizes the functional
$\mathcal{K}_{\theta}$ on $\mathcal{E}(X.\omega).$ \end{prop}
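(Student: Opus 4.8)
The plan is to derive the minimizing property from the geodesic convexity of $\mathcal{K}_{\theta}$ furnished by Proposition~\ref{pro:convexity of twisted mab}, in the spirit of Bando-Mabuchi's argument that a K\"ahler-Einstein metric minimizes the Mabuchi functional. The starting point is that a solution $u$ of \ref{eq:m-e equation in twisted set} is a solution of the non-normalized Monge-Amp\`ere mean field equation \ref{eq:non-normalized} for the measure $\mu_{0}$ of \ref{eq:mu zero in twisted setting}, hence, by Proposition~\ref{pro:var def of k}, the measure $d\mathcal{K}_{\theta|u}$ vanishes; that is, $u$ is a critical point of $\mathcal{K}_{\theta}$. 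When $\beta>0$ there is nothing more to prove: $\mathcal{K}_{\theta}=\beta F_{\beta}\circ MA$ with $\beta>0$, and Theorem~\ref{thm:existence etc when beta pos} already identifies $MA(u)$ with the unique minimizer of $F_{\beta}$ on $\mathcal{M}_{1}(X)$, so $u$ minimizes $\mathcal{K}_{\theta}$ on $\mathcal{E}^{1}(X,\omega)$. From now on I assume $\beta=-\gamma<0$, the case in which the hypothesis $\theta\geq0$ is really used.

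For $\beta=-\gamma<0$ I would first reduce the desired inequality $\mathcal{K}_{\theta}(v)\geq\mathcal{K}_{\theta}(u)$ to the case $v\in\mathcal{H}(X,\omega)$. Since $\mathcal{K}_{\theta}=(-\gamma E+D)\circ MA$, for a sequence $v_{j}\in\mathcal{H}(X,\omega)$ decreasing to a given $v\in\mathcal{E}^{1}(X,\omega)$ (Proposition~\ref{pro:(Demailly-ref).-The}) one has $E(MA(v_{j}))\to E(MA(v))$ by Proposition~\ref{pro:non-pluripol ma}, while $D$ is lower semicontinuous (being a Legendre transform), so $\mathcal{K}_{\theta}(v)\leq\liminf_{j}\mathcal{K}_{\theta}(v_{j})$ and it suffices to handle $v\in\mathcal{H}(X,\omega)$. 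By Proposition~\ref{pro:reg} the solution $u$ is continuous (and smooth when $\mu_{0}$ is a volume form with $\theta$ smooth), so $u$ and $v$ can be joined by a weak geodesic $(u_{t})_{t\in[0,1]}$ of the Mabuchi-Semmes-Donaldson metric with bounded complex Hessian and bounded $\dot u_{t}$ (Chen). Approximating this weak geodesic by smooth curves and passing to the limit, using $\theta\geq0$ together with the semicontinuity and continuity properties of $\mathcal{E}_{\omega}$, $D$ and $MA$ recalled above, Proposition~\ref{pro:convexity of twisted mab} yields that $h(t):=\mathcal{K}_{\theta}(u_{t})$ is convex on $[0,1]$ and right-differentiable at $0$ with $h'(0^{+})=\int_{X}\dot u_{0}\,d\mathcal{K}_{\theta|u}$. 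Since $d\mathcal{K}_{\theta|u}=0$ this gives $h'(0^{+})=0$, and convexity of $h$ forces $\mathcal{K}_{\theta}(v)=h(1)\geq h(0)=\mathcal{K}_{\theta}(u)$, as desired.

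I expect the main obstacle to be precisely this last step: upgrading Proposition~\ref{pro:convexity of twisted mab}, proved for smooth geodesics in $\mathcal{H}(X,\omega)$, to the weak geodesic joining $u$ and $v$, and justifying the endpoint derivative identity $h'(0^{+})=\int_{X}\dot u_{0}\,d\mathcal{K}_{\theta|u}$ in only $C^{1,1}$ regularity; this is where the boundedness of $u$ from Proposition~\ref{pro:reg} and Chen's a priori estimates enter, and where one must check that the (possibly singular) twisting term $\mathcal{J}_{\theta}$ behaves well along the geodesic. If one prefers to sidestep these regularity issues, the statement can be reduced to a maximizing property of $u$: the identities $\mathcal{K}_{\theta}(u)=-\gamma F_{-\gamma}(MA(u))=-\gamma\mathcal{G}_{-\gamma}(u)$ (equality holds in \ref{eq:f smal then g} since $u$ solves the equation) together with the inequality $\mathcal{K}_{\theta}(v)=-\gamma F_{-\gamma}(MA(v))\geq-\gamma\mathcal{G}_{-\gamma}(v)$, valid for every $v\in\mathcal{E}^{1}(X,\omega)$ (again from \ref{eq:f smal then g}, using $\gamma>0$), show that the proposition is equivalent to the assertion that $u$ maximizes $\mathcal{G}_{-\gamma}$ on $\mathcal{E}^{1}(X,\omega)$; but the essential input for that assertion is once more the geodesic convexity of Proposition~\ref{pro:convexity of twisted mab}, so the genuine content of the proof is concentrated there.
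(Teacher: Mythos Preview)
Your reduction in the final paragraph is precisely the paper's route: via the inequality \eqref{eq:f smal then g} of Theorem~\ref{thm:f and g when beta neg} (with equality at the solution $u$), minimizing $\mathcal{K}_{\theta}$ is reduced to showing that $u$ maximizes $\mathcal{G}_{-\gamma}$. However, you misidentify the convexity input needed to finish. The paper does \emph{not} invoke Proposition~\ref{pro:convexity of twisted mab} (geodesic convexity of $\mathcal{K}_{\theta}$), which, as you correctly flag, would require upgrading a smooth-geodesic statement to weak geodesics and justifying endpoint derivatives in low regularity. Instead it cites Berndtsson's result \cite{bern} that the twisted \emph{Ding} functional $-\mathcal{G}_{\theta}$ is convex along $C^{0}$-geodesics. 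This is a genuinely different convexity theorem: it concerns $\mathcal{G}$ rather than $\mathcal{K}$, it holds at the level of bounded potentials with no appeal to $C^{1,1}$ regularity, and its proof (via positivity of direct image bundles) is independent of the second-variation computation behind Proposition~\ref{pro:convexity of twisted mab}. Once one has convexity of $-\mathcal{G}_{\theta}$ along $C^{0}$-geodesics, any critical point is a global minimizer and the argument is one line.

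Your first approach---running the Bando--Mabuchi argument directly on $\mathcal{K}_{\theta}$---is the one the paper attributes to Stoppa \cite{st} immediately after the proposition, noting that it requires the deep Chen--Tian regularity theory for $C^{1,1}$-geodesics. So it is a valid alternative, but the obstacles you anticipate are real and are exactly what the paper sidesteps by switching from $\mathcal{K}_{\theta}$ to $\mathcal{G}_{\theta}$.
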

\begin{proof}
By Theorem \ref{thm:f and g when beta neg} it will be enough to prove
that $u$ minimizes the corresponding twisted Ding functional $-\mathcal{G}_{\theta}.$
But $-\mathcal{G}_{\theta}$ is convex along $C^{0}-$geodesics \cite{bern}
and hence it is minimized on any critical point $u.$ 
\end{proof}
In the case when $\theta\geq0$ is smooth Stoppa \cite{st} deduced
the previous proposition from the geodesic convexity of $\mathcal{K}_{\theta},$
combined with the deep regularity theory for $C^{1,1}-$geodesics
of Chen-Tian (in the more general setting of twisted constant scalar
curvature metrics).

\subsection{\label{sub:Proof-of-Corollary} Alpha-invariants and Nadel sheaves}

In the twisted Kähler-Einstien setting we get the following refinement
of Theorem \ref{thm:alpha invariant intro}: 
\begin{thm}
\label{cor:alpha inv for twisted}Let $\gamma$ be a positive number
and $\theta$ a closed $(1,1)-$form on the $n-$dimensional compact
complex manifold $X$ such that the class $-(\gamma c_{1}(K_{X})+[\theta])$
in $H^{2}(X,\R)$ is Kähler (i.e. contains some Kähler form) 
\begin{itemize}
\item If the alpha-invariant of the class \textup{$-(\gamma c_{1}(K_{X})+[\theta])$}
satisfies 
\[
\alpha>\gamma\frac{n}{n+1}
\]
 then the class contains a Kähler form $\omega$ which solves the
twisted Kähler-Einstein equation 
\begin{equation}
\mbox{Ric}\omega=\gamma\omega+\theta\label{eq:twisted k-e eq in thm}
\end{equation}
 and which minimizes the twisted Mabuchi K-energy $\mathcal{K}_{\theta}.$ 
\item More precisely, if $u_{j}$ is a normalized asymptotically minimizing
sequence for $\mathcal{K}_{\theta}$ then any given $L^{1}-$accumulation
point $u_{\infty}$ of $u_{j}$ is \emph{either} the potential of
a $\theta-$twisted Kähler-Einstein metric\emph{ or }$u_{\infty}$
defines a Nadel type multiplier ideal sheaf, i.e. $\int_{X}e^{-t\gamma u_{\infty}}dV=\infty$
for any $t>\frac{n}{n+1}.$ 
\end{itemize}
\end{thm}
The parameter $\gamma$ may, of course, be set to one after scaling
$\omega$ but it has been included for later convenience. In the standard
un-twisted case, i.e. when $\theta=0$ the first point in the previous
corollary is due to Tian \cite{ti1}, who used the continuity method,
which as explained above is not applicable in the general twisted
setting. As for the second point above it generalizes a result of
Nadel \cite{na} and Demailly-Kollar \cite{d-j} concerning the case
when $u_{j}$ is a subsequence of the curve $u_{t}$ appearing in
the continuity method (see remark \ref{rem:nadel seq}) and hence
the result in the second point above is new even when $\theta=0$
and it implies the second point in Cor \ref{cor:min of mab intro}.
Indeed, when $X$ is Fano with no non-trivial holomorphic vector fields
it is well-known that there exists a (unique) Kähler-Einstein metric
iff $\mathcal{K}$ is proper (see section \ref{sub:Intermezzo:-properness-vs}).
Hence, either (1) $X$ admits a Kähler-Einstein metric and the convergence
in Cor \ref{cor:min of mab intro} then follows from Theorem \ref{thm:main intro}
or (2) it does not and then one applies Theorem \ref{cor:alpha inv for twisted}.

Twisted Kähler-Einstein metrics and the corresponding twisted Mabuchi
K-energy recently appeared in the works of Fine \cite{fine} and Song-Tian
\cite{s-t} (see also \cite{st} for relations to stability). Note
that for a twisting form $\theta$ which is not semi-positive the
minimizing property of the solution furnished by the Theorem above
is not automatic and moreover there are no uniqueness properties of
the solutions (see the discussion and references on p. 65 in \cite{ta}
for the Riemann surface case).

\subsubsection{Proof of. Theorem \ref{cor:alpha inv for twisted}}

The first point of the corollary is a direct consequence of Theorem
\ref{thm:alpha invariant intro} applied to the twisted Kähler-Einstein
setting. Next, we show how the proof can be refined so as to give
a proof of the second point in the corollary. After scaling we may
assume that $\gamma=1.$ Let $u_{j}$ be an asymptotic minimizinf
sequence for $\mathcal{K}_{\theta}$ such that $u_{j}\rightarrow u_{\infty}$
in $L^{1}$ (by weak compactness such an $u_{\infty}$ always exists).
If the second alternative in the statement of Cor \ref{cor:alpha inv for twisted}
does not hold then there is $t>\frac{n}{n+1}$ such that $\int e^{-tu_{\infty}}dV<\infty.$
But then it follows from the semi-continuity result of Demailly-Kollar
\cite{d-j} (see Thm \ref{thm:(Demailly-Kollar-).-Let} in the appendix)
that $\int e^{-tu_{j}}dV\leq C<\infty$ after perhaps replacing $t$
with any strictly smaller number. In the notation of the proof of
Thm \ref{thm:alpha invariant intro} this means that $\mathcal{L}_{t}^{-}(u_{j})>-C'$
and hence repeating that proof word for word shows that 
\begin{equation}
\mathcal{K}_{\theta}(u_{j})\geq J(u_{j})/C-C\label{eq:proof of cor nadel}
\end{equation}
 for some constant $C.$ Finally Theorem \ref{thm:refined conv towards opt}
shows that $u$ is a minimizer for $\mathcal{K}_{\theta}$ and satisfies
the twisted Kähler-Einstein equation. 
\begin{rem}
\label{rem:nadel seq}The second point in Cor \ref{cor:alpha inv for twisted}
generalizes Nadel's original result \cite{na}; letting $T$ be the
sup over all positive $t$ such that the equations appearing in Aubin's
continuity method have a solution $\omega_{t}:$ 
\begin{equation}
\mbox{Ric\ensuremath{\omega_{t}=t\omega_{t}+(1-t)\omega},}\label{eq:cont equation}
\end{equation}
Nadel shows (see also the simplifications in \cite{d-j}) that either
$T\ge1$ and the potential $u_{t}$ of $\omega_{t}$ converges to
a Kähler-Einstein metric or there is sequence $t_{j}\rightarrow T$
such $u_{t_{j}}\rightarrow u_{T}$ for $u_{T}(=u_{\infty})$ as in
the second point of Cor \ref{cor:alpha inv for twisted}. To see that
this is a special case of Cor \ref{cor:alpha inv for twisted} we
argue as above; if the second alternative does not hold then one checks
that $u_{t}$ is an asymptotic minimizing sequence for $\mathcal{K}_{\theta_{T}}$where
$\theta_{t}:=(1-t)\omega$ (see below) and hence we may apply the
second point in Cor \ref{cor:min of mab intro} (with $t=\gamma\leq1$
and $\theta=\theta_{T})$ to deduce that $u_{t_{j}}\rightarrow u_{T},$
where $\omega_{u_{T}}$ solves the twisted Kähler-Einstein equation
for $\theta=\theta_{T}.$ But then it follows from the definition
of $T$ that $T\geq1$ and hence $\omega_{u_{1}}$ is a Kähler-Einstein
metric proving Nadel's result. Finally, the asymptotic minimizing
property above is shown as follows: as is well-known $\mathcal{K}_{0}(u_{t})$
is decreasing in $t$ and hence $J(u_{t})\leq C$ (by \ref{eq:proof of cor nadel}).
But since $\theta_{t}\geq0$ $u_{t}$ is the absolute minimizer of
$\mathcal{K}_{\theta_{t}}$ (see the end of Remark \ref{rem:twisted calabi flow})
one deduces (also using $J(u_{t})\leq C)$ the desired asymptotic
minimizing property (by the same argument used in Step 2 in the proof
of Cor \ref{cor:alpha for pairs gives ec}). 
\end{rem}

\section{\label{sec:Convergence-of-the-cal}Convergence of the Calabi flow }

In this section we consider for simplicity the un-twisted case, i.e.
$\theta=0$ (see Remark \ref{rem:twisted calabi flow} below for the
twisted case). First recall that the \emph{Mabuchi metric} $g$ on
$\mathcal{H}(X,\omega)$ is defined by first identifying the tangent
space of $\mathcal{H}(X,\omega)\subset\mathcal{C}^{\infty}(X)$ at
the point $u$ with $\mathcal{C}^{\infty}(X)$ and then letting 
\[
g(v,v)_{|u}:=\int_{X}v^{2}(\omega_{u})^{n}/n!
\]
 We denote by $d(\cdot,\cdot)$ the corresponding distance function
on $\mathcal{H}(X,\omega).$ It follows directly from the variational
definition of the Mabuchi's K-energy functional $\mathcal{K}$ (see
Proposition \ref{pro:var def of k}) that its gradient on $(\mathcal{H}(X,\omega),g)$
is given by 
\[
\nabla\mathcal{K}_{|u}=-(R_{\omega_{u}}-R),
\]
 where $R_{\omega_{u}}$ denotes the scalar curvature of the Kähler
metric $\omega_{u}$ and $R$ its average, which is an invariant of
the class $[\omega].$ The \emph{Calabi functional} on $\mathcal{H}(X,\omega)$
may be defined as the squared norm of $\nabla\mathcal{K},$ i.e. 
\[
Ca(u):=\int_{X}(R_{\omega_{u}}-R)^{2}\omega_{u}^{n}/n!,
\]
We let $u_{t}$ evolve according to the Calabi flow on the level of
Kähler potentials, i.e. 
\begin{equation}
\frac{du_{t}}{dt}=(R_{\omega_{u_{t}}}-R)(=-\nabla\mathcal{K}_{|u_{t}})\label{eq:calabi flow on potentials}
\end{equation}
Before turning to the proof of Theorem \ref{thm:conv of cal intro}
we recall the result of Tian \cite{ti} saying that if $H^{0}(TX)=\{0\}$
then $X$ admits a Kähler-Einstein metric iff the functional $\mathcal{K}$
is proper (wrt energy); compare section \ref{sub:Intermezzo:-properness-vs}.
By Cor \ref{cor:min of mab intro} and the uniqueness of the Kähler-Einstein
metric under the assumptions above \cite{b-m} it will be enough to
prove that 
\begin{equation}
\lim_{t\rightarrow\infty}\mathcal{K}(u_{t})=\inf_{\mathcal{H}(X,\omega)}\mathcal{K}>-\infty\label{eq:flow minim seq}
\end{equation}
 To this end first we first recall that following inequality of Chen
\cite{chen}: 
\begin{equation}
\mathcal{K}(u)-\mathcal{K}(v)\leq d(u,v)Ca(u)^{1/2}\label{eq:k bounded by dist cal}
\end{equation}
 Moreover, as shown by Calabi-Chen (see \cite{ca-ch}) $d$ is decreasing
under the Calabi flow and hence 
\begin{equation}
d(u_{t},v_{t})\leq d(u_{0},v_{0})\label{eq:decreasing under cal flow}
\end{equation}
 for $u_{t}$ and $v_{t}$ evolving according to the Calabi flow \ref{eq:calabi flow on potentials}.
In particular, if we take $v_{0}:=u_{KE}$ as a potential of a Kähler-Einstein
metric $\omega_{KE},$ then $v_{t}=v_{0}$ and hence 
\begin{equation}
\mathcal{K}(u_{t})-\mathcal{K}(u_{KE})\leq d(u_{0},u_{KE})Ca(u_{t})^{1/2}\label{eq:mab smaller than ca}
\end{equation}
 Next, observe that there is a sequence $t_{j}$ such that 
\begin{equation}
Ca(u_{t_{j}})\rightarrow0\label{eq:pf of conv of cala}
\end{equation}
 as $t_{j}\rightarrow\infty.$ Indeed, by the variational formula
for $\mathcal{K}$ we have 
\begin{equation}
\frac{d\mathcal{K}(u_{t})}{dt}=-Ca(u_{t})\leq0\label{eq:mab along cal fl}
\end{equation}
 Hence, if it would be the case that $Ca(u_{t})\geq\epsilon>0$ as
$t\rightarrow\infty$ then this would force $\mathcal{K}(u_{t})\rightarrow-\infty$
as a $t\rightarrow\infty$ which contradicts the assumption that $\mathcal{K}(u)$
be proper and in particular bounded from below. This proves the claim
\ref{eq:pf of conv of cala} and hence, by \ref{eq:mab smaller than ca},
we also get 
\begin{equation}
\lim_{t_{j}\rightarrow\infty}\mathcal{K}(u_{t_{j}})\leq\mathcal{K}(u_{KE})=\inf_{\mathcal{H}(X,\omega)}\mathcal{K}\label{eq:subseq for mab along flow}
\end{equation}
 where the last property is a special case of Prop \ref{pro:minim of k-ene for pos theta}.
Finally, by \ref{eq:mab along cal fl} $\mathcal{K}(u_{t})$ is decreasing
and hence the previous inequality implies the inequality \ref{eq:flow minim seq},
finishing the proof of the theorem.
\begin{rem}
The previous proof gave the weak convergence of $\omega_{u_{t}},$
which is equivalent  to the $L^{1}-$convergence of the normalized
potentias $u_{t}-\sup u_{t}.$ But in fact the $L^{1}-$convergence
holds for $u_{t}$ (i.e. without normalising). Indeed, by the monotonicity
and properness of $\mathcal{K}$ we have that $J_{\omega}(u_{t})\leq C.$
Since, $d\mathcal{E}_{\omega}(u_{t})/dt=0$ this means that $\int u_{t}\omega^{n}\leq C'.$
But it follows from standard compactness arguments (for example used
in \cite{bbgz}) that $\{J_{\omega}\geq C\}\cap\{\int(\cdot)\omega^{n}\leq C'\}$
is relatively compact in $PSH(X,\omega)$ and hence so is the set
$\{u_{t}\},$ showing that there is no need to normalise $u_{t}.$ 
\end{rem}
One final remark about the twisted case:
\begin{rem}
\label{rem:twisted calabi flow}The previous proof admits a straight-forward
generalization to the setting of twisted Kähler-Einstein metrics when
$\theta\geq0,$ where $R_{\omega}$ is replaced by the trace of the
twisted Ricci curvature. Indeed, if $\theta\geq0$ the twisted functional
$\mathcal{K}_{\theta}$ is still geodesically convex (see Prop \ref{pro:convexity of twisted mab})
which at least formally implies \ref{eq:k bounded by dist cal} and
\ref{eq:decreasing under cal flow}. Hence the Hessian of $\mathcal{K}_{\theta}$
(defined wrt the metric $g$ above) is a semi-positive Hermitian operator
which implies that the corresponding flow decreases the length of
any initial curve and is hence distance decreasing (compare the proofs
in \cite{ca-ch} and \cite{ch3}). The estimate \ref{eq:mab smaller than ca}
is more involved as it requires a notion of \emph{weak} $C^{1,1}-$geodesics,
but the proof is a simple modification of the argument in \cite{chen}. 
\end{rem}

\section{\label{sub:K=00003D0000E4hler-Einstein-metrics-on log} Log Fano
manifolds and Donaldson's equation}

In this section we will consider the twisted Kähler-Einstein setting
when $\beta<0$ in the singular case when the twisting form $\theta$
is a linear combination of the integration currents along codimension
one analytic subvarieties in $X,$ i.e. 
\[
\theta:=\sum c_{i}\delta_{D_{i}},
\]
 where $D_{i}$ is an irreducible subvariety in $X.$ In other words,
$D_{i}$ is an irreducible effective divisor and we write 
\begin{equation}
\Delta:=\sum c_{i}D_{i},\label{eq:def of divisor delta}
\end{equation}
 for the corresponding $\R-$divisor on $X$ (abusing notation slightly
we will also denote its support by $\Delta).$ We will assume that
the the $D_{i}:s$ are distinct and smooth with simple normal crossings
(i.e. there are local coordinates where $D_{i}=\{z_{m(i)}=0\})$ and
$0<c_{i}<1.$ In the language of the minimal model program in algebraic
geometry this means that the \emph{log pair} $(X,\Delta)$ is\emph{
klt} (Kawamata Log Terminal). The measure $\mu_{0}$ in formula \ref{eq:mu zero in twisted setting}
is then well-defined and may be written as 
\begin{equation}
\mu_{0}=\mu_{\Delta}:=\prod_{i}|s_{i}|^{2c}dV\label{eq:measure def by klt div}
\end{equation}
 for some volume form $dV$ on $X,$ where $s_{i}$ is a section of
a holomorphic line bundle $L_{D_{i}}$ cutting out $D_{i}$ and $|\cdot|$
denote fixed smooth metrics on $L_{D_{i}}.$ The equation \ref{eq:cohom relation}
then translates to $[\omega]=c_{1}(-(K_{X}+L_{\Delta}))$ wich is
hence assumed to be a Kähler class (i.e. the pair $(X,\Delta)$ defines
a log Fano manifold). By Prop \ref{pro:reg} any finite energy solution
$u$ of the corresponding mean field equation is locally bounded.
Moreover, the current $\omega_{u}$ satisfies the following singular
Kähler-Einstein equation (to simplify the notation we set $\beta=-1):$
\begin{equation}
\mbox{Ric}\,\omega_{u}=\omega_{u}+\delta_{\Delta}\label{eq:singular k-e}
\end{equation}
 in the sense of currents (where $\mbox{Ric}\omega_{u}$ now denotes
the curvature current of the induced singular metric on $-K_{X}).$
We will mainly be concerned here with the case when $\Delta=(1-t)D,$
where $t>0$, $D$ is a smooth divisor. As is well-known, in the special
case when $t=1/m$ the pair $(X,\Delta)$ determines an orbifold structure
on $X$ with codimension one stabilizers $\Z/m\Z.$ Then \ref{eq:singular k-e}
in particular holds for any Kähler-Einstein metric on $X$ which is
smooth in the orbifold sense, which from a differential geometric
point of view means that $\omega$ has cone angles $2\pi1/m$ in the
directions transverse to $D$ (see for example the discussion in \cite{r-t}). 

In our general setting we define the\emph{ alpha-invariant of the
pair $(X,\Delta)$} by 
\[
\alpha(X,\Delta):=\alpha(-c_{1}(K_{X}+\Delta),\mu_{\Delta}).
\]
 In the orbifold case $\alpha(X,\Delta)$ coincides with the alpha-invariant
(i.e. the log canonical threshold) of the orbifold associated to $(X,\Delta)$
and was studied by Demailly-Kollar \cite{d-j}.

Applying Theorem \ref{thm:alpha invariant intro} combined with Kolodziej's
regularity theorem (just as in the proof of Theorem \ref{thm:existence etc when beta pos})
now gives the first statement in the following corollary concerning
global continuity. To obtain smoothness on $X-\Delta$ we will show
that the solution is the limit of smooth solutions to the twisted
Kähler-Einstein equations obtained by replacing the current $\Delta$
with a sequence of regularizations. 
\begin{cor}
\label{cor:alpha for pairs gives ec}Let $(X,\Delta)$ be a pair as
above and assume that 
\begin{equation}
\alpha(X,\Delta)>\frac{n}{n+1}\label{eq:alpha in statement cor alp pair}
\end{equation}
 Then there is a unique Hölder continuous solution $u$ to equation
\ref{eq:singular k-e}. \textup{\emph{Moreover, $\omega_{u}$ is a
smooth Kähler-Einstein metric on $X-\Delta$ and globally on $X$
it is a Kähler current, i.e. there is a Kähler form $\omega_{0}$
on $X$ such that $\omega_{u}\geq\omega_{0}$ on $X.$ When $(X,\Delta)$
defines an orbifold $\omega_{u}$ is smooth in the orbifold sense.}}\emph{ }\end{cor}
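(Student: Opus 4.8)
The plan is to read off the first assertion directly from Theorem \ref{thm:alpha invariant intro} together with Kolodziej's estimates, and to upgrade the regularity in the klt case by regularizing the divisor and passing to a limit.

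\textbf{The H\"older continuous solution.} In the twisted K\"ahler--Einstein setting attached to $(X,\Delta)$ one has $\beta=-1$, $[\omega]=-c_{1}(K_{X}+\Delta)$, $\mu_{0}=\mu_{\Delta}$ and $\alpha([\omega],\mu_{0})=\alpha(X,\Delta)$, so the hypothesis \ref{eq:alpha in statement cor alp pair} is exactly the condition $\gamma=1<\alpha(X,\Delta)\tfrac{n+1}{n}$ of Theorem \ref{thm:alpha invariant intro}. Since $c_{i}<1$ for every $i$, the density $\prod_{i}|s_{i}|^{-2c_{i}}$ lies in $L^{p}(dV)$ for some $p>1$, so $\mu_{\Delta}$ (normalized to a probability measure) has finite energy --- its potential is in fact bounded by Kolodziej's theorem --- and Theorem \ref{thm:alpha invariant intro} produces $u\in\mathcal{E}^{1}(X,\omega)$ solving the Monge--Amp\`ere mean field equation, i.e. the weight $\psi$ of the corresponding metric on $-(K_{X}+L_{\Delta})$ solves \ref{eq:monge a eq for log pa}. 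To obtain H\"older continuity I would invoke the first bullet of Proposition \ref{pro:reg}: as $u\in\mathcal{E}^{1}$ has no Lelong numbers, $e^{-u}\in L^{q}(dV)$ for all $q$ by Skoda's inequality, so $MA(u)=e^{-u}\mu_{\Delta}/\!\int e^{-u}\mu_{\Delta}$ has $L^{p'}$-density for some $p'>1$ and Kolodziej's H\"older estimate applies. That the curvature current $\omega=dd^{c}\psi$ then satisfies \ref{eq:singular k-e} is the computation recalled just before Proposition \ref{pro:skoda k-e}: apply $dd^{c}$ to $-\log\omega^{n}=\psi+\psi_{\Delta}+(\mathrm{smooth})$ and use that the bounded-potential measure $\omega^{n}$ puts no mass on $\Delta$.

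\textbf{Regularizing the divisor in the klt case.} Assume now $c_{i}\ge0$. For $\epsilon>0$ I would replace $\mu_{\Delta}$ by the \emph{smooth} volume form $\mu_{\epsilon}$ of density $\prod_{i}(|s_{i}|^{2}+\epsilon)^{-c_{i}}e^{-\phi_{\Delta}}$ with respect to $dV$. These satisfy $\mu_{\epsilon}\to\mu_{\Delta}$ in $L^{p}$, $\mu_{\epsilon}\le C\mu_{\Delta}$ uniformly, and --- because $c_{i}\ge0$ and $|s_{i}|^{2}$ is bounded --- $\mu_{\epsilon}\ge c_{0}\,dV$ with $c_{0}>0$ independent of $\epsilon$. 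The associated twisting form $\theta_{\epsilon}$ stays in the fixed class $[\delta_{\Delta}]=c_{1}(L_{\Delta})$, so the K\"ahler class $[\omega]$ is unchanged, and since $\mu_{\epsilon}\le C\mu_{\Delta}$ one gets $\alpha([\omega],\mu_{\epsilon})\ge\alpha(X,\Delta)$, hence the coercivity estimate of section \ref{sub:Proof-of-Theorem alpha inv intro} holds \emph{uniformly} in $\epsilon$. By Theorem \ref{thm:alpha invariant intro} and Proposition \ref{pro:reg} (applied with the volume form $\mu_{\epsilon}$) one gets \emph{smooth} K\"ahler metrics $\omega_{\epsilon}=\omega+dd^{c}u_{\epsilon}$ with $\mbox{Ric}\,\omega_{\epsilon}-\theta_{\epsilon}=\omega_{\epsilon}$, and the uniform coercivity together with Kolodziej's a priori estimate applied to $\omega_{\epsilon}^{n}=e^{-u_{\epsilon}}\mu_{\epsilon}$ (whose density is uniformly in $L^{p'}$) gives $\|u_{\epsilon}\|_{\infty}\le C$ uniformly. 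Passing to a subsequence, $u_{\epsilon}\to u_{\infty}$ in $L^{1}$ and in capacity; by Bedford--Taylor continuity $u_{\infty}$ is a bounded solution of \ref{eq:monge a eq for log pa} and $\omega:=\omega+dd^{c}u_{\infty}$ is the solution one works with (this is the force of ``may be taken to be''). Once the lower bound of the next paragraph is available, the equation for $u_{\epsilon}$ is uniformly elliptic with locally uniformly bounded smooth coefficients on $X\setminus\Delta$, so local second-order and Schauder estimates give $u_{\epsilon}\to u_{\infty}$ in $C^{\infty}_{\mathrm{loc}}(X\setminus\Delta)$, whence $\omega$ is a smooth K\"ahler--Einstein metric there.

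\textbf{The K\"ahler current estimate (the main obstacle) and the orbifold case.} What remains, and is the genuinely delicate point, is a uniform lower bound $\omega_{\epsilon}\ge c\,\omega$ on all of $X$, which on letting $\epsilon\to0$ yields $\omega\ge c\,\omega$ for the limit current. Writing $\theta_{\epsilon}=\sum_{i}c_{i}\,\eta_{i,\epsilon}+(\mathrm{smooth})$ with $\eta_{i,\epsilon}=dd^{c}\log(|s_{i}|^{2}+\epsilon)$, a direct computation (using the cancellation $\partial(|s_i|^2+\epsilon)=\bar f\,(f'-f\psi_z)$ in a local trivialization) shows $\eta_{i,\epsilon}\ge-C\omega$ uniformly in $\epsilon$, although $\eta_{i,\epsilon}$ is \emph{not} bounded above near $D_{i}$; since $c_{i}\ge0$ this gives the one-sided bound $\theta_{\epsilon}\ge-C\omega$ --- this is exactly where the klt hypothesis is used. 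I would then run the Chern--Lu (Aubin--Yau) inequality for the identity map $(X,\omega_{\epsilon})\to(X,\omega)$ on the auxiliary function
\[
G_{\epsilon}:=\log\mathrm{tr}_{\omega_{\epsilon}}\omega+\lambda\sum_{i}c_{i}\log(|s_{i}|^{2}+\epsilon)-A\,u_{\epsilon},
\]
with $\lambda>0$ small and $A$ large: the logarithmic term forces the maximum of $G_{\epsilon}$ onto $X\setminus\Delta$, and its Laplacian contributes the term $\lambda\sum_{i}c_{i}\,\mathrm{tr}_{\omega_{\epsilon}}\eta_{i,\epsilon}$, which together with the lower bound $\theta_{\epsilon}\ge-C\omega$ entering the Ricci term (via $\mbox{Ric}\,\omega_{\epsilon}=\omega_{\epsilon}+\theta_{\epsilon}$) and the uniform bound on $u_{\epsilon}$ should yield $\mathrm{tr}_{\omega_{\epsilon}}\omega\le C$ at the maximum and hence everywhere; combined with $\omega_{\epsilon}^{n}=e^{-u_{\epsilon}}\mu_{\epsilon}\ge c'\,\omega^{n}$ this bounds the eigenvalues of $\omega_{\epsilon}$ relative to $\omega$ from below, i.e. $\omega_{\epsilon}\ge c\,\omega$. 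The hard part is to make this rigorous while controlling the interaction of the unbounded-above currents $\eta_{i,\epsilon}$ with the barrier near $\Delta$, i.e. choosing $\lambda,A$ and absorbing the error terms (as in the techniques developed for conic K\"ahler--Einstein metrics). Finally, an orbifold pair ($c_{i}=1-1/m_{i}$ with the usual coprimality) is in particular klt, so $\omega\ge c\,\omega$ holds; pulling \ref{eq:monge a eq for log pa} back along a local uniformizing chart $z_{i}=w^{m_{i}}$ turns it into a smooth, uniformly elliptic complex Monge--Amp\`ere equation with bounded solution, and local regularity theory then shows the pulled-back weight is smooth, i.e. $\omega$ is a smooth orbifold K\"ahler metric.
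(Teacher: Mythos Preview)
Your argument for the H\"older continuous solution is exactly the paper's: Theorem \ref{thm:alpha invariant intro} supplies a finite-energy solution, Skoda gives $e^{-u}\in L^{q}$, and then Kolodziej yields H\"older continuity. Your overall strategy in the klt case---regularize, get smooth twisted K\"ahler--Einstein metrics $\omega_{\epsilon}$, obtain uniform $C^{0}$ via coercivity plus Kolodziej, then push through a Laplacian estimate and local elliptic theory on $X\setminus\Delta$---is also the paper's strategy, and your identification of the limit (without the variational Step 2) is fine; the paper itself remarks that Step 2 is not strictly needed.

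The substantive difference is the \emph{choice} of regularization, and it is precisely what makes your ``main obstacle'' disappear in the paper's proof. You regularize the density to $\prod_i(|s_i|^2+\epsilon)^{-c_i}$, which produces a twisting form $\theta_\epsilon$ that is only bounded below, $\theta_\epsilon\ge -C\omega$, and is unbounded above near $\Delta$. As you correctly diagnose, this forces a delicate barrier argument in Chern--Lu, because the Ricci lower bound $\mathrm{Ric}\,\omega_\epsilon=\omega_\epsilon+\theta_\epsilon\ge\omega_\epsilon-C\omega$ is useless until you already know $\omega_\epsilon\ge c\omega$. The paper instead regularizes the singular \emph{psh weight} $\psi_\Delta$ on $L_\Delta$ by smooth psh weights $\psi_\Delta^{(j)}\searrow\psi_\Delta$ (Demailly approximation, using $c_i\ge0$), so that $\theta_j:=dd^c\psi_\Delta^{(j)}\ge 0$ are genuine closed positive forms. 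Then $\mathrm{Ric}\,\omega_{u_j}=\omega_{u_j}+\theta_j\ge\omega_{u_j}$ is a clean lower Ricci bound \emph{in terms of $\omega_{u_j}$ itself}, and the standard Chern--Lu inequality
\[
\Delta_{\omega_{u_j}}\bigl(\log\mathrm{tr}_{\omega_{u_j}}\omega_0-(C+1)u_j\bigr)\ge -(C+1)n+\mathrm{tr}_{\omega_{u_j}}\omega_0
\]
(with $C$ the bisectional curvature bound of the fixed $\omega_0$) immediately gives $\omega_{u_j}\ge\frac{1}{C'}\omega_0$ globally on $X$---no barrier, no interaction with $\Delta$, nothing delicate. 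The upper bound on $\omega_{u_j}$ then follows on compacts of $X\setminus\Delta$ from the equation and the $C^0$ bound, and the rest is Evans--Krylov and bootstrapping, exactly as you outline. So your proof is not wrong, but the step you flag as hard becomes a two-line consequence of choosing a regularization that preserves the positivity of the twist; this is the one idea you are missing.
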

\begin{proof}
The existence of a Hölder continuous solution $u$ is a special case
of Theorem \ref{thm:alpha invariant intro} combined with Kolodziej's
result (just as in the proof of Prop \ref{pro:reg}). The uniqueness
follows from the very recent results in \cite{bern} (compare the
proof of Theorem \ref{thm:don eq} below). 

\emph{Higher order regularity when $\theta:=\delta_{\Delta}\geq0:$}

Let $\Theta\in c_{1}(L_{\Delta})$ be the curvature form of the fixed
smooth metric on the $\R-$line bundle $L_{\Delta},$ and let $u_{\Delta}^{(j)}:=\log(\sum|s_{i}|^{2c_{i}}+1/j).$
Then $\theta_{j}:=\Theta+dd^{c}u_{\Delta}^{(j)}$ is a sequence of
Kähler forms converging to $\delta_{\Delta},$ Take $u_{j}$ to be
a sequence of minimizers, normalized so that $\sup_{X}u_{j}=0,$ of
the corresponding twisted Mabuchi functionals $\mathcal{K}_{\theta_{j}}.$
Since $\alpha(-c_{1}(K_{X}+L_{\Delta}))\geq\alpha((-c_{1}(K_{X}+L_{\Delta}),\mu_{\Delta})(:=\alpha(X,\Delta))>n/(n+1)$
such minimizers exist and are smooth according to Thm \ref{cor:alpha inv for twisted}
and satisfy 
\begin{equation}
\frac{\omega_{u_{j}}^{n}}{n!V}=\frac{e^{-u_{j}}\mu_{\Delta}^{(j)}}{\int_{X}e^{-u_{j}}\mu_{\Delta}^{(j)}};\,\,\,\,\mbox{Ric \ensuremath{\omega_{u_{j}}=}}\omega_{u_{j}}+\theta_{j}\label{eq:pf of smooth sol for pair}
\end{equation}
 where $\mu_{\Delta}^{(j)}$ are volume forms on $X$ increasing to
the measure $\mu_{\Delta}.$ We may (after perhaps passing to a subsequence)
assume that $u_{j}\rightarrow u_{\infty}$ in $L^{1}(X).$

\emph{Step 1:} $J_{\omega}(u_{j})\leq C,\,\,\,\int e^{-(1+\epsilon)u_{j}}\mu_{\Delta}\leq C$

This is proved exactly as in the proof of Theorem \ref{cor:alpha inv for twisted}
using that $\mu_{\Delta}^{(j)}\leq\mu_{\Delta}$ and the assumed bound
on the alpha-invariant of $(-c_{1}(K_{X}+L_{\Delta}),\mu_{\Delta}).$

\emph{Step 2: The sequence $u_{j}$ is an asymptotic minimizer of
$K_{\theta}$ (and hence $\omega_{u_{j}}\rightarrow\omega_{u_{\infty}}$
solving equation \ref{eq:singular k-e})}

This also follows as before using that $\mu_{\Delta}^{(j)}\leq\mu_{\Delta}.$ 

\emph{Step 3: $\sup_{X}\left|u_{j}\right|\leq C$}

By the first equation in \ref{eq:pf of smooth sol for pair} and step
1 above we have that $\omega_{u_{j}}^{n}/\omega_{0}^{n}$ is uniformly
bounded in $L^{(1+\epsilon)}(X,\omega_{0}^{n})$ and hence Kolodziej's
theorem \cite{ko} gives the desired $C^{0}-$bound

\emph{Step 4: $(a)\,\omega_{u_{j}}\geq\frac{1}{C}\omega_{0}\,\mbox{\,\ on\,\ensuremath{X},\,\,\,}(b)\,\sup_{K}\left|\omega_{u_{j}}\right|_{\omega_{0}}\leq C_{K}$
$\mbox{on\,}$ $K\subset\subset X-\Delta$}

First observe that since $\theta_{j}\geq0$ equation \ref{eq:pf of smooth sol for pair}
shows that the Ricci curvature of $\omega_{u_{j}}$ is uniformly bounded
from below on $X$ (by a positive constant, but a negative constant
would also be fine for the following argument). Combined with the
uniform bound on $u_{j}$ in the previous step it follows from an
argument in \cite{b-k} which is a variant of the usual Aubin-Yau
Laplacian estimate \cite{au,y} that $(a)$ holds (the author learned
the argument from \cite{rub1} where it used to handle another situation
where $\mbox{Ric \ensuremath{\omega_{u_{j}}}}$is uniformly bounded
from below) . We next recall the argument: it follows directly from
the Chern-Lu (in)equality that 
\[
\Delta_{\omega_{u_{j}}}(\log(Tr_{\omega_{u_{j}}}\omega_{0})\geq-C(Tr_{\omega_{u_{j}}}\omega_{0})
\]
 using that there is a positive lower bound of the Ricci curvature
of $\omega_{u_{j}}$ and where $C$ is the upper bound of the bisectional
curvature of $\omega_{0}.$ Since, $\mbox{Ric \ensuremath{\omega_{u_{j}}\geq\omega_{u_{j}}}}$
it follows that there is a constant $C$ independent of $u_{j}$ such
that, setting $v_{j}:=Tr_{\omega_{u_{j}}}\omega_{0},$ we have
\begin{equation}
\Delta_{\omega_{u_{j}}}(\log v_{j}-(C+1)u_{j})\geq-(C+1)n+v_{j}\label{eq:second order yau ineq}
\end{equation}
 Evaluating the inequality above at a point where $\log v_{j}-(C+1)u_{j}$
attains its maximum (so that the lhs above is non-positive) and using
that $u_{j}$ is, by Step 3 above, uniformly bounded gives an upper
bound on $\sup_{X}v_{j}$ which implies the desired lower bound on
$\omega_{u_{j}}.$ Next, by equation \ref{eq:pf of smooth sol for pair}
and Step 3 above we have that $\omega_{u_{j}}^{n}/\omega_{0}^{n}$
is uniformly bounded from above on any fixed compact set $K$ in $X-\mbox{supp}\Delta$
which finishes the proof of Step 4.

Step 5: $\exists\alpha>0:\,\,\left\Vert u_{j}\right\Vert _{C^{2,\alpha}(K)}\leq C\mbox{\,\ on}K\subset\subset X-\Delta$

Given the previous estimates which, in particular, show that $\left\Vert u_{j}\right\Vert _{L^{\infty}(K)}\leq C,$
$\left\Vert \Delta_{\omega_{0}}u_{j}\right\Vert _{L^{\infty}(K)}\leq C$
and $MA(u_{j})\geq1/C,$ step 5 follows from a complex version of
the Evans-Krylov-Trudinger theory for local non-linear elliptic equations
(see Thm 5.1 in \cite{bl2}).

Finally, using the standard linear elliptic local (Schauder) estimates
and bootstrapping shows that $\left\Vert u_{j}\right\Vert _{C^{p,\alpha}(K)}\leq C_{p}$
for any $p>0$ and hence (after perhaps passing to a subsequence)
it follows that $u_{j}\rightarrow u_{\infty}$ in the $\mathcal{C}^{\infty}-$topology
on compacts on $X-\Delta.$ In particular, this shows that $u_{\infty}$
is smooth on $X-\Delta.$ 
\end{proof}
It may be worth pointing out that the variational part of the proof
above (i.e. Step 2) is not really needed as the rest of the argument
anyway produces a bounded function $u_{\infty}$ on $X$ satisfying
the limiting version of the Monge-Ampère equation \ref{eq:pf of smooth sol for pair}
on $X-\Delta$ and hence everywhere since the support of $\Delta$
is a pluripolar set. But one of the main virtues of the variational
approach is that it gives the convergence of \emph{any} sequence $u_{j}$
which is an asymptotic maximizer of the corresponding twisted Mabuchi
functional (under the usual properness assumption). In particular,
the previous corollary can be made more precise giving a singular
variant (i.e applied to $\theta=\delta_{\Delta})$ of the second point
of Thm \ref{cor:alpha inv for twisted} obtained by replacing the
volume form $dV$ used in the exponential integral of $u_{\infty}$
with the measure $\mu_{\Delta}.$

In the orbifold case Cor \ref{cor:alpha for pairs gives ec} is essentially
due to Demailly-Kollar who obtained a solution $\omega$ which is
a Kähler metric in the orbifold sense \cite{d-j}. Strictly speaking
the results in \cite{d-j} where formulated in the classical orbifold
setting of stabilizers of codimension $>1$ (then $X$ has quotient
singularities), but the same arguments are valid in the codimension
one case.

\subsection{\label{sec:Donaldson's-equation-and}Donaldson's equation and the
proof of Theorem \ref{thm:don eq} }

The existence of solutions to Donaldson's equation \ref{eq:donaldsons equ}
will be deduced from the criterion in Cor \ref{cor:alpha for pairs gives ec}
concerning the alpha-invariant of a pair $(X,\Delta)$ and the following
lower bound on such invariants in the particular setting of Donaldson's
equation. One of the ingredients in the proof is a an extension to
pairs of the well-known identification between alpha-invariants and
log canonical thresholds (see the appendix). 
\begin{prop}
\label{pro:Lower bound on alpha of pair}Let $L$ be an ample line
bundle over $X$ and $s$ a holomorphic section of $L$ such that
$D:=\{s=0\}$ is a smooth divisor. Then 
\begin{equation}
\alpha(L,\mu_{(1-\gamma)D})\geq\min\{\gamma,\alpha(L),\alpha((L_{|D})\}\label{eq:lower bound on alpha for pair in prop}
\end{equation}
\end{prop}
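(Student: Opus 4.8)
The plan is to unwind the definition of $\alpha(L,\mu_{(1-\gamma)D})$ and to estimate the exponential integral $\int_X e^{-\alpha(u-\sup_X u)}\mu_{(1-\gamma)D}$ for $u\in PSH(X,\omega)$, where $\omega\in c_1(L)$ is a fixed K\"ahler form and $\mu_{(1-\gamma)D}=|s|^{-2(1-\gamma)}e^{-\phi_\Delta}dV$ (formula \ref{eq:measure def by klt div}). The first step is to recall (from the appendix, which identifies $\alpha$-invariants with log canonical thresholds, in the version extended to pairs) that $\alpha(L,\mu_{(1-\gamma)D})$ equals the log canonical threshold of the pair consisting of $X$ with the coefficient $(1-\gamma)$ along $D$, measured against potentials of $L$. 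Equivalently, $\alpha(L,\mu_{(1-\gamma)D})$ is the supremum of those $\alpha>0$ for which $|s|^{-2(1-\gamma)}e^{-\alpha(u-\sup u)}$ is uniformly integrable over all $u\in PSH(X,\omega)$. So it suffices to show that for any $\alpha<\min\{\gamma,\alpha(L),\alpha((L_{|D}))\}$ this uniform integrability holds.

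The key step is a local dichotomy near $D$. Working in a coordinate chart where $D=\{z_1=0\}$, one splits the estimate into an integral in the $z_1$-direction (transverse to $D$) and an integral over the slices $\{z_1=\mathrm{const}\}\cap D$. Since $u$ is $\omega$-psh, its restriction $u_t$ to a slice $z_1=t$ is, up to bounded error, $\omega_{|D}$-psh on $D$; the hypothesis $\alpha<\alpha((L_{|D}))$ gives a uniform bound $\int_D e^{-\alpha(u_t-\sup_D u_t)}\,\mathrm{vol}_D\le C$. The remaining transverse integral is controlled by $\int_{|z_1|<1}|z_1|^{-2(1-\gamma)}e^{-\alpha\,\mathrm{osc}(u)}\,i\,dz_1\wedge d\bar z_1$, and here the exponent $-2(1-\gamma)$ is integrable in the plane precisely because $1-\gamma<1$, i.e. because $\gamma>0$; the condition $\alpha<\gamma$ is exactly what is needed to absorb the extra growth of $e^{-\alpha u}$ transverse to $D$ into the margin $\gamma-(1-\gamma)$... more precisely, one uses that near $D$ one may bound $u$ above by the sum of its restriction to $D$ and a term comparable to $\log|z_1|$, so the transverse integral behaves like $\int|z_1|^{-2(1-\gamma)-2\alpha c}$ for suitable $c$, and the condition $\alpha<\gamma$ keeps this below the integrability threshold. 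Away from $D$ the weight $|s|^{-2(1-\gamma)}$ is bounded, and integrability there is governed by $\alpha<\alpha(L)$ via the definition of the ordinary $\alpha$-invariant of $L$.

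Finally one patches: cover $X$ by finitely many charts adapted to $D$ (where the above local slicing works) together with charts avoiding $D$ (where the weight is bounded and $\alpha<\alpha(L)$ suffices directly), and sum the uniform bounds. Taking the supremum over admissible $\alpha$ gives the asserted inequality \ref{eq:lower bound on alpha for pair in prop}.

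The main obstacle I expect is making the slicing argument uniform over \emph{all} $u\in PSH(X,\omega)$ simultaneously: one must control the oscillation of $u$ transverse to $D$ and show that the restrictions $u_t$ to slices remain in a compact family of $\omega_{|D}$-psh functions with uniformly controlled normalization, rather than letting $\sup_D u_t$ drift. This is a Fubini-type argument combined with sub-mean-value inequalities for psh functions, and the care lies in keeping every constant independent of $u$; the cleanest route is probably to reduce to the log canonical threshold reformulation from the appendix, where this uniformity is packaged into the lct of the pair and one only needs the standard additivity/restriction properties of log canonical thresholds under adjunction along $D$.
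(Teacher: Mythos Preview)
Your overall architecture is right: split into a neighbourhood of $D$ (where the weight $|s|^{-2(1-\gamma)}$ is singular and the restriction to $D$ should enter) and the complement (where the weight is bounded and $\alpha(L)$ suffices). But the slicing/Fubini argument you sketch near $D$ has a real gap. For a general $u\in PSH(X,\omega)$ there is no uniform control of the transverse oscillation, and the statement that ``near $D$ one may bound $u$ above by the sum of its restriction to $D$ and a term comparable to $\log|z_1|$'' is simply not true for arbitrary $\omega$-psh $u$; the singularities of $u$ need not be concentrated along $D$. Likewise the slices $\{z_1=t\}$ for $t\neq 0$ are not $D$, so invoking $\alpha(L_{|D})$ on them requires an extra equicontinuity argument you have not supplied. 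You correctly flag this as ``the main obstacle'', but the Fubini/sub-mean-value route you propose does not actually close it.

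The paper resolves this in two moves you are missing. First, it uses the appendix (Proposition~\ref{pro:alpha inv as lct}) to reduce to \emph{algebraic} potentials $\psi=\frac{1}{m}\log|s_m|^2$ with $s_m\in H^0(X,mL)$; this is the concrete content of ``reduce to the lct reformulation'', and it is essential because now one can factor $s_m=s^{\otimes l}\otimes s'$ with $s'\in H^0((m-l)L)$ not vanishing identically on $D$. This algebraic factorisation is what replaces your vague ``bound $u$ by its restriction plus $\log|z_1|$''. Second, the passage from a neighbourhood of $D$ to $D$ itself is done via the Ohsawa--Takegoshi inequality
\[
\int_U e^{-u}\,|s|^{-2(1-\delta)}\,dV_n \;\le\; C_\delta \int_{D} e^{-u}\,dV_{n-1},
\]
which is exactly the analytic inversion-of-adjunction tool that makes $\alpha(L_{|D})$ appear; your proposal never invokes it. Once you have these two ingredients, the computation becomes an elementary exponent comparison (the condition $t<\gamma$ enters when bounding $|s|^{-2tl/m}|s|^{-2(1-\gamma)}$ by $|s|^{-2(1-\delta)}$), and the case split $l=m$ versus $l<m$ explains cleanly why $\min\{\gamma,\alpha(L),\alpha(L_{|D})\}$ is the right quantity.
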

\begin{proof}
By Proposition \ref{pro:alpha inv as lct} in the appendix it will
be enough to prove that if $s_{m}\in H^{0}(mL)$ then $-t(\frac{1}{m}\log|s_{m}|^{2})$
is locally integrable wrt $\frac{1}{|s|^{2(1-\gamma)}}dV$ for any
fixed $t$ strictly smaller than the rhs in \ref{eq:lower bound on alpha for pair in prop}.
To this end we first recall that following inequality, which is an
immediate consequence of the Ohsawa-Takegoshi extension theorem (see
Thm 2.1 in \cite{d-j} and references therein): If $u\in PSH(\Omega)$
such that $u$ is not identically $-\infty$ on the smooth connected
complex submanifold $\{s=0\}\subset\Omega\subset\C^{n}$ then, for
$\delta>0,$ 
\begin{equation}
\int_{U}e^{-u}\frac{1}{|s|^{2(1-\delta)}}dV_{n}\leq C_{\delta}\int_{\{s=0\}}e^{-u}dV_{n-1}\label{eq:inversion of adj}
\end{equation}
 on some neighborhood $U\subset\Omega$ containing $\{s=0\}$ (depending
on $u).$ Now take $s_{m}\in H^{0}(X,mL)$ and decompose$s_{m}=s^{\otimes l}\otimes s'$
where $l\leq m$ and $s'\in H^{0}((m-l)L)$ does not vanish identically
on $D:=\{s=0\}$ unless $l=m.$ In the case when $m=l$ the integral
$I_{t}$ is clearly finite as long as $t<\gamma.$ Otherwise the bound
$l/m<1$ translates to 
\[
e^{-t\frac{1}{m}\log|s_{m}|^{2}}\frac{1}{|s|^{2(1-\gamma)}}=e^{-t(\frac{l}{m}\log|s|^{2})}e^{-t(\frac{m-l}{m})\frac{1}{m-l}\log|s'|^{2}}\frac{1}{|s|^{2(1-\gamma)}}\leq
\]
 
\[
\leq e^{-t\frac{1}{m-l}\log|s'|^{2}}\frac{1}{|s|^{2(1-\delta)}}
\]
 for any fixed $t\leq\gamma-\delta.$ Since, $\frac{1}{m-l}\log|s'|^{2}$
is a psh weight on $L$ the inequality \ref{eq:inversion of adj}
gives that the function $e^{-t\frac{1}{m}\log|s_{m}|^{2}}\frac{1}{|s|^{2(1-\delta)}}$
is locally integrable in a neighborhood of $\{s=0\}$ as long as $t\leq\inf\{\gamma,\alpha(L_{|D})\}-\delta.$
Moreover, on the complement of a neighborhood of $\{s=0\}\subset X$
the factor $\frac{1}{|s|^{2(1-\gamma)}}$ is bounded and hence $e^{-t\frac{1}{m}\log|s_{m}|^{2}}\frac{1}{|s|^{2(1-\delta)}}$
is locally integrable there as long as $t<\alpha(L).$ All in all,
this means that the integral $I_{t}(\frac{1}{m}\log|s_{m}|^{2})$
is finite if $t\leq\min\{\gamma,\alpha(L),\alpha((L_{|D})\}-\delta.$
\ref{eq:lower bound on alpha for pair in prop}. 
\end{proof}
Before continuing with the proof of Theorem \ref{thm:don eq} we make
two remarks. First we note that it follows immediately from Hölder's
inequality that 
\[
\alpha(L,\mu_{(1-\gamma)D})\geq\gamma\alpha(L)
\]
But the point with the previous proposition is that it will allow
us to deduce the existence of a solution to Donaldson's equation for
$\gamma$ sufficently small without assuming that the classical alpha-invariant
is sufficently big, i.e. without assuming that $\alpha(L)>n/(n+1).$
Secondly, the lower bound in the previous proposition should be compared
with the trivial upper bound $\alpha(L,\mu_{(1-\gamma)D})\leq\min\{\gamma,\alpha(L)\}$
(just take $\psi:=\log|s|^{2}).$ In the one dimensional case when
$\psi_{\Delta}$ is defined by a divisor $\Delta$ as a (formula \ref{eq:def of divisor delta})
with $c_{i}<1$ and $V:=\deg L=1$ a slight modification of the proof
above gives 
\begin{equation}
\alpha(L,\mu_{\Delta})=\min_{i}\{\alpha(L,1-c_{i}\}=\min_{i}\{1,1-c_{i}\}\label{eq:alpha inv troy}
\end{equation}
 (this also follows from the argument in example \ref{exa:fractal m-t}
since $\exp(-t(g_{x_{0}}))$ is integrable wrt $\mu_{\Delta}$ iff
$t<\min_{i}\{1,1-c_{i}\}).$

\subsubsection{The proof of Theorem \ref{thm:don eq}}

\emph{Existence:}

By a simple rescaled version of Corollary \ref{cor:alpha for pairs gives ec}
there is a solution if 
\[
\alpha(-(K_{X}),\mu_{(1-\gamma)D})>\gamma\frac{n}{n+1}
\]
 and by the previous Proposition \ref{pro:Lower bound on alpha of pair}
this inequality is clearly satisfied if $\gamma<\Gamma:=\frac{n+1}{n}\min\left\{ \alpha(-K_{X}),\alpha((-K_{X})_{|D})\right\} .$

\emph{Uniqueness: }

According to Berndtsson's very recent generalized Bando-Mabuchi uniqueness
theorem \cite{bern} there is a unique solution of Donaldson's equation
\ref{eq:donaldsons equ} unless there is a non-trivial holomorphic
vector field $V$ on $X$ which is tangent to $D$ (formally this
is a consequence of the strict convexity in Prop \ref{pro:convexity of twisted mab},
but the problem is the non-existence of bona fida geodesics connecting
two critical points). Next, we give a direct argument (which does
not rely on the previous existence result) showing that such a $V$
does not exist. Assume to get a contradiction that $V$ as above does
exist and take $\gamma$ sufficiently small (so that $0<\gamma<\Gamma).$
As shown above $\mathcal{K}_{(1-\gamma)D}$ is proper wrt energy (since
the condition on the alpha-invariant of $(X,(1-\gamma)D)$ is satisfied).
Hence it will, to reach a contradiction, be enough to find a curve
$u_{t}$ such that $J_{\omega}(u_{t})$ tends to infinity, but $\mathcal{K}_{(1-\gamma)D}(u_{t})$
does not. To this end we let $u_{t}$ be defined by $u_{t}:=-\log(h_{t}/h)$
where $h_{0}$ is a fixed metric on $-K_{X}$ with curvature form
equal to the Kähler metric $\omega$ and $h_{t}:=F_{t}^{*}h_{0}$
where $F_{t}$ denotes the lift to $-K_{X}$ of the flow defined by
$V.$ Then $u_{t}$ satisfies the geodesic equation \ref{eq:geod equ},
where $V_{t}$ coincides with $V,$ the given holomorphic vector field
(compare \cite{d00}). Setting $J(t):=J_{\omega_{0}}(u_{t})$ a direct
calculation gives 
\[
\frac{d^{2}J(t)}{d^{2}t}=\int_{X}\partial_{t}^{2}u_{t}\frac{\omega^{n}}{n!}=\int_{X-D}|V_{t}|_{\omega_{t}}^{2}\frac{\omega^{n}}{n!}>0
\]
 if $V$ is non-trivial and hence $J(t)\rightarrow\infty$ as $|t|\rightarrow\infty.$
Finally, Prop \ref{pro:convexity of twisted mab} implies that $\mathcal{K}_{(1-\gamma)D}(u_{t})$
is affine wrt $t.$ Hence, the limit of $\mathcal{K}_{(1-\gamma)D}(u_{t})$
is bounded from above when either $t\rightarrow\infty$ or $t\rightarrow-\infty$
giving the desired contradiction.

\emph{Regularity of the curve $\gamma\mapsto\omega_{\gamma}$}

Fix $\gamma=\gamma_{0}\in]0,\Gamma].$ Since the (normalized) potential
$u_{\gamma}$ of the Kähler-Einstein current $\omega_{\gamma}$ maximizes
the functional $\mathcal{G}_{\gamma}(:=\mathcal{G}_{-\gamma,(1-\gamma)D})$
it is not hard to check that $\mathcal{G}_{\gamma_{0}}(u_{t})$ converges,
when $\gamma\rightarrow\gamma_{0},$ to the supremum of $\mathcal{G}_{\gamma_{0}}$
(this is similar to the proof of step 2 in the proof of Theorem \ref{thm:don eq})
and hence it follows, just like in the Step 2 in the proof of Cor
\ref{cor:alpha for pairs gives ec}, that any limit point in the $L^{1}-$closure
of $\{u_{\gamma}\}$ is a maximizer of $\mathcal{G}_{t_{0}}.$ By
the uniqueness in the previous point this means that $\omega_{\gamma}\rightarrow\omega_{\gamma_{0}}$
in the sense of currents. Finally, to prove the stronger continuity
it is enough to show that, for any positive integer $m,$ the partial
derivatives of $u_{\gamma}$ total order $m$ are uniformly bounded
on a given compact subset $K$ in $X-D$ with a constant which is
independent of $\gamma.$ But this follows from writing $u_{\gamma}$
as the limit of $u_{\gamma}^{(j)}(=:u_{j})$ where $u_{j}$ was defined
in the proof of the previous corollary (where higher order estimates
were obtained with constants which are clearly independent of $\gamma).$ 
\begin{rem}
In the case of a Riemann surface Cor \ref{cor:alpha for pairs gives ec}
combined with the simple identity \ref{eq:alpha inv troy} gives a
new proof of Troyanov's existence result for metrics with constant
positive curvature and conical singularities (\cite{tr}, Thm C).
Note that the proof in \cite{tr} was also variational, but our derivation
of the corresponding Moser-Trudinger inequality is new (the proof
in \cite{tr} uses weighted Sobolev inequalities).
\end{rem}
After the first version of the present paper appeared on ArXiv there
have been several important new developments concerning Kähler-Einstein
metrics with conical singularities along a divisor that we next briefly
describe, referring to the cited papers for precise statements. In
the paper \cite{do-3} Donaldson established the openness property
with respect to the strictly positive parameter $\gamma$ of solutions
to equation \ref{eq:donaldsons equ} with certain further regularity
properties (defined using weighted Hölder spaces adapted to $D)$.
Using Donaldson's result and a perturbation trick in \cite{j,ma}
Brendle \cite{br} proved the existence of Ricci flat metrics with
conical singularities along a given divisor, assuming that $\gamma\in]0,1/2].$
\footnote{Combining the arguments in \cite{do-3,br} with those in the present
paper the author then noted that $\omega_{\gamma}$ has conical singularities
for any $\gamma$ sufficiently small thus confirming Donaldson's conjecture
(see arXiv:1011.3976 {[}v3{]}). More precisely the result was shown
to hold for $\gamma$ < $\min\{\Gamma,1/2\})$ by deforming any orbifold
solution. Here we have omitted the argument as the subsequent results
\cite{jmr} permit to remove the unnatural restriction $\gamma<1/2$
(as explained above). %
} A very general existence and regularity theory for Kähler-Einstein
with conical singularities along a divisor (or in other words Kähler-Einstein
e\emph{dge metrics}) has been developed by Jeffres-Mazzeo-Rubinstein
\cite{jmr} based on the edge calculus combined with a continuity
method. In particular, in the positively curved case, the results
in \cite{jmr} say that if the twisted Mabuchi functional corresponding
to a pair $(X,(1-\gamma)D)$ is proper then there is a Kähler-Einstein
metric with appropriate cone singularities and a complete asymptotic
expansion along $D,$ only assuming that $\gamma\in]0,1]$ (we refer
to \cite{jmr} for the precise regularity statement and the corresponding
function spaces). Since we have shown that the properness does hold
for $\gamma<\Gamma$ in Donaldson's setting, i.e. when $D$ is an
anti-canonical divisor, the results in \cite{jmr} hence imply that
the solutions in $\omega_{\gamma}$ in Theorem \ref{thm:don eq} indeed
always have conical singularities and moreover admit a complete asymptotic
expansion. In another direction Campana-Guenancia-P\u{a}un \cite{cgh}
used a direct regularization argument to produce negatively curved
Kähler-Einstein metrics with cone singularities along a given klt
divisor $\Delta=\sum_{i}(1-\gamma_{i})D_{i},$ assuming $\gamma_{i}\in]0,1/2[.$

\section{Appendix: Alpha-invariants and log canonical thresholds for pairs}

In this appendix we will extend the results of Demailly in \cite{dem2}
concerning alpha-invariants of Kähler classes to a more singular setting
and in particular to the setting of klt pairs considered in section
\ref{sub:K=00003D0000E4hler-Einstein-metrics-on log}. The main point
is the simple observation that only very basic $L^{2}-$estimates,
as compared to \cite{dem2}, are needed for our purposes. 

We will assume that $[\omega]=c_{1}(L)$ for an ample line bundle
$L$ and we fix a smooth Hermitian metric $|\cdot|$ on $L$ with
positive curvature form $\omega.$ As usual, we equip $PSH(X,\omega)$
with its $L^{1}-$topology. Let $\mu$ be a finite measure on $X$
such that 

\begin{equation}
\mu(=\mu_{v}):=e^{-v}dV\label{eq:m delta in append}
\end{equation}
where $v$ is a quasi-psh function on $X,$ i.e. $v\in PSH(X,\epsilon\omega)$
for some $\epsilon>0.$ For a fixed positive number $t$ we consider
the functional 
\[
I_{t,v}(\psi):=\int e^{-t\psi}\mu_{v}
\]
 on the space $PSH(X,\omega).$ By definition 
\[
\alpha(L,\mu_{v}):=\sup\left\{ t:\, I_{t,v}\,\,\mbox{is\,\,\ bounded\,\ from\,\ above\,\ on \ensuremath{PSH(X,\omega)\cap\{\sup_{X}(\cdot)=0\}}}\right\} 
\]
Let us start be recalling the following fundamental local result from
\cite{d-j} which will allow us to replace the uniform boundedness
of $I_{t}$ with finiteness.
\begin{thm}
\label{thm:(Demailly-Kollar-).-Let}(Demailly-Kollar ). Let $K$ be
a compact subset in a domain $\Omega\subset\C^{n}$ and $u\in PSH(\Omega).$
Define $c_{K}(u)$ as the sup over all $c\geq0$ such that $e^{-cu}$
is integrable on some neighborhood of $K.$ If $u_{j}\rightarrow u$
in $L^{1}(\Omega)$ where $u_{j}\in PSH(\Omega),$ then $e^{-cu_{j}}\rightarrow e^{-cu}$
in $L^{1}$ on some neighborhood of $K$ for any $c$ such that $c<c_{K}(u).$ 
\end{thm}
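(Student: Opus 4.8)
The plan is to reduce the assertion to a single \emph{uniform integrability estimate} and then to extract the claimed $L^{1}$--convergence from it by a soft measure--theoretic argument. Since the statement is local near $K$, I would first cover $K$ by finitely many balls and thereby reduce to the case where $\Omega=B$ is a ball, $K\subset\subset B$, and (using that $\sup_{B}u_{j}$ stays bounded along an $L^{1}$--convergent sequence of psh functions, so that $e^{-cu_{j}}=e^{-c\sup_{B}u_{j}}\,e^{-c(u_{j}-\sup_{B}u_{j})}$ with bounded first factor) all the $u_{j}$ and $u$ may be taken $\le 0$ with $\sup_{B}=0$. The reduction I want is: for every $c<c_{K}(u)$ there exist a neighborhood $U$ of $K$, a constant $M$, and an index $j_{0}$ with
\[
\int_{U}e^{-cu_{j}}\,dV\le M\qquad(j\ge j_{0}).
\]
This is exactly the effective form of the Demailly--Koll\'ar semicontinuity theorem, and I would invoke it in the form established in \cite{d-j}.

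Granting that estimate, the conclusion is quick. Fix $c<c_{K}(u)$ and choose $c'$ with $c<c'<c_{K}(u)$; the estimate applied to $c'$ gives $\int_{U}e^{-c'u_{j}}\le M$ for $j$ large. Since $u_{j}\to u$ in $L^{1}(B)$, after passing to a subsequence $u_{j}\to u$ almost everywhere on $U$, hence $e^{-cu_{j}}\to e^{-cu}$ a.e.\ on $U$. The sequence $(e^{-cu_{j}})$ is bounded in $L^{c'/c}(U)$ with exponent $c'/c>1$, so H\"older's inequality yields $\int_{E}e^{-cu_{j}}\le M^{c/c'}|E|^{1-c/c'}$ for every measurable $E\subset U$; thus the family is uniformly integrable, and by Fatou the limit $e^{-cu}$ is integrable on $U$. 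Vitali's convergence theorem then gives $e^{-cu_{j}}\to e^{-cu}$ in $L^{1}(U)$ along the subsequence, and since every subsequence has a further subsequence with the same $L^{1}$--limit $e^{-cu}$, the full sequence converges in $L^{1}(U)$.

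The real work, and the main obstacle, is the uniform estimate itself; I would not expect to reprove it, but its architecture is as follows. One starts from the universal a priori decay $|\{u_{j}<-t\}\cap B'|\le C e^{-\varepsilon t}$ for some fixed $\varepsilon>0$ and $B'\subset\subset B$, valid uniformly over all psh functions with $\sup_{B}=0$ (a uniform Skoda--type estimate, obtainable from the compactness of this family together with a uniform bound on Lelong numbers). This controls the tail $\int_{\{u_{j}<-t_{0}\}}e^{-cu_{j}}$ only for $c<\varepsilon$, whereas to reach all $c<c_{K}(u)$ one must transfer the \emph{better} decay enjoyed by the limit $u$ (quantified by $c_{K}(u)$, after passing to a slightly smaller ball on which $e^{-c'u}$ is integrable) to the nearby functions $u_{j}$. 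Demailly--Koll\'ar accomplish this with $L^{2}$--methods: the Ohsawa--Takegoshi extension theorem, combined with Demailly's approximation of psh weights by logarithms of sums of squares of holomorphic functions, lets one localize the computation of the integrability exponent and compare it, via sublevel--set volume estimates of the type $|\{u_{j}<-t\}|\le|\{u<-t+s\}|+s^{-1}\|u_{j}-u\|_{L^{1}(B)}$, with that of $u$. The delicate point is matching the scales $s$ and $t$ in this comparison so that the $L^{1}$--error term does not destroy the exponential gain; it is precisely this that forces the use of the extension theorem rather than a naive truncation argument.
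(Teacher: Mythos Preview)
The paper does not prove this theorem at all: it is stated in the appendix as a ``fundamental local result'' recalled from \cite{d-j} (Demailly--Koll\'ar) and used as a black box in the proof of Corollary~\ref{cor:global cont of exp int}. So there is no ``paper's own proof'' to compare against.

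That said, your sketch is essentially correct as an outline of how one deduces the $L^{1}$--convergence statement from the core semicontinuity/uniform integrability result. The reduction to a uniform $L^{p}$ bound with $p=c'/c>1$, followed by a.e.\ convergence along a subsequence and Vitali's theorem, is the standard and correct way to upgrade uniform integrability to $L^{1}$--convergence; indeed the paper uses exactly this $L^{p}$--bound mechanism elsewhere (Lemma~\ref{lem:kont of exp integral with bd}). Your honest acknowledgment that the uniform estimate itself is the substantive input, and your description of its architecture (Ohsawa--Takegoshi, Demailly approximation, sublevel-set comparison), is accurate in spirit, though the actual argument in \cite{d-j} is organized somewhat differently and is considerably more delicate than your sketch suggests. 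Since the paper treats the theorem as a citation, your proposal goes further than what the paper does; there is nothing to correct.
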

Applying this theorem to the present global setting gives the following 
\begin{cor}
\label{cor:If-the-functional}\label{cor:global cont of exp int}If
the functional $I_{(t+\epsilon),(1+\delta)v}$ is finite on $PSH(X,\omega)$
for some $\epsilon,\delta>0$ then $I_{t,v}$ is continuous. As a
consequence, 
\begin{itemize}
\item Given $\delta>0$ and $t<\alpha(L,e^{-(1+\delta)v})$ the functional
$I_{t,v}$ is continuous on $PSH(X,\omega)$ (wrt the $L^{1}-$topology). 
\item $\alpha(L,\mu_{v}):=\sup\left\{ t:\, I_{t,v}<\infty\,\mbox{on \ensuremath{PSH(X,\omega)}}\right\} $ 
\end{itemize}
\end{cor}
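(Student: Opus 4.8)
The plan is to deduce the corollary from the Demailly--Koll\'ar semicontinuity theorem (Theorem~\ref{thm:(Demailly-Kollar-).-Let}) by a standard localization-and-compactness argument. First I would prove the main assertion: if $I_{t+\epsilon}$ is \emph{finite} on all of $PSH(X,L)$ for some $\epsilon>0$, then $I_{t}$ is continuous on $PSH(X,L)$ with respect to the $L^{1}$-topology. The key point is that on the compact space $PSH(X,L)\cap\{\sup_{X}(\cdot-\phi_{L})=0\}$ the finiteness of $I_{t+\epsilon}$ at \emph{every} point can be upgraded to a uniform bound. Indeed, suppose not; then there is a sequence $\psi_{j}$ with $I_{t+\epsilon}(\psi_{j})\to\infty$, and by compactness of $PSH(X,L)/\R$ we may assume $\psi_{j}\to\psi_{\infty}$ in $L^{1}$ with $\psi_{\infty}\in PSH(X,L)$. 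Covering $X$ by finitely many coordinate charts, writing $\mu_{\Delta}$ locally as $e^{-v}\,dV$ for a psh function $v$ (combining $\psi_{\Delta}-\phi_{\Delta}$ with local potentials of the reference forms), and applying Theorem~\ref{thm:(Demailly-Kollar-).-Let} to $u_{j}:=(t+\epsilon)(\psi_{j}-\phi_{L})+v$ (using $c_{K}(u_{\infty})>1$, which follows from $I_{t+\epsilon}(\psi_{\infty})<\infty$), one gets $e^{-(t+\epsilon)(\psi_{j}-\phi_{L})}\mu_{\Delta}\to e^{-(t+\epsilon)(\psi_{\infty}-\phi_{L})}\mu_{\Delta}$ in $L^{1}_{loc}$, hence $I_{t+\epsilon}(\psi_{j})\to I_{t+\epsilon}(\psi_{\infty})<\infty$, a contradiction. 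So $I_{t+\epsilon}\leq C$ uniformly.

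With the uniform bound $I_{t+\epsilon}\leq C$ in hand, continuity of $I_{t}$ follows cleanly. Given $\psi_{j}\to\psi$ in $L^{1}$ (normalized so that $\sup_{X}(\psi_{j}-\phi_{L})=0$), the functions $w_{j}:=e^{-t(\psi_{j}-\phi_{L})}$, viewed in $L^{p}(\mu_{\Delta})$ with $p=(t+\epsilon)/t>1$, satisfy $\|w_{j}\|_{L^{p}(\mu_{\Delta})}^{p}=I_{t+\epsilon}(\psi_{j})\leq C$, so they are uniformly $L^{p}$-bounded hence uniformly integrable. By Hartogs' lemma $\limsup_{j}\psi_{j}=\psi$ a.e.\ (using that $\mu_{\Delta}$ does not charge pluripolar sets, which holds since $\mu_{\Delta}$ is absolutely continuous with respect to $dV$), so after passing to a subsequence $w_{j}\to e^{-t(\psi-\phi_{L})}$ a.e.; uniform integrability and Vitali's convergence theorem then give $I_{t}(\psi_{j})\to I_{t}(\psi)$. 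Since the limit is independent of subsequence, the full sequence converges, proving continuity of $I_{t}$.

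The two bulleted consequences are then immediate. For the first: if $t<\alpha(L,\mu_{\Delta})$, pick $\epsilon>0$ with $t+\epsilon<\alpha(L,\mu_{\Delta})$; by definition of $\alpha$, $I_{t+\epsilon}$ is bounded above (in particular finite) on $PSH(X,L)\cap\{\sup_{X}(\cdot-\phi_{L})=0\}$, hence by $\R$-homogeneity finite on all of $PSH(X,L)$, so the main assertion applies and $I_{t}$ is continuous. For the second: the inequality $\alpha(L,\mu_{\Delta})\leq\sup\{t:I_{t}<\infty\text{ on }PSH(X,L)\}$ is trivial since boundedness implies finiteness; conversely, if $I_{t}<\infty$ everywhere then the main assertion (applied with $t$ replaced by any $t'<t$ and $\epsilon=t-t'$) shows $I_{t'}$ is continuous on the compact set $PSH(X,L)/\R$, hence bounded, so $t'\leq\alpha(L,\mu_{\Delta})$; letting $t'\uparrow t$ gives the reverse inequality.

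The main obstacle I anticipate is the passage from pointwise finiteness to the uniform bound, i.e.\ the compactness argument in the first paragraph: one must be careful that the local psh functions representing $\mu_{\Delta}$ together with the normalized weights $\psi_{j}-\phi_{L}$ genuinely form an $L^{1}$-convergent sequence of psh functions on each chart to which Theorem~\ref{thm:(Demailly-Kollar-).-Let} applies, and that the constant $c_{K}(u_{\infty})$ exceeds $1$ uniformly over a finite cover --- this uses precisely that $I_{t+\epsilon}(\psi_{\infty})<\infty$ globally. Everything else is routine functional analysis (Vitali, Hartogs, $\R$-homogeneity).
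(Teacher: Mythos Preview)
Your overall strategy is sound, but step~1 as written has a real gap. You claim that $I_{t+\epsilon}(\psi_\infty)<\infty$ implies $c_K(u_\infty)>1$ for $u_\infty=(t+\epsilon)(\psi_\infty-\phi_L)+v$. In fact it only gives $c_K(u_\infty)\geq 1$: you need $\int e^{-(1+\delta)u_\infty}<\infty$ for some $\delta>0$, i.e.\ integrability of $e^{-(1+\delta)(t+\epsilon)(\psi_\infty-\phi_L)}$ against $e^{-(1+\delta)v}$, and no hypothesis provides the extra $\delta(t+\epsilon)$ of room in the $\psi$-direction. The fix is simple: run your compactness argument at level $t+\epsilon/2$ rather than $t+\epsilon$; then $I_{t+\epsilon}(\psi_\infty)<\infty$ supplies the needed $\psi$-room (and the klt assumption on $\psi_\Delta$, implicit throughout, supplies the $\psi_\Delta$-room) to get $c_K>1$ by H\"older. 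With the uniform bound on $I_{t+\epsilon/2}$, your Vitali argument in step~2 goes through with $p=(t+\epsilon/2)/t$.

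That said, the paper's route is more direct and avoids your step~1 entirely. It applies Demailly--Koll\'ar once, locally, to the combined psh function $u_j:=\psi_j+\psi_\Delta/t$ at exponent $c=t$: since $I_{t+\epsilon}(\psi)<\infty$ for the limit $\psi$, the $\epsilon$ of room (together with the klt margin on $\psi_\Delta$) yields $c_K(\psi+\psi_\Delta/t)>t$, and D--K gives $e^{-tu_j}\to e^{-tu}$ in $L^1$ on a neighbourhood of each point; a partition of unity finishes. Your approach, by contrast, is essentially the argument of Lemma~\ref{lem:kont of exp integral with bd} (uniform $L^p$ bound plus Hartogs plus weak compactness), transplanted to this setting. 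It is a legitimate alternative and has the advantage of making the functional-analytic mechanism explicit, but it costs an extra compactness step to manufacture the uniform bound, whereas the paper spends the $\epsilon$ directly at the limit point. Your deductions of the two bullets are correct and match the paper's.
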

\begin{proof}
Take $t$ and $\epsilon,\delta>0$ such that $I_{t+\epsilon,(1+\delta)v}$
is finite on $PSH(X,\omega).$ Assume that $\psi_{j}\rightarrow\psi$
in $PSH(X,\omega)$ and normalize so that $\sup_{X}\psi=0.$ For any
fixed point $x$ with a small neighborhood $U$ we may apply the previous
theorem to $u_{j}:=\psi+v/t+C|z|^{2}$ for $C$ sufficently large
and deduce that $e^{-t\psi_{j}}e^{-v}\rightarrow e^{-t\psi}e^{-v}$
in $L^{1}(U).$ Using a partition of unity hence shows that $I_{t,v}$
is continuous on $PHS(X,\omega).$ This immediately implies the first
point in the corollary. To prove the second point we let $\alpha^{*}(L,\mu_{v})$
be defined as the rhs in the second point. Clearly, $\alpha^{*}(L,\mu_{v})\geq\alpha(L,\mu_{v})$
and by the first point and the compactness of the space $PSH(X,\omega)\cap\{\sup_{X}(\cdot)=0)$
we also have $\alpha(L,\mu_{v})\geq\alpha^{*}(L,\mu_{(1+\delta)v})$
for any $\delta>0.$ The proof is now concluded letting $\delta$
tend to zero and noting that the rhs above is continuous in $\delta,$
which follows from the fact that $\alpha^{*}(L,\mu_{\lambda v})$
is concave in $\lambda$ (I am greatful to Sebastien Boucksom for
pointing this out to me). Indeed, by Hölder's inequality, the function
$f_{\psi}(t,\lambda)=\log I_{t,\lambda v}(\psi)$ is convex in $(t,\lambda)$
and hence $\alpha_{\psi}^{*}(\lambda):=\sup\{t:\, I_{t,\lambda v}(\psi)<\infty\}$
is concave in $\lambda.$ Taking the infimum over all $\psi$ thus
shows that $\alpha^{*}(L,\mu_{\lambda v})$ is concave in $\lambda,$
as desired. \end{proof}
\begin{lem}
The functional $I_{t}$ above is finite on $PHS(X,\omega)$ iff it
is finite on the subspace of all singular weights of the form $\psi=\frac{1}{m}\log|s_{m}|^{2}$
for $s_{m}\in H^{0}(X,mL),$ where $m$ is positive integer. \end{lem}
\begin{proof}
The ``only if'' direction is trivial and hence we fix $\psi\in PSH(X,\omega).$
By replacing $\psi$ with $(1-\delta)\psi+\delta\psi_{0}$ it is enough
to prove that $I_{t}$ is finite on the space of all $\psi$ such
that $\omega_{\psi}\geq\delta\omega$ for some $\delta>0.$ The proof
of the lemma is based on the observation that one may replace the
volume form $dV_{\omega}$ used in the proof of $(iii)$ in Theorem
A.4 in \cite{dem2} with any measure $\mu$ which the following property:
for any weight $\psi$ as above 

\begin{equation}
\left\Vert s\right\Vert _{(m\psi,\mu)}^{2}:=\int_{X}|s|^{2}e^{-m\psi}\mu,\label{eq:hilbert space}
\end{equation}
 defines a Hilbert norm on the $N_{m}-$dimensional subspace \emph{$\mathcal{H}_{m}:=\left\Vert \cdot\right\Vert _{(m\psi,\mu)}^{2}<\infty$}
of $H^{0}(X,mL),$ with $N_{m}>0$ for $m$ sufficently large. To
see that this is the case for $\mu$ satisfying \ref{eq:m delta in append}
we rewrite $\left\Vert s\right\Vert _{(m\psi,\mu)}^{2}=\left\Vert s\right\Vert _{(\tilde{\psi}_{m},dV)}^{2},$
where $\tilde{\psi}_{m}=:m\psi+v.$ Since $v$ is quasi-psh we have
we that $\tilde{\psi}_{m}\in PSH(X,m\omega\epsilon/2)$ for $m$ sufficenty
large. This means that $|\cdot|^{2}e^{-\psi_{m}}$ defines a singular
Hermitian metric on $mL$ with a curvature current bounded form below
by $m\omega\epsilon/2.$ But then it follows from well-known $L^{2}-$estimates
for $\bar{\partial}$ (see \cite{dem2} and references therein for
much more precise results) that for any $m$ sufficenty large there
exists $s\in H^{0}(X,mL)$ for $m$ such that $\left\Vert s_{m}\right\Vert _{(m\psi,dV)}^{2}<\infty.$
We can now proceed exactly as in the proof of $(iii)$ in Theorem
A.4 in \cite{dem2}. Indeed, let $\psi_{m}\in PSH(X,\omega)$ be defined
by 
\[
\psi_{m}:=\frac{1}{m}\sup_{s_{m}\in H^{0}(X,mL)}\log\frac{|s_{m}|^{2}}{\left\Vert s_{m}\right\Vert _{(m\psi,\mu)}^{2}}=\frac{1}{m}\log\sum_{i=1}^{N_{m}}|s_{m}^{(i)}|^{2}
\]
 where $s_{m}^{(i)}$ is an orthonormal base for $\mathcal{H}_{m}$
and set $\alpha_{m}:=\sup\{t:\, I_{t}(\psi_{m})<\infty\}.$ Then 
\begin{equation}
1/\alpha(L,\mu)\leq1/\alpha_{m}+1/m\label{eq:proof of alpha is log}
\end{equation}
 To see this one writes $e^{-\frac{m}{p}\psi}=e^{\frac{m\psi_{m}-m\psi}{p}}e^{\frac{m\psi_{m}}{p}}$
for a fixed $p>1$ and apply Hölder's inequality with dual exponents
$(p,q)$ giving 
\[
\int e^{-\frac{m}{p}\psi}\mu\leq(\int e^{m\psi_{m}}\mu)^{1/p}(\int e^{-\frac{mq}{p}\psi_{m}})^{1/q}
\]
 By the second equality in the definition of $\psi_{m}$ above the
first factor is a constant $(=N_{m}^{1/p})$ and the second factor
is finite as long as $\frac{mq}{p}<\alpha_{m},$ i.e $(\frac{m}{p})^{-1}<\frac{1}{\alpha_{m}}+\frac{1}{m}.$
Since $p>1$ was arbitary this proves \ref{eq:proof of alpha is log}.

Now take $t$ such that $I_{t}$ is finite for all $\psi$ of the
form $\frac{1}{m}\log|s_{m}|^{2}.$ By the second equality in the
definition of $\psi_{m}$ above combined with the concavity of log
we the deduce that $I_{t}(\psi_{m})$ is finite for any $m$ sufficently
large and hence $\alpha(\psi_{m})\geq t.$ All in all this means that
$\alpha(L,\mu)\geq t(1+\epsilon_{m}),$ where $\epsilon_{m}\rightarrow0$
and hence letting $m\rightarrow\infty$ finishes the proof of the
proposition. 
\end{proof}
All in all we arrive att the following 
\begin{prop}
\label{pro:alpha inv as lct}\textup{Let $\mu$ be a measure satisfying
}\ref{eq:m delta in append}\textup{. Then the invariant $\alpha(L,\mu)$
coincides with the sup over all positive numbers $t$ such that $\int_{X}e^{-t\frac{1}{m}\log|s_{m}|^{2}}\mu$
is finite for all $s_{m}\in H^{0}(X,mL)$ and $m\in\N.$ In particular,
this is the case for the measure $\mu:=\mu_{\Delta}$ associated to
a klt divisor $\Delta$ (formula \ref{eq:measure def by klt div}). }
\end{prop}
Formulated in terms of log canonical thresholds (see \cite{dem2})
the previous proposition amounts to the identity 

\[
\alpha(L,\mu_{\Delta})=\inf_{D_{m}}\mbox{lct}_{X}(X,D_{m}+\Delta),
\]
 where $m$ is a positive integer and $D_{m}$ is the zero divisor
of some $s_{m}\in H^{0}(X,mL).$ 
\begin{rem}
All the previous results apply in the more general case when $L$
is big, i.e. $\omega$ is only assumed to be a Kähler current (just
as in \cite{dem2}). The proofs are essentially the same.\end{rem}

\end{document}